\newcommand{\mres}{\mathbin{\vrule height 1.6ex depth 0pt width
		0.13ex\vrule height 0.13ex depth 0pt width 1.3ex}}
\newcommand{\margnote}[1]{
	\ifthenelse{\boolean{shownotes}}%
	{\marginpar{\raggedright\tiny\texttt{#1}}}%
	{}%
}
\newcommand{\hole}[1]{
	\ifthenelse{\boolean{shownotes}}%
	{\begin{center} \fbox{ \rule {.25cm}{0cm}
				\rule[-.1cm]{0cm}{.4cm} \parbox{.85\textwidth}{\begin{center}
						\texttt{#1}\end{center}} \rule {.25cm}{0cm}}\end{center}}
	{}
}
\theoremstyle{plain} \newtheorem{Theorem}{Theorem}
\theoremstyle{plain} 
\theoremstyle{plain} \newtheorem{Lemma}{Lemma}
\theoremstyle{plain} \newtheorem{Proposition}{Proposition}
\theoremstyle{definition} \newtheorem{Definition}{Definition}
\theoremstyle{definition}\newtheorem{Remark}{Remark}
\theoremstyle{definition}
\theoremstyle{definition} 
\newcommand{\eps}{\varepsilon}
\newcommand{\al}{\alpha}
\newcommand{\vsp}[1]{\vspace{0.5cm}\par}
\newcommand{\baf}{\bar{F}}
\newcommand{\bai}{\bar{\eta}}
\numberwithin{equation}{section}
\newcommand{\ra}{\rangle}
\newcommand{\la}{\langle}
\newcommand{\lf}{\lambda_{F}}
\newcommand{\lv}{\lambda_{v}}
\newcommand{\lvi}{\lambda_{v_i}}
\newcommand{\li}{\lambda_{\eta}}
\newcommand{\bn}{{\nu}}
\newcommand{\bg}{\gamma}
\begin{document}
	\title[Quasiconvex thermoelasticity]{Weak-strong uniqueness for measure-valued solutions to the equations of quasiconvex adiabatic thermoelasticity}

	\author[M. Galanopoulou]{Myrto Galanopoulou}
	\address[Myrto Galanopoulou]{\newline
		Department of Mathematics, Heriot-Watt University\\ 
		Edinburgh, Scotland, UK}
	\email[]{\href{m.galanopoulou@hw.ac.uk}{m.galanopoulou@hw.ac.uk}}

	\author[A. Vikelis]{Andreas Vikelis}
	\address[Andreas Vikelis]{\newline
		Department of Mathematics, University of Sussex\\ Pevensey 2 Building,
		Falmer,
		Brighton, BN1 9QH, UK}
	\email[]{\href{a.vikelis@sussex.ac.uk}{a.vikelis@sussex.ac.uk}}
	
	\author[K. Koumatos]{Konstantinos Koumatos}
	\address[Konstantinos Koumatos]{\newline
		Department of Mathematics, University of Sussex\\ Pevensey 2 Building,
		Falmer,
		Brighton, BN1 9QH, UK}
	\email[]{\href{k.koumatos@sussex.ac.uk}{k.koumatos@sussex.ac.uk}}
	
	\begin{abstract}
		This article studies the equations of adiabatic thermoelasticity endowed with an internal energy satisfying an appropriate quasiconvexity assumption which is associated to the symmetrisability condition for the system. A G\aa rding-type inequality for these quasiconvex functions is proved and used to establish a weak-strong uniqueness result for a class of dissipative measure-valued solutions. 
		\vspace{0.1cm}
		
		\noindent \textsc{Keywords}: thermoelasticity, weak vs strong uniqueness, measure-valued solutions, quasiconvexity, G\aa rding inequalities\vspace{0.1cm}
		
		\noindent {MSC2010}: 35L65, 35Q74
	\end{abstract}
	
	\maketitle

	\section{Introduction}
	In this article, we consider the system of adiabatic thermoelasticity in Lagrangian coordinates given by 
	\begin{align}
		\begin{split}
			\label{e:adiabatic_thermoelasticity}
			\partial_t F_{i\alpha}-\partial_{\alpha}v_i&=0 \\ 
			\partial_t v_i-\partial_{\alpha}\Sigma_{i\alpha}&=0\\
			\partial_t\left(\frac{1}{2}|v|^2+e\right)-\partial_{\alpha}(\Sigma_{i\alpha}v_i)&=r ,
		\end{split}
	\end{align}
	that describes the evolution of a thermomechanical process $\big ( y(t,x) , \eta(t,x) \big) \in \mathbb{R}^3\times\mathbb{R}^+$  
	where the time variable $t\in\mathbb{R}^+$ and the spatial variable $x\in\mathbb{R}^3$. This is a first-order system and a solution to
	(\ref{e:adiabatic_thermoelasticity}),  consists of the deformation gradient $F = \nabla y  \in \mathbb{M}^{3\times3}$, the velocity $v = 
	\partial_t y \in\mathbb{R}^3$ and the specific entropy $\eta$. The first equation is a compatibility relation between the partial derivatives 
	of the motion, the second describes the balance of linear momentum, while the third equation stands for the balance of energy.	One 
	must include to \eqref{e:adiabatic_thermoelasticity} the constraint 
	\begin{equation}
		\label{constraint}
		\partial_{\alpha}F_{i\beta}=\partial_{\beta}F_{i\alpha}, \qquad i,\alpha,\beta=1,2,3 \, ,
	\end{equation}
	which guarantees that $F$ is  a  deformation gradient associated to the motion $y(t,x)$. We note that relation \eqref{constraint} is an involution, namely, it is  propagated from the initial data to the solution via \eqref{e:adiabatic_thermoelasticity}$_1$.
	
	The remaining variables in (\ref{e:adiabatic_thermoelasticity}) are the Piola-Kirchhoff stress  $\Sigma_{i\alpha }$, the internal energy $e$, 
	and the radiative heat supply $r$.  Here, the referential heat flux $Q_\alpha = 0$ as this theory describes adiabatic processes,  and it 
	does not appear in the equations \eqref{e:adiabatic_thermoelasticity}. For simplicity we have normalised the reference density 
	$\rho_0 =1$. The balance of entropy holds identically as an equality for strong solutions, that is
	\begin{equation}
		\label{e:entropy.prod}
		\partial_t \eta=\frac{r}{\theta(F,\eta)}
	\end{equation}
	and it can be derived from system \eqref{e:adiabatic_thermoelasticity}.
	By contrast, for weak solutions, \eqref{e:entropy.prod} is replaced by the Clausius-Duhem inequality \cite{CN1963,TN1992,MR3468916}, according to the 
	second law of thermodynamics,  and it serves as a criterion of admissibility for thermodynamic processes that satisfy the balance laws of mass, momentum and energy. The system is closed through constitutive relations which, 
	for smooth processes, are consistent with the Clausius-Duhem inequality and describe the material response. For thermoelastic 
	materials under adiabatic conditions,  the constitutive theory is determined from the thermodynamic potential of the internal energy 
	depending solely on the deformation gradient $F$ and the entropy $\eta,$ via the relations
	\begin{align}
		\label{e:constitutive_functions}
		e=e(F,\eta),\quad
		\Sigma=\frac{\partial e}{\partial F},\quad
		\theta=\frac{\partial e}{\partial\eta}
	\end{align}
	for the stress $\Sigma$ and the temperature $\theta$.  We refer the reader to \cite{CN1963,TN1992} for a detailed derivation of adiabatic thermoelasticity and its relation to other constitutive theories.
	
	\textcolor{black}{
		System \eqref{e:adiabatic_thermoelasticity} belongs to a general class of
		hyperbolic problems 
		that are symmetrisable in the sense of Friedrichs and Lax~\cite{MR285799},
		under appropriate hypotheses. It turns out that symmetrisability is guaranteed by the posivitivity of the matrix
		\begin{equation*}
			\frac{1}{e_{\eta}}
			\begin{pmatrix}
				e_{FF} & 0 & e_{F \eta}
				\\
				0 &  1 & 0
				\\
				e_{F \eta} & 0 & e_{\eta \eta}
			\end{pmatrix}
		\end{equation*}
		which in turn amounts to $e(F, \eta)$ being strongly convex and $\theta(F,\eta)=\frac{\partial  e (F,\eta)}{\partial  \eta} > 0$. We refer
		the reader to Appendix B for a discussion on the connection of thermoelasticity to the general theory of conservation laws for symmetrisable systems.}
	
	Convexity of $e(F,\eta)$ suffices to apply the standard 
	theory of conservation laws to \eqref{e:adiabatic_thermoelasticity}, however, the condition of convexity is too restrictive to encompass a large class of materials.
	A broader notion of convexity is polyconvexity, that is $e(F,\eta) = g(F, {\rm cof}\,F, \det F, \eta)$ for some convex function $g$. For polyconvex energies stability and weak-strong uniqueness results for system \eqref{e:adiabatic_thermoelasticity} have been obtained in \cite{CGT2017,CGT2018,CGT2020}. Note that due to the presence of involutions \eqref{constraint}, the positivity of the matrix appearing in \eqref{hyperbolic(F,eta)} is indeed only required on the cone 
	\[
	\left\{(a\otimes n, v, \eta) : a,\,n,\,v\in\mathbb{R}^3,\,\eta\in\mathbb{R}\right\}
	\]
	amounting to a notion of rank-one convexity for $e(F,\eta)$. \textcolor{black}{ Nevertheless, as it was proved by Dafermos in \cite{Daf}, weak-strong uniqueness for hyperbolic systems of conservation laws with entropies which are convex on the wave cone, can be established under an extra assumption of small local oscilations on the weak solutions. Motivated by recent work in the isothermal problem of elasticity \cite{KS2017}, and extensions to more general PDE constrained conservation laws \cite{KV2020}, we prove a weak-strong uniqueness result for a suitable class of dissipative measure-valued solutions to \eqref{e:adiabatic_thermoelasticity} without the assumption of small oscillations, whenever the internal energy $e$ satisfies a natural quasiconvexity assumption associated to the above cone, i.e.
		\begin{align}\label{eq:qscintro}
			\int_\Omega e(\lambda_1+\nabla\phi,\lambda_2+\psi)-e(\lambda_1,\lambda_2)-e_\eta(\lambda_1,\lambda_2)\psi\gtrsim\int_\Omega|V_p(\nabla\phi)|^2+|V_q(\psi)|^2,
		\end{align}
		where $\phi\in W^{1,p}_0(\Omega)$ and $\psi\in L^q(\Omega)$; see \eqref{eq:aux} and Definition \ref{defqcc} for more details. We note that the quasiconvexity condition \eqref{eq:qscintro} is in a sense equivalent to the classical notion of $({\rm curl},0)$-quasiconvexity, see Remark \ref{remark:Aqc}, and so it arises naturally due to the involutions \eqref{constraint}. As a consequence, our definition is more general than polyconvexity. }
	
	\textcolor{black}{ In the spirit of \cite{KS2017} and \cite{KV2020}, and in order to establish weak-strong uniqueness, we consider the relative quantity
		\begin{align*}
			e(z_1,z_2|\bar{z}_1,\bar{z}_2)=e(z_1,z_2)-e(\bar{z}_1,\bar{z}_2)-e_F(\bar{z}_1,\bar{z}_2)\cdot(z_1-\bar{z}_1)-e_\eta(\bar{z}_1,\bar{z}_2)\cdot(z_2-\bar{z}_2)
		\end{align*}
		and for quasiconvex $e$ we prove a G\"{a}rding type inequality, see Theorem \ref{eq:mmgarding},
		\begin{align}\label{intro:mmgarding}
			\int\Big(|V_p(\nabla\phi)|^2+|V_q(\psi)|^2\Big) 
			\lesssim \int e(\baf+\nabla\phi,\bai+\psi|\baf,\bai)+ \int|V_p(\phi)|^2.
		\end{align}
		In comparison with \cite{KV2020}, in our case, the fact that part of the solution remains unconstrained allows us to disregard the corresponding lower order terms in \eqref{intro:mmgarding}, and only retain lower order terms associated with the constrained part. The work presented in this paper borrows newly developed techniques from the calculus of variations \cite{Acerbi1987, Zhang1992, JC2017, JCK2020} and constitutes the first result for the hyperbolic system of thermoelasticity under quasiconvexity conditions.}
	
	The article is organised as follows: in Section 2, we introduce the appropriate notions of quasiconvexity and measure-valued solutions for system \eqref{e:adiabatic_thermoelasticity}. In Section 3, we present the relative entropy inequality which is applied in Section 4 to prove measure-valued vs strong uniqueness for the system of quasiconvex adiabatic thermoelasticity. The proof requires a G\aa rding-type inequality for these quasiconvex functions which is the content of Section 6, while Section 5 establishes some necessary localisation (in time) results for these measure-valued solutions. \textcolor{black}{Finally, Appendix A contains some auxiliary results, while Appendix B discusses how system \eqref{e:adiabatic_thermoelasticity} fits into the general theory of symmetrisable systems of conservation laws.}

	\section{Preliminaries}
	\subsection{Quasiconvexity:} Let $Q$ be (any translation of) the unit cube $(0,1)^d$ with opposite boundaries glued or simply the $d$-dimensional torus $\mathbb{T}^d$. In the sequel, since we often consider functions defined on cubes which are seen as subsets of the unit cube $Q$, we prefer to define our integrals and functional spaces over $Q$ with opposite sides identified. That is, we write $L^p(Q)$ instead of $L^p(\mathbb{T}^d)$ but also $C^k(Q)$ instead of $C^k(\mathbb{T}^d)$, i.e.
	\begin{equation*}\label{eq:QvT}
		C^k({Q}):=\left\{\psi\in C^k(\mathbb{R}^d) : \partial^\alpha \psi \text{ $\Delta$-periodic for all $d$-multi-indices $|\alpha|\leq k$} \right\}
	\end{equation*}
	where $\Delta$ denotes the unit cell of the lattice $\mathbb{Z}^d$.
	
	In the spirit of \cite{KS2017}, \cite{KV2020}, we replace convexity of the free energy by a notion of quasiconvexity which we define below, Definition \ref{defqcc}. The system \eqref{e:adiabatic_thermoelasticity} together with \eqref{hyperbolic(F,eta)} indicate that our notion of convexity should at least imply convexity on the directions of the wave cone 
	\begin{equation*}
		\Lambda:=\left\{(a\otimes n, \eta) : a,\,n,\in\mathbb{R}^3,\,\eta\in\mathbb{R}\right\}.
	\end{equation*}
	
	To this end, we say that a continuous function $e$ is quasiconvex at $(\lambda_1,\lambda_2)\in\mathbb{R}^{d\times d}\times\mathbb{R}$ if the inequality
	\begin{equation*}
		\int_Q e(\lambda_1+\nabla\phi(x),\lambda_2+\psi(x))-e(\lambda_1,\lambda_2)-e_\eta(\lambda_1,\lambda_2)\psi(x) dx\geq 0,
	\end{equation*}
	holds for all $\phi\in W^{1,\infty}(Q)$ and $\psi\in L^\infty(Q)$. The above definition can equivalently be expressed over arbitrary domains. In particular, for $\Omega\subseteq\mathbb{R}^d$ a non-empty open subset with $|\partial\Omega| = 0$, $e$ is quasiconvex at $(\lambda_1,\lambda_2)\in\mathbb{R}^{d\times d}\times\mathbb{R}$ if the inequality
	\begin{equation*}
		\int_\Omega e(\lambda_1+\nabla\phi(x),\lambda_2+\psi(x))-e(\lambda_1,\lambda_2)-e_\eta(\lambda_1,\lambda_2)\psi(x)dx\geq 0,
	\end{equation*}
	holds for all $\phi\in W^{1,\infty}_0(\Omega)$ and $\psi\in L^\infty(\Omega)$.
	
	\vspace{0.2cm}
	\noindent Henceforth, we assume that $e$ has $(p,q)$-growth, i.e. $|e(z_1,z_2)|\leq c(1+|z_1|^p+|z_2|^q)$, and then by density we can also express the above definition with test functions in $W^{1,p}(Q)$ and $L^q(Q)$ or in $W^{1,p}_0(\Omega)$ and $L^q(\Omega)$ respectively. 
	
	\vspace{0.2cm}
	The results presented in this paper require a strengthened version of quasiconvexity which we now introduce. Let $p, q\geq 2$ and  define the auxiliary function $V_i:\mathbb{R}^k\to\mathbb{R}$ by
	\begin{equation}\label{eq:aux}
		V_i(z):=(|z|^i+|z|^2)^{1/2},
	\end{equation}
	where $k=1,\,k=d$ or $k=d\times d$ and $i\in\mathbb{N}$. 
	\begin{Definition}\label{defqcc}
		Let $\Omega\subseteq\mathbb{R}^d$ be a non-empty open subset with $|\partial\Omega| = 0$. A continuous function $e$ is strongly quasiconvex at $(\lambda_1,\lambda_2)\in\mathbb{R}^{d\times d}\times\mathbb{R}$ if the inequality
		\begin{equation*}
			\int_\Omega e(\lambda_1+\nabla\phi,\lambda_2+\psi)-e(\lambda_1,\lambda_2)-e_\eta(\lambda_1,\lambda_2)\psi\geq c_0\int_\Omega|V_p(\nabla\phi)|^2+|V_q(\psi)|^2,
		\end{equation*}
		holds for all $\phi\in W^{1,p}_0(\Omega)$ and $\psi\in L^q(\Omega)$. Equivalently, $e$ is strongly quasiconvex at $(\lambda_1,\lambda_2)\in\mathbb{R}^{d\times d}\times\mathbb{R}$ if 
		\begin{equation*}
			\int_Q e(\lambda_1+\nabla\phi,\lambda_2+\psi)-e(\lambda_1,\lambda_2)-e_\eta(\lambda_1,\lambda_2)\psi\geq c_0\int_Q|V_p(\nabla\phi)|^2+|V_q(\psi)|^2,
		\end{equation*}
		for all $\phi\in W^{1,p}(Q)$ and $\psi\in L^q(Q)$.  In addition, we say that $e$ is (strongly) quasiconvex if it is (strongly) quasiconvex at $(\lambda_1,\lambda_2)$ for all $(\lambda_1,\lambda_2)\in\mathbb{R}^{d\times d}\times\mathbb{R}$.
	\end{Definition}
	\begin{Remark}\label{remark:Aqc}
		At first sight, our definition of quasiconvexity, Definition \ref{defqcc}, differs from the classical definition of $({\rm curl},0)$-quasiconvexity associated with the cone $\Lambda$, see \cite{FM1999} for more details. We remark that, with respect to the latter definition, $e$ is strongly $({\rm curl},0)$-quasiconvex if 
		\begin{equation}\label{eq:Aqc}
			\int_\Omega e(\lambda_1+\nabla\phi,\lambda_2+\psi)-e(\lambda_1,\lambda_2)dx\geq c_0\int_\Omega|V_p(\nabla\phi)|^2+|V_q(\psi)|^2dx,
		\end{equation}
		for all $\phi\in W^{1,p}_0(\Omega)$ and $\psi\in L^q(\Omega)$ with $\psi|_\Omega:=\int_\Omega\psi=0$. In fact, the two definitions are in a sense equivalent. In particular, if $e$ is  quasiconvex for all $(\lambda_1,\lambda_2)\in\mathbb{R}^{d\times d}\times\mathbb{R}$ then it is also strongly $({\rm curl},0)$-quasiconvex up to a possibly different constant $c_0$ (the reverse is obvious). Indeed, write
		\begin{align*}
			&e(\lambda_1+\nabla\phi,\lambda_2+\psi)-e(\lambda_1,\lambda_2)-e_\eta(\lambda_1,\lambda_2)\psi \\
			&=e(\lambda_1+\nabla\phi,\lambda_2+\psi|_\Omega+\psi-\psi|_\Omega)
			-e(\lambda_1,\lambda_2+\psi|_\Omega)\\
			&+e(\lambda_1,\lambda_2+\psi|_\Omega)-e(\lambda_1,\lambda_2)-e_\eta(\lambda_1,\lambda_2)\psi|_\Omega-e_\eta(\lambda_1,\lambda_2)\psi+e_\eta(\lambda_1,\lambda_2)\psi|_\Omega,
		\end{align*}
		and then use the inequality \eqref{eq:Aqc} together with the fact that, for all $(\lambda_1,\lambda_2)\in\mathbb{R}^{d\times d}\times\mathbb{R}$ and $\xi\in\mathbb{R}$, it holds that
		\[
		e(\lambda_1,\lambda_2+\xi)-e(\lambda_1,\lambda_2)-e_\eta(\lambda_1,\lambda_2)\xi\geq c_0|V_q(\xi)|^2.
		\]
		The latter arises due to the convexity of $e$ in $\lambda_2$, according to Definition \ref{defqcc}.
	\end{Remark}
	
	\vspace{0.2cm}
	
	\subsection{Dissipative measure-valued solutions:}
	For $p\geq 0$, let $C_p(\mathbb{R}^d)$ denote the space of continuous functions such that
	\[C_p(\mathbb{R}^d):=\left\{g\in C(\mathbb{R}^d):\lim_{|\lambda|\to\infty}\frac{g(\lambda)}{|\lambda|^p}=0\right\}\]
	while the space $C_0(\mathbb{R}^d)$ is defined as
	\[C_0(\mathbb{R}^d):=\left\{g\in C(\mathbb{R}^d):\lim_{|\lambda|\to\infty} g(\lambda)=0\right\}.\]
	Identifying the space of signed Radon measures $\mathcal{M}(\mathbb{R}^d)$ equipped with the total variation norm as 
	isometrically isomoprhic to the dual of $C_0(\mathbb{R}^d)$, a (parametrised) Young measure $\nu=(\nu_x)_{x\in Q}$ is an element of the space 
	$L^{\infty}_{w*}(Q,\mathcal{M}(\mathbb{R}^d))$ taking values in the space of probability measures. The space 
	$L^{\infty}_{w*}(Q,\mathcal{M}(\mathbb{R}^d))$ consists of all weak-$*$ measurable, essentially bounded maps 
	$\nu:Q\ni x\mapsto\nu_x\in\mathcal{M}(\mathbb{R}^d)$, i.e. all maps such that 
	\[x\mapsto \la\nu,g\ra:=\int g(\lambda)\:\nu(d\lambda)\] is measurable for all $g\in C_0(\mathbb{R}^d)$ and
	\[\sup_{x\in Q}\|\nu_x\|_{\mathcal{M}(\mathbb{R}^d)}<\infty.\]
	Since $C_0(\mathbb{R}^d)$ is separable, we have
	\[L^{\infty}_{w*}(Q,\mathcal{M}(\mathbb{R}^d))=L^1(Q,C_0(\mathbb{R}^d))^{\ast}\] and this defines the weak-$\ast$ limits of
	Young measures. The Fundamental Theorem of Young measures in $L^p$ states that given a bounded sequence $(U_n)$ in
	$L^p(Q)$ $(1\leq p<\infty),$ there exists a subsequence and a parametrized family of Young measures $\nu=(\nu_x)_{x\in Q}$
	such that 
	\begin{align}
		\label{g_weaklim_definition}
		g(U_n) \rightharpoonup\la\nu_x,g\ra \quad\text{in}\; L^1(Q), \; \forall \; g\in C_p(\mathbb{R}^d),
	\end{align}
	and we say that the sequence $(U_n)$ generates the Young measure $\nu.$ We call $\nu$ a \textit{$p-$Young measure} since it is 
	generated by a bounded sequence in $L^p.$ In fact, the sequence $\left(g(U_n)\right)$ converges as in \eqref{g_weaklim_definition} whenever it is 
	equiintegrable and the barycentre $\la\nu_x,id\ra$ of the generated Young measure gives the weak limit of the sequence $U_n$, i.e.
	\begin{align*}
		U_n \rightharpoonup\la\nu_x,id\ra \quad\text{in}\; L^p(Q).
	\end{align*}
	If $U_n=\nabla u_n$ for $u_n\in W^{1,p}(Q),$ then we call $\nu$ a \textit{gradient $p-$Young measure}.
	Below, we wish to consider generating sequences $(U_n)\subset L^{\infty}(0,T;L^p(Q))$, defined both in time and space, for some $T>0$. Then, the associated Young 
	measure $\nu_{t,x}$, with $(t,x)\in Q_T := (0,T)\times Q$, satisfies 
	\[\sup_{0\leq t\leq T}\int \la\nu_{t,x},|\cdot|^p\ra<\infty.\]
	In our context, we naturally consider measure-valued solutions as limits of approximations that satisfy the uniform bound
	\begin{align}
		\label{energy_unif_bound}
		\sup_{0\leq t\leq T} \int_{\mathbb{T}^d} e(F^\varepsilon,\eta^\varepsilon)+\frac{1}{2}|v^\varepsilon|^2\:dx\leq C,
	\end{align}
	coming by integrating in $Q_T$ the energy conservation equation (\ref{e:adiabatic_thermoelasticity})$_3,$ given that the radiative heat supply $r$ is
	bounded in $L^1(Q_T)$ and that the initial data have bounded energy.
	Assuming the growth condition on the energy: 
	\begin{align}
		\label{H2}
		c(|F|^p+|\eta|^q-1)\leq e(F,\eta)\leq c(|F|^p+|\eta|^q+1),
	\end{align}
	then growth \eqref{H2} together with \eqref{energy_unif_bound}, implies the following uniform in $\varepsilon$ bounds:
	\begin{equation}
		\label{Fvtheta.reg}
		F^{\varepsilon}\in L^{\infty}(0,T;L^p(Q)) ,\quad v^{\varepsilon}\in L^{\infty}(0,T;L^2(Q)), \quad \eta^{\varepsilon}\in L^{\infty}(0,T;L^q(Q)).
	\end{equation}
	Then, the sequence $\{(F^\varepsilon,v^\varepsilon,\eta^\varepsilon)\}$ generates a family of probability measures
	$$\nu_{t,x}\in\mathcal{P}(\mathbb{M}^{3\times 3}\times\mathbb{R}^3\times\mathbb{R})$$
	given by the mapping $(\nu_{t,x}):Q_{T}\ni(t,x)\mapsto\nu_{t,x}.$ The Young measure $(\nu_{t,x})$ is an element of the space
	$L^{\infty}_{w*}(Q_T,\mathcal{M}(\mathbb{R}^{13}))$ representing weak limits of the form
	\begin{align}
		\label{psi_weaklim_definition}
		\text{wk-$\ast$-}\lim_{\varepsilon\to 0}\psi(F^\varepsilon,v^\varepsilon,\eta^\varepsilon)
		=\la\nu_{t,x},\psi(\lf,\lv,\li)\ra,
	\end{align}
	for all continuous functions $\psi=\psi(\lf,\lv,\li)$ such that
	\begin{align}
		\label{psi_weaklim_growth}
		\lim_{|\lf|^p+|\lv|^2+|\li|^q\to\infty}
		\frac{|\psi(\lf,\lv,\li)|}{|\lf|^p+|\lv|^2+|\li|^q}=0,
	\end{align}
	where in \eqref{psi_weaklim_definition} the notation $\la\nu_{t,x},\cdot\ra$ stands for the average
	$$\la\nu_{t,x},\psi(\lf,\lv,\li)\ra=\int\psi(\lf,\lv,\li)\:\nu_{t,x}(d\lf,d\lv,d\li)$$ 
	and $\lf\in\mathbb{M}^{3\times 3},$ $\lv\in\mathbb{R}^3,$ $\li\in\mathbb{R}.$ The marginal of $\nu_{t,x}$ generated by 
	$(F^{\varepsilon})_{\varepsilon}=(\nabla y^{\varepsilon})_{\varepsilon}$ is a gradient $p-$Young measure, while the marginals generated 
	by $(v^\varepsilon)$ and $(\eta^{\varepsilon})$ are a $2$- and a $q$-Young measure respectively. In particular,
	\begin{align*}
		F^{\varepsilon}\overset{\ast}{\rightharpoonup}\la\nu_{t,x},\lambda_F\ra=: F   \quad\text{weak-$\ast$ in\:} L^{\infty}(0,T;L^p(Q)) \;,\\
		v^{\varepsilon}\overset{\ast}{\rightharpoonup}\la\nu_{t,x},\lambda_v\ra =: v  \quad\text{weak-$\ast$ in\:} L^{\infty}(0,T;L^2(Q)) \;,\\
		\eta^{\varepsilon}\overset{\ast}{\rightharpoonup}\la\nu_{t,x},\lambda_{\eta}\ra =: \eta \quad\text{weak-$\ast$ in\:} L^{\infty}(0,T;L^q(Q)) \, .
	\end{align*}
	We note that the space $C_p(\mathbb{R}^d)$ is separable 
	equipped with the norm $\|g(\cdot)/(1+|\cdot|^p)\|_{L^{\infty}},$ and so is the space $C_{p,q}(\mathbb{R}^d\times\mathbb{R})$ defined as
	\[C_{p,q}(\mathbb{R}^d\times\mathbb{R}):=\left\{g\in C(\mathbb{R}^d\times\mathbb{R}):
	\lim_{|\lambda_1|^p+|\lambda_2|^q\to\infty}\frac{g(\lambda_1,\lambda_2)}{|\lambda_1|^p+|\lambda_2|^q}=0\right\},\]
	equipped with the norm $\|g(\cdot)/(1+|\cdot|^p+|\cdot|^q)\|_{L^{\infty}}.$ As a result, the internal energy function $e(\lambda_{F},\li)$ 
	belongs to the separable space $C_{p,q}(\mathbb{R}^9\times\mathbb{R})$ ($\mathbb{M}^{3\times3} \simeq \mathbb{R}^9$) under the 
	aformentioned norm. 
	%
	
	To take into account the formation of concentration effects, we introduce the concentration measure $\bg$, that depends on the total energy. This is a well-defined nonnegative Radon measure for a subsequence of 
	$$ \frac{1}{2}|v^\varepsilon|^2+e(F^\varepsilon,\eta^\varepsilon).$$  Since we know that the functions $(F^{\varepsilon},v^{\varepsilon},\eta^{\varepsilon})$ are all bounded in some $L^p$ space -because of \eqref{Fvtheta.reg}- we may 
	apply the generalized Young measure Theorem \cite{dm87,ab97}, in order to pass to the limit, since the only 
	assumption required is $L^1$ boundedness. Indeed, letting $\Omega$ be an open subset of $\mathbb{R}^d$, the theorem asserts that given a 
	sequence of functions $(u_n),$ $u_n:\Omega\to\mathbb{R}^m,$ bounded in $L^p(\Omega),$ ($p\geq 1$) there exists a 
	subsequence (which we will not relabel), a parametrized family of probability measures 
	$\nu\in L^{\infty}_{w*}(\Omega;\mathcal{P}(\mathbb{R}^m)),$ a nonnegative measure 
	${\mu}\in\mathcal{M}^+(\Omega)$ and a parametrized probability measure on a 
	sphere $\nu^{\infty}\in L^{\infty}_{w*}((\Omega,{\mu});\mathcal{P}(S^{m-1}))$ such that
	\begin{align}
		\label{ab97_weaklimits}
		\psi(x,u_n)\:dx\rightharpoonup\int_{\mathbb{R}^m}\psi(x,\lambda)\:d{\nu}dx
		+ \int_{S^{m-1}}\psi^{\infty}(x,z)\:{\nu}^{\infty}(dz) d{\mu}
		\quad\text{weakly-$\ast$,}
	\end{align}
	for all $\psi$ continuous with well-defined recession function 
	$$\psi^{\infty}(x,z):=\lim_{\substack{s\to\infty\\z'\to z}}\frac{\psi(x,sz')}{s^p}.$$ 
	
	The sequences $(F^{\varepsilon},v^{\varepsilon},\eta^{\varepsilon})$ are bounded in different spaces and have different growth, 
	and as a result, we need to apply a refinement of the aforementioned theorem as, for instance, in \cite{GSW2015}: 
	consider a sequence of maps $u_n=(u^1_n,u^2_n)$ where $(u^1_n)$ is bounded in some 
	$L^p(\Omega;\mathbb{R}^b)$ and $(u^2_n)$ is bounded in $L^q(\Omega;\mathbb{R}^l)$ and define the 
	non-homogeneous unit sphere 
	$$S^{b+l-1}_{pq}:=\{(\beta_1,\beta_2)\in\mathbb{R}^{b+l}:|\beta_1|^{p}+|\beta_2|^{q}=1\}\,,$$
	for exponents $p, q > 1.$
	Then one can pass to the limit as in \eqref{ab97_weaklimits} where 
	\[\begin{aligned}
		\psi^{\infty}(x,z) 
		&:=\lim_{\substack{x'\to x\\s\to\infty\\(\beta_1',\beta_2')\to (\beta_1,\beta_2)}}
		\frac{\psi(x',s^q\beta_1',s^p\beta_2')}{s^{pq}}
		= \lim_{\substack{x'\to x\\ \tau \to\infty\\(\beta_1',\beta_2')\to (\beta_1,\beta_2)}}
		\frac{\psi(x', \tau^{\frac{1}{p}} \beta_1', \tau^{\frac{1}{q}} \beta_2')}{\tau}.
	\end{aligned}\]
	We define the generalized sphere
	$$
	S^{12} = \{ (F,v,\eta) \in \mathbb{R}^{13} : |F|^p + |v|^2 + |\eta|^q = 1 \} \, .
	$$
	The form of the recession function for the energy follows from \cite[Thm 2.5]{ab97} and reads
	\begin{align*}
		\left(\frac{1}{2}|v|^2+e(F,\eta)\right)^{\infty}=\lim_{\tau \to\infty}
		\left(\frac{1}{2}|v|^2+\frac{e( \tau^{\frac{1}{p}} F ,  \tau^{\frac{1}{q}} \eta)}{\tau}\right),
	\end{align*}
	and we require it to be continuous on $S^{12}$.  Then, along a subsequence in $\varepsilon$,
	\begin{align*}
		\frac{1}{2}|v^{\varepsilon}|^2&+e(F^\varepsilon,\eta^\varepsilon) \overset{\ast}{\rightharpoonup}\\
		&\overset{\ast}{\rightharpoonup}
		\left\la\nu_{t,x},\frac{1}{2}|\lv|^2+e(\lf,\li)\right\ra \, dx
		+ \left\la\nu^{\infty},\left(\frac{1}{2}|\lv|^2+e(\lf,\li)\right)^{\infty}\right\ra{\mu}
	\end{align*}
	weak-$\ast$ in the sense of measures, where $\nu\in\mathcal{P}(Q_{T};\mathbb{R}^{13}),$ 
	$\nu^{\infty}\in\mathcal{P}((Q_{T},{\mu});S^{12})$ and ${\mu}\in\mathcal{M}^+(Q_T).$ 
	Then \eqref{H2} implies $$\left(\frac{1}{2}|\lv|^2+e(\lf,\li)\right)^{\infty}>0$$
	so that the concentration measure $\gamma \in \mathcal{M}^+(Q_T)$ is nonnegative, i.e.
	\begin{align}
		\label{concmeas}
		\bg:=\left\la\nu^{\infty},\left(\frac{1}{2}|\lv|^2+e(\lf,\li)\right)^{\infty}\right\ra{\mu} \ge 0 \, .
	\end{align}
	
	The following definition of a measure-valued solution for system \eqref{e:adiabatic_thermoelasticity} thus arises:
	
	\begin{Definition} \label{MVS_adiabatic_theromelasticity}
		A dissipative measure-valued solution to adiabatic thermoelasticity \eqref{e:adiabatic_thermoelasticity}, \eqref{e:entropy.prod}
		consists of a thermomechanical process $(y(t,x),\eta(t,x)):[0, T]\times Q\to\mathbb{R}^3\times\mathbb{R}$ for any $T > 0$,
		\begin{align}
			\label{e:regulatity.y} 
			y \in W^{1,\infty}(0,T;L^2(Q)) \cap L^\infty (0,T;W^{1,p} (Q)) \, , \quad \eta\in L^{\infty}(0,T;L^{q} (Q)   ) \,  ,
		\end{align}
		a parametrized family of probability measures $\nu=(\nu_{t,x})_{(t,x)\in Q_T}$ and a nonnegative Radon measure 
		$\bg\in\mathcal{M}^+(Q_{T}).$ \textcolor{black}{The measure $\nu$ is generated by a sequence $(v^\varepsilon,\nabla y^\varepsilon,\eta^\varepsilon)$ 
			such that 
			\begin{align}\label{H-1gen}
				\begin{split}
					&(y^\varepsilon)\hspace{0.2cm}\text{is bounded in}\hspace{0.2cm}L^\infty(0,T;W^{1,p}(Q))\\
					&(\partial_t\nabla y^\varepsilon) \hspace{0.2cm}\text{is bounded in}\hspace{0.2cm}L^\infty(0,T;H^{-1}(Q))\\
					&(\eta^\varepsilon)\hspace{0.2cm}\text{is bounded in}\hspace{0.2cm}L^\infty(0,T;L^{q}(Q)).
				\end{split}
		\end{align}}
		If $(v,F,\eta)$ denote the averages
		\begin{align*}
			F=\left\la\nu_{t,x},\lambda_F \right\ra, \quad v=\left\la\nu_{t,x},\lv\right\ra, \quad \eta=\left\la\nu_{t,x},\li\right\ra \, , 
		\end{align*} 
		then $\nu_{t,x}$ and $\bg$ satisfy
		\begin{align}
			\label{e:regulatity.xi-v-eta}
			F=\nabla y\in L^{\infty}(L^p), \quad  v=\partial_t y\in L^{\infty}(L^2) \, , 
		\end{align}
		and the equations
		\begin{align}
			\begin{split}
				\label{e:mv.sol.xi-v-eta}
				\partial_t F&=\partial_{\alpha}v_i \\ 
				\partial_t \left\la\nu_{t,x},\lvi\right\ra
				&=\partial_{\alpha}\left\la\nu_{t,x},\frac{\partial e}{\partial F_{i\alpha}}(\lambda_F,\lambda_{\eta})\right\ra\\
				\partial_t \left\la\nu_{t,x},\li\right\ra &\geq \left\la\nu_{t,x},\frac{r}{\theta(\lambda_F,\li)}\right\ra
			\end{split}
		\end{align}
		in the sense of  distributions. Moreover, they satisfy the integrated form of the  averaged energy inequality,
		\begin{small}
			\begin{align}
				\begin{split}
					\label{e:mv.sol.Fvt.energy}
					\int& \varphi(0)\left\la\nu_{0,x},\frac{1}{2}|\lv|^2+e(\lambda_F,\li)\right\ra\:dx \\ 
					&\int_0^T
					\int  \varphi^{\prime}(t) \left(\left\la\nu_{t,x},\frac{1}{2}|\lv|^2+e(\lambda_F,\li) \right\ra(t,x)  \, dx\:dt +\bg(dx\:dt) \right)  \\
					&=-\int_0^T\int \left\la\nu_{t,x},r\right\ra\varphi(t)\:dx\:dt,
				\end{split}
			\end{align}
		\end{small}
		holding for all $\varphi \in C^{\infty}_c([0,T))$, $\varphi\ge 0$.
	\end{Definition}
	
	\begin{Remark}\rm
		On the definition of the dissipative measure-valued solution:
		\begin{enumerate}
			\item \textcolor{black}{We remark that in addition to the uniform estimate \eqref{energy_unif_bound}, natural approximations
				of \eqref{e:adiabatic_thermoelasticity}, \eqref{e:entropy.prod} produce a uniform bound on the time derivatives of $(F^\varepsilon)$ 
				and $(v^\varepsilon)$ in a negative Sobolev space. We take all this into account by assuming \eqref{H-1gen}. }
			\item  The first equation holds in a classical weak form, due to its linearity.
			\item  Henceforth, we  assume the measure $\bg_0=0,$ meaning that we consider initial data with no 
			concentrations at time $t=0.$ 
			\item We choose to work with dissipative measure-valued solutions, namely solutions that satisfy the integrated form of the averaged 
			energy equation \eqref{e:mv.sol.Fvt.energy}. This approach has the technical advantage that one does not need to place any integrability condition on the right hand-side of the energy equation \eqref{e:adiabatic_thermoelasticity}$_3$, namely on the term 
			$\Sigma_{i\alpha} v_i,$ since it appears as a divergence and its contribution integrates to zero.
		\end{enumerate}
	\end{Remark}
	
	\section{The averaged relative entropy inequality} \label{sec_RE}
	Consider a strong solution $(\bar{F},\bar{v},\bar{\eta})^T \in W^{1,\infty}(Q_T)$ to \eqref{e:adiabatic_thermoelasticity}
	that satisfies the entropy identity \eqref{e:entropy.prod} and a dissipative measure-valued solution to 
	\eqref{e:adiabatic_thermoelasticity},~\eqref{e:entropy.prod} according to Definition \ref{MVS_adiabatic_theromelasticity}. 
	We write the difference of the weak form of 
	equations \eqref{e:adiabatic_thermoelasticity} and (\ref{e:mv.sol.xi-v-eta}), to obtain the following	three integral identities
	\begin{equation}
		\begin{split}
			\label{eq1.wk.mv}
			\int(F_{i\alpha}-\bar{F}_{i\alpha})(0,x)\phi_1(0,x)\:dx 
			&+\int_0^T \int(F_{i\alpha}-\bar{F}_{i\alpha})\partial_t\phi_1(t,x)\:dx\:dt\\
			&=\int_0^T \int (v_i-\bar{v}_i)\partial_{\alpha}\phi_1(t,x)\:dx\:dt,
		\end{split}
	\end{equation}
	\begin{equation}
		\begin{split}
			\label{eq2.wk.mv}
			\int(\left\la\nu_{0,x},\lvi\right\ra-&\bar{v}_i(0,x))\phi_2(0,x)\:dx
			+\int_0^T\int (\left\la\nu_{t,x},\lvi\right\ra-\bar{v}_i)\partial_t\phi_2(t,x)\:dx\:dt\\
			&=\int_0^T\int \left(\left\la\nu_{t,x},\Sigma_{i\alpha}(\lf,\li)\right\ra-\Sigma_{i\alpha}(\bar F,\bar\eta)\right)\partial_{\alpha}\phi_2(t,x)\:dx\:dt\;,
		\end{split}
	\end{equation}
	and
	\begin{align}
		\label{eq3.wk.mv}
		&\int\left(\left\la\nu_{0,x},\frac{1}{2}|\lv|^2+e(\lf,\li)\right\ra
		-\left(\frac{1}{2}|\bar{v}|^2+e(\bar{F},\bar{\eta})\right)(0,x)\right)\:\phi_3(0,x)\:dx\nonumber \\
		&\;+\int_0^T \int \Bigg\{\left(\left\la\nu_{t,x},\frac{1}{2}|\lv|^2+e(\lf,\li)\right\ra
		-\frac{1}{2}|\bar{v}|^2-e(\bar{F},\bar{\eta})\right)
		+\bg\Bigg\}\partial_t\phi_3(t,x)\:dx\:dt\nonumber \\
		&=-\int_0^T \int (\left\la\nu_{t,x},r\right\ra-\bar{r})\phi_3(t,x)\:dx\:dt,
	\end{align}
	for any $\phi_i\in C^1_c([0,T)\times Q)$, $i=1, 2$ and $\phi_3\in C^1_c([0,T)).$ Similarly, testing the difference of 
	(\ref{e:entropy.prod}) and (\ref{e:mv.sol.xi-v-eta})$_3$ against $\phi_4\in C^1_c([0,T)\times Q),$  with $\phi_4\ge 0$, we have
	\begin{equation}
		\label{entr1.wka.mv}
		\begin{split}
			-\int(\left\la\nu_{0,x},\li\right\ra-\bar{\eta}(0,x))&\phi_4(0,x)\:dx
			-\int_0^T\int(\left\la\nu_{t,x},\li\right\ra-\bar{\eta})\partial_t\phi_4(t,x)\:dx\:dt\\ &\geq\int_0^T\int\left(\left\la\nu_{t,x},\frac{r}{\theta(\lf,\li)}\right\ra-\frac{\bar{r}}{\theta(\bar{F},\bar{\eta})}\right)\phi_4(t,x)\:dx\:dt.
		\end{split}
	\end{equation}
	We then choose \[(\phi_1,\phi_2,\phi_3)=-\theta(\bar{F},\bar{\eta})\,G(\bar{U})\varphi(t)=( -\Sigma(\bar{F},\bar{\eta}) ,-\bar{v},1)^T\varphi(t),\]
	for some $\varphi\in C_c^1[0,T]$. Thus, by virtue of 
	\eqref{e:constitutive_functions}, equations (\ref{eq1.wk.mv}), (\ref{eq2.wk.mv}) and (\ref{eq3.wk.mv}) become
	\begin{small}
		\begin{align}
			\begin{split}
				\label{eq1phi.wk.mv}
				\int &\left(-\frac{\partial e}{\partial F_{i\alpha}}(\bar{F},\bar{\eta})(F_{i\alpha}-\bar{F}_{i\alpha})\right)(0,x)\varphi(0)\:dx \\
				&+\int_0^T\int\left(-\frac{\partial e}{\partial F_{i\alpha}}(\bar{F},\bar{\theta})(F_{i\alpha}-\bar{F}_{i\alpha})\right)\varphi^{\prime}(t)\:dx\:dt\\
				&=\int_0^T\int \left[\partial_t\Big(\frac{\partial e}{\partial F_{i\alpha}}(\bar{F},\bar{\eta})\Big)(F_{i\alpha}-\bar{F}_{i\alpha})
				-\partial_{\alpha}\Big(\frac{\partial e}{\partial F_{i\alpha}}(\bar{F},\bar{\eta})\Big)(v_i-\bar{v}_i)\right]\varphi(t) \:dx\:dt \, ,
			\end{split}
		\end{align}
	\end{small}
	\begin{equation}
		\begin{split}
			\label{eq2phi.wk.mv}
			\int&-\bar{v}_i(\left\la\nu_{0,x},\lvi\right\ra-\bar{v}_i(0,x))\varphi(0)\:dx
			+\int_0^T\int-\bar{v_i}(\left\la\nu_{t,x},\lvi\right\ra-\bar{v}_i)\varphi^{\prime}(t)\:dx\:dt\\
			&=-\int_0^T\int\left[-\partial_{\alpha}\left(\frac{\partial e}{\partial F_{i\alpha}}(\bar{F},\bar{\eta})\right)
			(\left\la\nu_{t,x},\lvi\right\ra-\bar{v}_i)\right.\\
			&\:\:\hspace{2cm}\left.
			+\partial_{\alpha}\bar{v}_i\left(\left\la\nu_{t,x},\frac{\partial e}{\partial F_{i\alpha}}(\lf,\li)\right\ra-\frac{\partial e}{\partial F_{i\alpha}}(\bar{F},\bar{\eta})\right)\right]\varphi(t)\:dx\:dt\;,
		\end{split}
	\end{equation}
	and
	\begin{align}
		\label{eq3phi.wk.mv}
		\int&\left(\left\la\nu_{0,x},\frac{1}{2}|\lv|^2+e(\lf,\li)\right\ra
		-\left(\frac{1}{2} |\bar{v}|^2-e(\bar{F},\bar{\eta})\right)\!(0,x)\right)\;\varphi(0)\:dx\nonumber \\
		&\;\;+\int_0^T \int \Big\{\left(\left\la\nu_{t,x},\frac{1}{2}|\lv|^2+e(\lf,\li)\right\ra
		-\frac{1}{2}|\bar{v}|^2-e(\bar{F},\bar{\eta})\right)
		+\bg\Big\}\varphi^{\prime}(t)\:dx\:dt\nonumber \\
		&\quad=-\int_0^T \int (\left\la\nu_{t,x},r\right\ra-\bar{r})\varphi(t)\:dx\:dt.
	\end{align}
	For inequality (\ref{entr1.wka.mv}), we choose accordingly $\phi_4:=\theta(\bar{F},\bar{\eta})\varphi(t)\geq 0$, $\varphi \geq 0$ so that
	\begin{align}
		\begin{split}
			\label{entr2.wk.mv}
			-\int&\theta(\bar{F},\bar{\eta})(\left\la\nu_{0,x},\li\right\ra-\bar{\eta}(0,x))\varphi(0)\:dx
			-\int_0^T\int\theta(\bar{F},\bar{\eta})(\left\la\nu_{t,x},\li\right\ra-\bar{\eta})\varphi^{\prime}(t)\:dx\:dt\\
			&\geq\int_0^T\int \Bigg[\partial_t\theta(\bar{F},\bar{\eta})(\left\la\nu_{t,x},\li\right\ra-\bar{\eta})\\
			&\qquad\qquad\qquad
			+\theta(\bar{F},\bar{\eta})\left(\left\la\nu_{t,x},\frac{r}{\theta(\lf,\li)}\right\ra-\frac{\bar{r}}{\theta(\bar{F},\bar{\eta})}\right)\Bigg]\varphi(t)\:dx\:dt.
		\end{split}
	\end{align}
	Adding together (\ref{eq1phi.wk.mv}), (\ref{eq2phi.wk.mv}), (\ref{eq3phi.wk.mv}) and (\ref{entr2.wk.mv}), we obtain the integral inequality
	\begin{align}
		\label{entr.eq.1.mv}
		\int&\varphi(0)\bigg[-\frac{\partial e}{\partial F_{i\alpha}}(\bar{F},\bar{\eta})(F_{i\alpha}-\bar{F}_{i\alpha})(0,x)-\la\nu_{0,x},\bar{v}_i(\lvi-\bar{v}_i)\ra(0,x) \nonumber \\
		&\qquad
		+\left\la\nu_{0,x},\frac{1}{2}|\lv|^2+e(\lf,\li)-\frac{1}{2}|\bar{v}|^2-e(\bar{F},\bar{\eta})\right\ra(0,x) \nonumber \\
		&\qquad\qquad\qquad\qquad\qquad\qquad
		-\theta(\bar{F},\bar{\eta})\la\nu_{0,x},\li-\bar{\eta}\ra(0,x)\bigg]\:dx \nonumber \\
		&+\int_{0}^{T}\!\!\!\!\int\varphi'(t)\bigg[-\frac{\partial e}{\partial F_{i\alpha}}(\bar{F},\bar{\eta})(F_{i\alpha}-\bar{F}_{i\alpha})-\la\nu_{t,x},\bar{v}_i(\lvi-\bar{v}_i)\ra \nonumber \\
		&\qquad\qquad\qquad
		+\left\la\nu_{t,x},\frac{1}{2}|\lv|^2+e(\lf,\li)-\frac{1}{2}|\bar{v}|^2-e(\bar{F},\bar{\eta})\right\ra \nonumber \\
		&\qquad\qquad\qquad\qquad\qquad\qquad\quad
		-\theta(\bar{F},\bar{\eta})\la\nu_{t,x},\li-\bar{\eta}\ra+\bg\bigg]\:dx\:dt \nonumber \\
		&\!\!\!\!\!\!
		\geq-\int_{0}^{T}\!\!\!\!\int\varphi(t)\bigg[
		-\partial_t\Big(\frac{\partial e}{\partial F_{i\alpha}}(\bar{F},\bar{\eta})\Big)(F_{i\alpha}-\bar{F}_{i\alpha})
		\nonumber \\
		&\qquad\qquad\qquad
		+\partial_{\alpha}\bar{v}_i\left(\left\la\nu_{t,x},\frac{\partial e}{\partial F_{i\alpha}}(\lf,\li)\right\ra-\frac{\partial e}{\partial F_{i\alpha}}(\bar{F},\bar{\eta})\right) \nonumber \\
		&\qquad\qquad\qquad
		-\partial_t\theta(\bar{F},\bar{\eta})(\left\la\nu_{t,x},\li\right\ra-\bar{\eta}) \nonumber \\
		&\qquad\qquad\qquad
		-\theta(\bar{F},\bar{\eta})\left(\left\la\nu_{t,x},\frac{r}{\theta(\lf,\li)}\right\ra-\frac{\bar{r}}{\theta(\bar{F},\bar{\eta})}\right)+\la\nu_{t,x},r-\bar{r}\ra\bigg]\:dx\:dt\nonumber \\
		&\!\!\!\!\!\!
		=:-\int_{0}^{T}\int\varphi(t)\mathcal{R}(t,x)\:dx\:dt\;.
	\end{align} 	
	Using relations \eqref{e:constitutive_functions}, the entropy production identity (\ref{e:entropy.prod}) that holds for strong solutions and equation \eqref{e:adiabatic_thermoelasticity}$_1$, the quantity  $\mathcal{R}(t,x)$ in the integrand on the right hand-side of (\ref{entr.eq.1.mv}) becomes
	\begin{align}
		\label{entr.eq.2.mv}
		\mathcal{R}&=-\partial_t\bar{F}_{j\beta}\frac{\partial^2e}{\partial F_{i\alpha} \partial F_{j\beta}}(\bar{F},\bar{\eta})
		(F_{i\alpha}-\bar{F}_{i\alpha})
		-\partial_t\bar{\eta}\frac{\partial^2e}{\partial F_{i\alpha} \partial \eta}(\bar{F},\bar{\eta})
		(F_{i\alpha}-\bar{F}_{i\alpha})\nonumber\\
		&+\partial_t \bar{F}_{i\alpha}\left\la\nu_{t,x},\frac{\partial e}{\partial F_{i\alpha}}(\lambda_{F},\lambda_{\eta})
		-\frac{\partial e}{\partial F_{i\alpha}}(\bar{F},\bar{\eta})\right\ra
		-\partial_t \bar{F}_{i\alpha}\frac{\partial^2e}{\partial F_{i\alpha} \partial \eta}(\bar{F},\bar{\eta})
		\la\nu_{t,x},\li-\bar{\eta}\ra\nonumber\\
		&-\partial_t\bar{\eta}\frac{\partial^2e}{\partial\eta^2}(\bar{F},\bar{\eta})\la\nu_{t,x},\li-\bar{\eta}\ra\nonumber\\
		&+\partial_t\bar{\eta}\la\nu_{t,x},\theta(\lf,\li)-\theta(\bar{F},\bar{\eta})\ra
		-\partial_t\bar{\eta}\la\nu_{t,x},\theta(\lf,\li)-\theta(\bar{F},\bar{\eta})\ra\nonumber\\
		&-\theta(\bar{F},\bar{\eta})\left(\left\la\nu_{t,x},\frac{r}{\theta(\lf,\li)}\right\ra
		-\frac{\bar{r}}{\theta(\bar{F},\bar{\eta})}\right)+\la\nu_{t,x},r-\bar{r}\ra\nonumber\\
		&\!\!\!\!\!\!=
		\partial_t\bar{\eta}\Bigg[\left\la\nu_{t,x},\theta(\lf,\li)-\theta(\bar{F},\bar{\eta})\right\ra\nonumber\\
		&\qquad\qquad\qquad
		-\frac{\partial\theta}{\partial F_{i\alpha}}(\bar{F},\bar{\eta})(F_{i\alpha}-\bar{F}_{i\alpha})
		-\frac{\partial\theta}{\partial\eta}(\bar{F},\bar{\eta})\la\nu_{t,x},\li-\bar{\eta}\ra\Bigg]\nonumber\\
		&+\partial_t \bar{F}_{j\beta}\Bigg[\left\la\nu_{t,x},\Sigma_{j\beta}(\lf,\li)-\Sigma_{j\beta}(\bar{F},\bar{\eta})\right\ra\nonumber\\
		&\qquad\qquad\qquad
		-\frac{\partial^2e}{\partial F_{i\alpha} \partial F_{j\beta}}(\bar{F},\bar{\eta})(F_{i\alpha}-\bar{F}_{i\alpha})
		-\frac{\partial^2e}{\partial \eta\, \partial F_{j\beta}}(\bar{F},\bar{\eta})\la\nu_{t,x},\li-\bar{\eta}\ra\Bigg]\nonumber\\
		&-\frac{\bar{r}}{\theta(\bar{F},\bar{\eta})}\la\nu_{t,x},\theta(\lf,\li)-\theta(\bar{F},\bar{\eta})\ra\nonumber\\
		&-\theta(\bar{F},\bar{\eta})\left(\left\la\nu_{t,x},\frac{r}{\theta(\lf,\li)}\right\ra
		-\frac{\bar{r}}{\theta(\bar{F},\bar{\eta})}\right)+\la\nu_{t,x},r-\bar{r}\ra\nonumber\\
		&\!\!\!\!\!\!=\partial_t\bar{\eta}\left\la\nu_{t,x},\theta(\lf,\li|\bar{F},\bar{\eta})\right\ra
		+\partial_t\bar{F}_{j\beta}\left\la\nu_{t,x},\Sigma_{j\beta}(\lf,\li|\bar{F},\bar{\eta})\right\ra\nonumber\\
		&+\left\la\nu_{t,x},(\theta(\lf,\li)-\theta(\bar{F},\bar{\eta}))\left(\frac{r}{\theta(\lf,\li)}-\frac{\bar{r}}{\theta(\bar{F},\bar{\eta})}\right)\right\ra.
	\end{align}
	Above, we have used the following notation:
	\begin{align}
		\begin{split}
			\label{partial.theta.rel.mv}
			\left\la\nu_{t,x},\theta(\lf,\li|\bar{F},\bar{\theta})\right\ra
			:=&\Bigg\langle\nu_{t,x},\theta(\lf,\li)-\theta(\bar{F},\bar{\theta})\\
			&-\frac{\partial \theta}{\partial F_{i\alpha}}(\bar{F},\bar{\eta})(F_{i\alpha}-\bar{F}_{i\alpha})
			-\frac{\partial \theta}{\partial\eta}(\bar{F},\bar{\eta})(\li-\bar{\eta})\Bigg\rangle\;,
		\end{split}
	\end{align}
	and
	\begin{align}
		\begin{split}
			\label{partial.s.rel.mv}
			\left\la\nu_{t,x},\Sigma_{i\alpha}(\lf,\li|\bar{F},\bar{\eta})\right\ra
			:=&\Bigg\langle\nu_{t,x},\Sigma_{i\alpha}(\lf,\li)-\Sigma_{i\alpha}(\bar{F},\bar{\eta})\\
			&\!\!\!\!\!\!\!\!\!\!\!\!\!\!\!\!\!\!\!\!
			-\frac{\partial^2 e}{\partial F_{i\alpha} \partial F_{j\beta}}(\bar{F},\bar{\eta})(\lf-\bar{F})
			-\frac{\partial^2 e}{\partial F_{i\alpha} \partial \eta}(\bar{F},\bar{\eta})(\li-\bar{\eta})\Bigg\rangle.
		\end{split}
	\end{align}  
	If we define the averaged quantity
	\begin{align}
		\begin{split}
			\label{def.I}
			I(\lambda_{U}|\bar{U})=I(\lf,\lv,\li|\bar{F},\bar{v},\bar{\eta})
			:=\frac{1}{2}|\lv-\bar{v}|^2+e(\lf,\li|\bar{F},\bar{\eta})
		\end{split}
	\end{align}  
	for
	\begin{align*}
		e(\lf,&\li|\bar{F},\bar{\eta})\\
		&:=e(\lf,\li)-e(\bar{F},\bar{\eta})
		-\frac{\partial e}{\partial F_{i\alpha}}(\bar{F},\bar{\eta})(\lf-\bar{F})_{i\alpha}
		-\frac{\partial e}{\partial \eta}(\bar{F},\bar{\eta})(\li-\bar{\eta}),
	\end{align*}
	we observe that the term on the left hand-side of \eqref{entr.eq.1.mv} becomes
	\begin{align}
		\label{entr.eq.3.mv}
		\begin{split}
			&-\frac{\partial e}{\partial F_{i\alpha}}(\bar{F},\bar{\eta})(F_{i\alpha}-\bar{F}_{i\alpha})-\la\nu_{t,x},\bar{v}_i(\lvi-\bar{v}_i)\ra \\
			&\quad
			+\left\la\nu_{t,x},\frac{1}{2}|\lv|^2+e(\lf,\li)-\frac{1}{2}|\bar{v}|^2-e(\bar{F},\bar{\eta})\right\ra
			-\theta(\bar{F},\bar{\eta})\la\nu_{t,x},\li-\bar{\eta}\ra\\
			&\;\;
			=\left\la\nu_{t,x},\frac{1}{2}|\lv-\bar{v}|^2\right\ra+\left\la\nu_{t,x},e(\lf,\li|\bar{F},\bar{\eta})\right\ra\\
			&\;\;
			=\left\la\nu_{t,x},I(\lf,\lv,\li|\bar{F},\bar{v},\bar{\eta})\right\ra.
		\end{split}
	\end{align}
	We then combine (\ref{entr.eq.1.mv}),(\ref{entr.eq.2.mv}), and (\ref{entr.eq.3.mv}) 
	to arrive at the 
	relative entropy inequality
	\begin{align}
		\label{rel.en.id.mv}
		\begin{split}
			\int&\varphi(0)[\left\la\nu_{0,x},I(\lambda_{U_0}|\bar{U_0})\right\ra\:dx]\\
			&+\int_{0}^{T}\int\varphi'(t)\left[\left\la\nu_{t,x},I(\lambda_{U}|\bar{U})\right\ra\:dx\:dt+\bg(dx\,dt)\right]\\
			&\geq-\int_{0}^{T}\int\varphi(t)\Bigg[\partial_t\bar{\eta}\left\la\nu_{t,x},\theta(\lf,\li|\bar{F},\bar{\eta})\right\ra
			+\partial_t\bar{F}_{j\beta}\left\la\nu_{t,x},\Sigma_{j\beta}(\lf,\li|\bar{F},\bar{\eta})\right\ra\\
			&\qquad\qquad\qquad
			+\left\la\nu_{t,x},(\theta(\lf,\li)-\theta(\bar{F},\bar{\eta}))\left(\frac{r}{\theta(\lf,\li)}-\frac{\bar{r}}{\theta(\bar{F},\bar{\eta})}\right)\right\ra
			\Bigg]\:dx\,dt.
		\end{split}
	\end{align}
	
	\section{Measure-valued versus strong uniqueness} \label{sec_U}
	
	Due to the relative entropy inequality \eqref{rel.en.id.mv}, we may now show that classical solutions are unique in the class of dissipative measure-valued solutions. Henceforth, we assume that the internal energy satisfies the following assumptions:	
	\begin{enumerate}
		\item[$(H_1)$] $e\in C^3(\mathbb{R}^{d\times d}\times\mathbb{R})$
		\item[$(H_2)$] $c(|F|^p+|\eta|^q-1)\leq e(F,\eta)\leq c(|F|^p+|\eta|^q+1)$
		\item[$(H_3)$] 	$|e_F(F,\eta)|\lesssim 1+|F|^{p-1}+|\eta|^{q\frac{p-1}{p}},$ and
		$|e_\eta(F,\eta)|\lesssim 1+|F|^{p\frac{q-1}{q}}+|\eta|^{q-1}.$
	\end{enumerate}
	
	To establish the measure-valued vs strong uniqueness result, we first assert that the following bounds on the relative entropy and the terms on the right hand side of \eqref{rel.en.id.mv} can be obtained given the above hypotheses and the quasiconvexity assumption.
	
	\begin{Lemma}\label{Lemma1}
		Given hypotheses $(H_1)-(H_3)$, for $p,\, q\geq 2$, assume that the 
		smooth solution $(\bar{F},\bar{v},\bar{\eta})$ lies in the compact set
		\begin{equation*}
			\Gamma_{K}:=\left\{(\bar{F},\bar{v},\bar{\eta}): |\bar{F}(t,\cdot)|\leq K, |\bar{v}(t,\cdot)|\leq K, |\bar{\eta}(t,\cdot)|\leq K \right\}
		\end{equation*}
		for a positive constant $K$. Then there exist constants $C_1,C_2,C_3>0$ such that
		\begin{equation}
			\label{bound_rel.1}
			|I(F,v,\eta|\bar{F},\bar{v},\bar{\eta})|\leq C_1 \left(|v-\bar{v}|^2+|V_p(F-\bar{F})|^2+|V_q(\eta-\bar{\eta})|^2\right)
		\end{equation}
		\begin{equation}
			\label{bound_rel.2}
			|\theta(F,\eta|\bar{F},\bar{\eta})|\leq C_2 \left(|V_p(F-\bar{F})|^2+|V_q(\eta-\bar{\eta})|^2\right)
		\end{equation}
		and
		\begin{equation}
			\label{bound_rel.3}
			|\Sigma(F,\eta|\bar{F},\bar{\eta})|\leq C_3 \left(|V_p(F-\bar{F})|^2+|V_q(\eta-\bar{\eta})|^2\right).
		\end{equation}
		Under the additional hypothesis:
		\begin{equation}
			\label{lowerb}
			\theta (F, \eta) = \frac{\partial e}{\partial \eta}(F, \eta) \ge \delta >0 \,  ,
		\end{equation}
		and given that $r(t,x)=\bar r(t,x)\in L^{\infty}(Q_T),$ there exist a constant $C_4>0$ such that
		\begin{align}
			\label{bound_rel.4}
			\begin{split}
				\Bigg|(\theta(F,\eta)-\theta(\bar{F},\bar{\eta}))&\left(\frac{r}{\theta(F,\eta)}-\frac{\bar{r}}{\theta(\bar{F},\bar{\eta})}\right)\Bigg|\\
				&\leq C_4 \left(|V_p(F-\bar{F})|^2+|V_q(\eta-\bar{\eta})|^2\right)
			\end{split}
		\end{align}
		for all $(\bar{F},\bar{v},\bar{\eta})\in\Gamma_{K}$.
	\end{Lemma}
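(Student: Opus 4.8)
The plan is to view all four relative quantities as second-order Taylor remainders (for \eqref{bound_rel.4}, after an algebraic simplification) and to run a \emph{near-field/far-field} dichotomy with threshold $1$: split the range of the argument $(F,\eta)$ according to whether $|F-\baf|\le 1$ and $|\eta-\bai|\le 1$ (\emph{near}) or $|F-\baf|>1$ or $|\eta-\bai|>1$ (\emph{far}). Since $(\baf,\bai)$ stays in the compact set $\{|F|\le K\}\times\{|\eta|\le K\}$, in the near regime $(F,\eta)$ ranges over the compact set $\mathcal{K}:=\{|F|\le K+1\}\times\{|\eta|\le K+1\}$, on which $e\in C^3$ and hence all its derivatives up to order $3$ are bounded; there Taylor's theorem applies directly. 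In the far regime we invoke the growth hypotheses $(H_2)$, $(H_3)$ together with the elementary fact that $|V_p(F-\baf)|^2+|V_q(\eta-\bai)|^2\gtrsim 1$, which allows one to absorb any additive constant and any bounded term into the right-hand side.

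\emph{Near regime.} By Taylor with integral remainder, $e(F,\eta|\baf,\bai)$ is a quadratic form in $(F-\baf,\eta-\bai)$ with coefficients $\int_0^1(1-s)D^2e(\baf+s(F-\baf),\bai+s(\eta-\bai))\,ds$, so bounding $\|D^2e\|_{L^\infty(\mathcal K)}$ and using $|V_p(z)|^2\ge|z|^2$, $|V_q(z)|^2\ge|z|^2$ gives $|e(F,\eta|\baf,\bai)|\lesssim|V_p(F-\baf)|^2+|V_q(\eta-\bai)|^2$; combined with $I=\tfrac12|v-\bar v|^2+e(\cdot\,|\,\cdot)$ from \eqref{def.I} this yields \eqref{bound_rel.1}. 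The quantities in \eqref{partial.theta.rel.mv}, \eqref{partial.s.rel.mv} are precisely the second-order Taylor remainders of $\theta=e_\eta$ and $\Sigma=e_F$, so the same argument with $\|D^3e\|_{L^\infty(\mathcal K)}$ replacing $\|D^2e\|$ gives the near-regime halves of \eqref{bound_rel.2}, \eqref{bound_rel.3}; this is where $C^3$-regularity is used.

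\emph{Far regime.} By the triangle inequality, $|e(F,\eta|\baf,\bai)|\le|e(F,\eta)|+|e(\baf,\bai)|+|e_F(\baf,\bai)|\,|F-\baf|+|e_\eta(\baf,\bai)|\,|\eta-\bai|$. Hypothesis $(H_2)$ and $|\baf|,|\bai|\le K$ give $|e(F,\eta)|\lesssim 1+|F-\baf|^p+|\eta-\bai|^q$ and $|e(\baf,\bai)|\lesssim 1$, while $(H_3)$ makes $|e_F(\baf,\bai)|,|e_\eta(\baf,\bai)|\lesssim 1$; using $|z|\le 1+|z|^p$, $|z|\le 1+|z|^q$ and absorbing constants yields \eqref{bound_rel.1}'s $e$-part. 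For \eqref{bound_rel.2}, \eqref{bound_rel.3} the same scheme works: by $(H_3)$, $|\theta(F,\eta)|\lesssim 1+|F|^{p(q-1)/q}+|\eta|^{q-1}$ and $|\Sigma(F,\eta)|\lesssim 1+|F|^{p-1}+|\eta|^{q(p-1)/p}$, where every exponent on a power of $F$ is strictly below $p$ and every one on a power of $\eta$ is strictly below $q$; after $|F|\le K+|F-\baf|$, $|\eta|\le K+|\eta-\bai|$ and absorption these are controlled by $|V_p(F-\baf)|^2+|V_q(\eta-\bai)|^2$, and the linearisation terms in \eqref{partial.theta.rel.mv}, \eqref{partial.s.rel.mv} carry coefficients that are second derivatives of $e$ at $(\baf,\bai)$, hence $\lesssim 1$, handled identically.

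\emph{The dissipation term, and the main obstacle.} Since $r=\bar r$, the quantity in \eqref{bound_rel.4} equals $-\,r\,\dfrac{(\theta(F,\eta)-\theta(\baf,\bai))^2}{\theta(F,\eta)\,\theta(\baf,\bai)}$, so with $r\in L^\infty$ and $\theta(\baf,\bai)\ge\delta$ it suffices to bound $\dfrac{(\theta(F,\eta)-\theta(\baf,\bai))^2}{\theta(F,\eta)}$. In the near regime $\theta$ is bounded above and below on $\mathcal K$ and $|\theta(F,\eta)-\theta(\baf,\bai)|\lesssim|F-\baf|+|\eta-\bai|$ by the mean value theorem, which gives the bound. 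The far regime is the delicate point: the crude estimate $(\theta(F,\eta)-\theta(\baf,\bai))^2\le 2\theta(F,\eta)^2+2\theta(\baf,\bai)^2$ would, via $(H_3)$, produce powers $|F|^{2p(q-1)/q}$ and $|\eta|^{2(q-1)}$ whose exponents exceed $p$ and $q$ when $q>2$, so the denominator cannot be discarded. Instead, with $a=\theta(F,\eta)\ge\delta$ and $b=\theta(\baf,\bai)\in[\delta,C]$, one uses $\dfrac{(a-b)^2}{a}=a-2b+\dfrac{b^2}{a}\le a+\dfrac{C^2}{\delta}\lesssim\theta(F,\eta)+1$; then $(H_3)$ gives $\theta(F,\eta)\lesssim 1+|F-\baf|^{p(q-1)/q}+|\eta-\bai|^{q-1}$, and since $p(q-1)/q<p$, $q-1<q$ and we are in the far regime, this is $\lesssim|V_p(F-\baf)|^2+|V_q(\eta-\bai)|^2$, proving \eqref{bound_rel.4}. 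The essential care throughout is this denominator bookkeeping and the systematic absorption of constants via $|V_p(F-\baf)|^2+|V_q(\eta-\bai)|^2\gtrsim 1$ in the far regime.
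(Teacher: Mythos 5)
Your proof is correct and takes essentially the same near-field/far-field approach as the paper: Taylor with integral remainder on a compact set in the near regime, growth hypotheses $(H_2)$, $(H_3)$ together with absorption of constants via $|V_p|^2+|V_q|^2\gtrsim 1$ in the far regime, which is exactly how the paper proves \eqref{bound_rel.2}--\eqref{bound_rel.4} inline and how it proves \eqref{bound_rel.1} by invoking Lemma \ref{growths}(a). The only minor variation is your far-regime treatment of the dissipation term, where you push through the squared quotient and use the identity $(a-b)^2/a = a - 2b + b^2/a$; the paper more simply bounds the second factor $\left|\tfrac{r}{\theta}-\tfrac{\bar r}{\bar\theta}\right|\lesssim \|r\|_{L^\infty}/\delta$ by a constant and then estimates $|\theta(F,\eta)-\theta(\bar F,\bar\eta)|\lesssim |\theta(F,\eta)|+1$, avoiding the square altogether, but both routes are valid.
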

	
	\begin{proof}
		For the proof of \eqref{bound_rel.1}, observe that 
		\begin{align*}
			e(F,\eta&|\bar{F},\bar{\eta})=\\
			&\int_0^1 (1-s) D^2 e(\bar{F}+s(F-\bar{F}),\bar{\eta}+s(\eta-\bar{\eta}))(F-\bar{F},\eta-\bar{\eta}):(F-\bar{F},\eta-\bar{\eta})\:ds.
		\end{align*}
		Given the set of hypotheses $(H_1)-(H_3)$, \eqref{bound_rel.1} follows due to the Appendix as it coincides with the proof of Lemma \ref{growths} (a) by setting $\xi_1=F-\bar{F},$ $\xi_2=\eta-\bar{\eta},$ $\lambda_1=\bar{F},$ $\lambda_2=\bar{\eta}$ and $z_1=z_2=0.$

		Moving to bound \eqref{bound_rel.2}, we cannot use the proof in the Appendix directly, as $\theta$ does not satisfy the same
		growth conditions as $e.$ We start by expressing $\theta(F,\eta|\bar{F},\bar{\eta})$ as follows: 
		\begin{align*}
			\theta(F,\eta|&\bar{F},\bar{\eta})=\\
			&\int_0^1 (1-s) D^2 \theta(\bar{F}+s(F-\bar{F}),\bar{\eta}+s(\eta-\bar{\eta}))(F-\bar{F},\eta-\bar{\eta}):(F-\bar{F},\eta-\bar{\eta})\:ds
		\end{align*}
		so that
		\begin{align*}
			|\theta(F,\eta|\bar{F},\bar{\eta})| \leq C \left(|F-\bar{F}|^2+|\eta-\bar{\eta}|^2\right)
		\end{align*}
		where $C=C(d,\max D^2 \theta)$ in the region $|F-\bar{F}|+|\eta-\bar{\eta}|\leq1$ and $(\bar{F},\bar{\eta})\in\Gamma_{K}$.
		If $|F-\bar{F}|+|\eta-\bar{\eta}|>1$ and $(\bar{F},\bar{\eta})\in\Gamma_{K}$ we have
		\begin{align*}
			|\theta(F,\eta|\bar{F},\bar{\eta})|&\leq |\theta(F,\eta)-\theta(\bar{F},\bar{\eta})| 
			+ \left|\frac{\partial \theta}{\partial F_{i\alpha}}(\bar{F},\bar{\eta})\right| |F_{i\alpha}-\bar{F}_{i\alpha}|
			+ \left|\frac{\partial \theta}{\partial \eta}(\bar{F},\bar{\eta})\right| |\eta-\bar{\eta}|\\
			&\lesssim |\theta(F,\eta)| +  |F-\bar{F}| +  |\eta-\bar{\eta}| + 1\\
			&\lesssim |F|^{p\frac{q-1}{q}}+|\eta|^{q-1}+  |F-\bar{F}| +  |\eta-\bar{\eta}|+ 1\\
			&\lesssim 2^{p\frac{q-1}{q}-1} (|F-\bar{F}|^{p\frac{q-1}{q}}+|\bar{F}|^{p\frac{q-1}{q}}) + 2^{q-2} (|\eta-\bar{\eta}|^{q-1}+|\bar{\eta}|^{q-1})\\
			&\qquad\qquad\qquad\qquad\qquad\qquad\qquad
			+  |F-\bar{F}| +  |\eta-\bar{\eta}| + 1\\
			&\lesssim |F-\bar{F}|^p+|\eta-\bar{\eta}|^q +  |F-\bar{F}| +  |\eta-\bar{\eta}| + 1\\
			&\lesssim |F-\bar{F}|^p +  |\eta-\bar{\eta}|^q +  |F-\bar{F}| + |\eta-\bar{\eta}| \\
			&\lesssim |F-\bar{F}|^p +  |\eta-\bar{\eta}|^q +  (|F-\bar{F}| + |\eta-\bar{\eta}|)^2 \\
			&\lesssim |F-\bar{F}|^p +  |\eta-\bar{\eta}|^q + |F-\bar{F}|^2 + |\eta-\bar{\eta}|^2 \\
			&\leq C \left(|V_p(F-\bar{F})|^2+|V_q(\eta-\bar{\eta})|^2\right),
		\end{align*}
		because of $(H_3)_2,$ Minkowski's inequality and Young's inequality.
		
		Bound \eqref{bound_rel.3} can be obtained similarly by employing $(H_3)_1$, as now the function $\Sigma(F,\eta)$
		is given by \eqref{e:constitutive_functions}$_2$ as a partial derivative of $e(F,\eta)$. 
		
		\textcolor{black}{
			Finally for \eqref{bound_rel.4} in the region where $|F-\bar{F}|+|\eta-\bar{\eta}|\leq1$ and $(\bar{F},\bar{\eta})\in\Gamma_{K}$ we have
			\begin{align*}
				\Bigg|(\theta(F,\eta)-\theta(\bar{F},\bar{\eta}))\left(\frac{r}{\theta(F,\eta)}-\frac{\bar{r}}{\theta(\bar{F},\bar{\eta})}\right)\Bigg| &\lesssim
				\frac{|\theta(F,\eta)-\theta(\bar{F},\bar{\eta})|^2}{\theta(F,\eta)\theta(\bar{F},\bar{\eta})}\\
				&\lesssim |\theta(F,\eta)-\theta(\bar{F},\bar{\eta})|^2\\
				&\lesssim |F-\bar{F}|^2+|\eta-\bar{\eta}|^2,
			\end{align*}
			where the constants involved depend on $(||r||_{L^{\infty}},K).$ If $|F-\bar{F}|+|\eta-\bar{\eta}|>1,$ given \eqref{lowerb} it holds that
			\begin{align*}
				\Bigg|(\theta(F,\eta)-\theta(\bar{F},\bar{\eta}))&\left(\frac{r}{\theta(F,\eta)}-\frac{\bar{r}}{\theta(\bar{F},\bar{\eta})}\right)\Bigg| \lesssim |\theta(F,\eta)-\theta(\bar{F},\bar{\eta})|\lesssim |\theta(F,\eta)| + 1,
			\end{align*}
			for all $(\bar{F},\bar{\eta})\in\Gamma_{K}$ and 
			for a constant depending on $(||r||_{L^{\infty}},\delta,K).$ Then we can proceed as above to bound the term $|\theta(F,\eta)|.$ This concludes the proof.}
	\end{proof}
	
	In Section 6, Theorem \ref{theorem:3Garding}, we prove the following G\aa rding-type inequality 
	\begin{align}
		\label{thm2.Garding}
		\int |V_p(\nabla\phi)|^2+|V_q(\psi)|^2 \leq C_0\int e(\baf+\nabla\phi,\bai+\psi|\baf,\bai) + C_1\int |V_p(\phi)|^2,
	\end{align}
	where $ C_0= C_0(e,K)>0$, $ C_1= C_1(e,K)>0$ and $\psi\in L^q(Q)$ with $\int\psi=0$ and $\phi\in W^{1,p}_0(Q).$ 
	We will use \eqref{thm2.Garding} for $(\baf,\bai)$ a classical (Lipschitz) solution to \eqref{e:adiabatic_thermoelasticity} to prove the measure-valued versus strong uniqueness result. 
	Hence, the proof of Theorem \ref{Uniqueness_quasiconvex},relies on Theorems \ref{thm:main} and \ref{theorem:3Garding},  and we 
	refer the reader to those statements for the proof of \eqref{thm2.Garding}.
	
	\begin{Lemma}\label{lemma2}
		Suppose that $(\nu,\bg,F,v,\eta)$ is a dissipative measure-valued solution to adiabatic thermoelasticity according to Definition \ref{MVS_adiabatic_theromelasticity} and that $(\bar{F},\bar{v},\bar{\eta})$ is a classical solution to \eqref{e:adiabatic_thermoelasticity}
		with initial data ($F^0,v^0,\eta^0)$ and $(\bar{F}^0,\bar{v}^0,\bar{\eta}^0)$ respectively. Under the assumptions of Theorem $3$ and by
		denoting $\nu_0=\nu_{t_0,x}$,  it holds that
		\begin{align}
			\label{lemma2.bound1}
			\begin{split}
				\int \la \nu_0, |V_p(\lf-\bar{F}(t_0,x))|^2&+|V_q(\li-\bar{\eta}(t_0,x))|^2 \ra \:dx\\
				&\leq\tilde C_0\int \la \nu_0,e(\lf,\li|\bar{F}(t_0,x),\bar{\eta}(t_0,x))\ra \:dx\\
				&\qquad\qquad
				+\tilde C_1\int|V_p(y(t_0,x)-\bar y(t_0,x))|^2  \:dx,
			\end{split}
		\end{align}
		and 
		\begin{align}
			\label{lemma2.bound2}
			\begin{split}
				\int \la \nu_0,& |V_p(\lf-\bar{F}(t_0,x))|^2+|V_q(\li-\bar{\eta}(t_0,x))|^2 + |\lv-\bar{v}(t_0,x)|^2\ra \:dx\\
				&\leq\tilde C_0\int \la \nu_0,I(F,v,\eta|\bar{F},\bar{v},\bar{\eta};(t_0,x))\ra \:dx\\
				&\qquad\qquad
				+\tilde C_1\int|V_p(y(t_0,x)-\bar y(t_0,x))|^2  \:dx,
			\end{split}
		\end{align}
		for almost all $t_0\in(0,T).$ In addition, at $t=0$ 
		\begin{align}
			\label{lemma2.bound3}
			\int I(F^0,v^0,\eta^0|\bar{F}^0,\bar{v}^0,\bar{\eta}^0) \: 
			\leq\tilde c \int |v^0-\bar{v}^0|^2 +  |V_p(F^0-\bar{F}^0)|^2+|V_q(\eta^0-\bar{\eta}^0)|^2\:.
		\end{align}
	\end{Lemma}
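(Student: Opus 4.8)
The plan is to prove the three bounds \eqref{lemma2.bound1}, \eqref{lemma2.bound2}, \eqref{lemma2.bound3} by combining the G\aa rding-type inequality \eqref{thm2.Garding} with the pointwise relative-entropy bounds of Lemma~\ref{Lemma1}, after a suitable localisation in time of the measure-valued solution. The key technical point is that \eqref{thm2.Garding} is stated for a \emph{single} pair of test functions $(\phi,\psi)$ with $\phi\in W^{1,p}_0(Q)$ and $\psi\in L^q(Q)$ satisfying $\int\psi=0$, whereas here we must apply it to a Young measure $\nu_0$. First I would fix a point in time $t_0$ for which the measure-valued solution admits a good slice $\nu_{t_0}$ (this is where I invoke the time-localisation results announced as the content of Section~5; they guarantee that for a.e.\ $t_0$ the slice is itself generated by a sequence of functions with the correct integrability and with $\nabla$-structure in the first component). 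Along such a generating sequence $(\nabla y^\varepsilon(t_0,\cdot), v^\varepsilon(t_0,\cdot),\eta^\varepsilon(t_0,\cdot))$ I would set $\phi^\varepsilon:=y^\varepsilon(t_0,\cdot)-\bar y(t_0,\cdot)$, which does \emph{not} vanish on $\partial Q$; but since we work on the torus $Q$, the condition $\phi\in W^{1,p}_0(Q)$ in \eqref{thm2.Garding} is really the periodicity condition, so this is harmless, and for $\psi^\varepsilon$ one must subtract the spatial mean $\overline{\psi^\varepsilon}:=\int(\eta^\varepsilon(t_0,\cdot)-\bar\eta(t_0,\cdot))$ to meet the zero-average constraint, accounting for the mean separately via the one-dimensional convexity inequality for $e$ in the $\eta$-slot stated in Remark~\ref{remark:Aqc}.

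Second, I would pass \eqref{thm2.Garding} to the limit $\varepsilon\to0$. The left-hand side $\int |V_p(\nabla\phi^\varepsilon)|^2+|V_q(\psi^\varepsilon)|^2$ converges, by the Fundamental Theorem of Young measures, to $\int\la\nu_{t_0,x},|V_p(\lf-\bar F)|^2+|V_q(\li-\bar\eta)|^2\ra\,dx$ up to the harmless mean-correction term (and up to possible concentration mass, which only helps, since it appears with a favourable sign on the left after using the lower growth $(H_2)$ — here the assumption $\bg_0=0$ enters to rule out concentrations at the initial time in the case $t_0=0$). On the right-hand side, the term $\int e(\bar F+\nabla\phi^\varepsilon,\bar\eta+\psi^\varepsilon|\bar F,\bar\eta)$ converges to $\int\la\nu_{t_0,x},e(\lf,\li|\bar F,\bar\eta)\ra\,dx$ — this is legitimate because $e(\cdot|\bar F,\bar\eta)$ has exactly $(p,q)$-growth by $(H_2)$ and Lemma~\ref{Lemma1}, so it is admissible as a test function against the generalised Young measure, and again any concentration contributes with the right sign — while $\int|V_p(\phi^\varepsilon)|^2=\int|V_p(y^\varepsilon(t_0)-\bar y(t_0))|^2\to\int|V_p(y(t_0)-\bar y(t_0))|^2$ by strong $L^p$ convergence of $y^\varepsilon(t_0,\cdot)$ (which follows from the $L^\infty_t W^{1,p}_x$ and $\partial_t\nabla y^\varepsilon\in L^\infty_t H^{-1}_x$ bounds in \eqref{H-1gen} via Aubin--Lions, giving compactness of $y^\varepsilon$ in $C_tL^p_x$ after possibly passing to a subsequence). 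This yields \eqref{lemma2.bound1}.

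Third, \eqref{lemma2.bound2} follows from \eqref{lemma2.bound1} simply by adding the velocity contribution: from \eqref{def.I}, $I(\lf,\lv,\li|\bar F,\bar v,\bar\eta)=\tfrac12|\lv-\bar v|^2+e(\lf,\li|\bar F,\bar\eta)$, so $\int\la\nu_0,|\lv-\bar v|^2\ra=2\int\la\nu_0,I\ra-2\int\la\nu_0,e(\lf,\li|\bar F,\bar\eta)\ra$, and combining with \eqref{lemma2.bound1} (using $\int\la\nu_0,e(\cdots)\ra\ge0$ by quasiconvexity, so it can be absorbed) gives the claimed estimate with updated constants $\tilde C_0,\tilde C_1$. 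Finally, \eqref{lemma2.bound3} is purely pointwise: it is an immediate consequence of \eqref{bound_rel.1} of Lemma~\ref{Lemma1} integrated over $Q$, provided $(\bar F^0,\bar v^0,\bar\eta^0)\in\Gamma_K$, which holds since $(\bar F,\bar v,\bar\eta)$ is a classical solution hence bounded; note the direction of \eqref{lemma2.bound3} is the opposite (upper bound on $I$) to \eqref{lemma2.bound1}--\eqref{lemma2.bound2} and needs no G\aa rding inequality at all.

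The main obstacle I anticipate is the interchange of limits and averages in the middle step: justifying that \eqref{thm2.Garding}, an inequality for deterministic functions, survives passage to the Young-measure limit requires controlling (i) the lack of equi-integrability — resolved by working with the generalised (DiPerna--Majda / Alibert--Bouchitt\'e) Young measure and checking the recession functions of $|V_p|^2$, $|V_q|^2$ and $e(\cdot|\bar F,\bar\eta)$ are well-defined and have the right signs, so concentration terms never spoil the inequality; and (ii) the mean-zero normalisation of $\psi^\varepsilon$, handled by the splitting in Remark~\ref{remark:Aqc} together with the elementary bound $|\overline{\psi^\varepsilon}|\lesssim\|\eta^\varepsilon(t_0)-\bar\eta(t_0)\|_{L^q}$, whose square is itself controlled by the right-hand side of the target inequality. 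A secondary subtlety is ensuring the chosen time $t_0$ is simultaneously a Lebesgue point of the relevant time-integrals and a point where the slice $\nu_{t_0}$ is a genuine generated Young measure — both hold for a.e.\ $t_0$ by the localisation lemmas of Section~5, which is exactly why the statement is phrased "for almost all $t_0\in(0,T)$".
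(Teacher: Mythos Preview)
Your overall strategy---localise in time, apply the G\aa rding inequality \eqref{thm2.Garding} to a generating sequence for the slice $\nu_{t_0}$, pass to the limit, then add the velocity term for \eqref{lemma2.bound2} and quote \eqref{bound_rel.1} for \eqref{lemma2.bound3}---is exactly the paper's route, and your handling of \eqref{lemma2.bound2} and \eqref{lemma2.bound3} is correct.

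The gap is in the limit passage for \eqref{lemma2.bound1}. Your claim that ``any concentration contributes with the right sign'' on the right-hand side is false. Concentration in $\int e(\bar F+\nabla\phi^\varepsilon,\bar\eta+\psi^\varepsilon\,|\,\bar F,\bar\eta)$ is nonnegative (by the lower growth in $(H_2)$), so after passing to the limit the right-hand side becomes $C_0\int\langle\nu_{t_0,x},e(\lf,\li|\bar F,\bar\eta)\rangle\,dx$ \emph{plus} a nonnegative concentration term, and you cannot drop the latter to arrive at \eqref{lemma2.bound1}. The same objection applies if you try to cancel the concentrations on the two sides: the recession functions of $|V_p|^2+|V_q|^2$ and of $e(\cdot\,|\,\bar F,\bar\eta)$ are comparable but not related by the G\aa rding constant $C_0$. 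The paper sidesteps this entirely: Theorem~\ref{Theorem:1localisation} does not merely produce \emph{some} generating sequence for the slice, it produces one $(\nabla z_k,w_k)$ with the additional property that $(|\nabla z_k|^p+|w_k|^q)$ is weakly relatively compact in $L^1$ (equiintegrable). With this sequence there are no concentrations at all, so both sides of \eqref{thm2.Garding} converge exactly to their Young-measure averages and the inequality survives. You invoked Section~5 only for the existence of a slice-generating sequence with gradient structure; the crucial output you missed is property~(2), the equiintegrability, which is the whole point of the decomposition construction in that theorem. Once you use $(\nabla z_k,w_k)$ instead of your original sequence, the mean-zero normalisation and the Aubin--Lions argument also become unnecessary: $w_k$ is built with zero spatial average, and the strong convergence $z_k\to y(t_0,\cdot)$ in $L^p$ is part of the statement of Theorem~\ref{Theorem:1localisation}.
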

	
	\begin{proof}
		We prove \eqref{lemma2.bound1} as a result of Theorems  \ref{Theorem:1localisation} and \ref{theorem:3Garding}.
		In \eqref{thm2.Garding} take $\bar F=\bar F(t_0,\cdot)$ and $\bar \eta=\bar \eta(t_0,\cdot)$ and then choose $\phi=z_k(t,\cdot)-\bar 
		y(t_0,\cdot)$ and $\psi=w_k(t,\cdot)-\bar\eta(t_0,\cdot),$ as in Theorem $1.$ Observe in this case, that the relative quantity 
		$e(\baf+\nabla\phi,\bai+\psi|\baf,\bai)$ becomes $e(\nabla z_k,w_k|\bar{F}(t_0,x),\bar{\eta}(t_0,x)).$ Integrating the resulting inequality in 
		time and since $(\nabla z_k,w_k)$ generates the measure $(\nu_{t_0,x})_{x\in Q},$ $(|\nabla z_k|^p+|w_k|^q)$ is weakly relatively compact 
		in $L^1(Q_T)$ and $z_k\to y(t_0,\cdot)$ strongly in $L^p(Q)$, we obtain in the limit as $k\to\infty,$ inequality \eqref{lemma2.bound1}.
		
		For \eqref{lemma2.bound2}, we exploit inequality \eqref{lemma2.bound1}, together with the fact that the relative entropy $I$ is given 
		as a sum in \eqref{def.I}.  Let $(F^k,v^k,\eta^k)$ be a generating sequense satisfying 
		\begin{equation*}
			F^{k}\in L^{\infty}(0,T;L^p(Q)) ,\quad v^{k}\in L^{\infty}(0,T;L^2(Q)), \quad \eta^{k}\in L^{\infty}(0,T;L^q(Q)),
		\end{equation*}
		and consider a function $g:Q_T\times\mathbb{R}^{d\times d}\times\mathbb{R}^{d}\times\mathbb{R}$ such that
		$$g(t,x,F,v,\eta)=g_{v}(t,x,v)+g_{F,\eta}(t,x,F,\eta),$$
		where 
		\begin{align*}
			|g_{v}|\leq c (1+|v|^2)  \quad \text{and} \quad  |g_{F,\eta}|\leq c (1+|F|^p+|\eta|^q).
		\end{align*}
		The action of the generated measure $\nu_{t,x}$ is equivalent with the action of  $\nu^v \otimes \nu^{F,\eta}$ for the measures
		$\nu^v$ and $\nu^{F,\eta}$ are generated by the sequenses $(v^k)$ and $(F^k,\eta^k)$ respectively. Therefore, it suffices to add the 
		term
		$$\int\la \nu_0^v,|v-\bar{v}(t_0,x))|^2\ra \:dx$$
		to inequality \eqref{thm2.Garding}. Finally, \eqref{lemma2.bound3} follows directly from Lemma \ref{Lemma1} and in particular bound 
		\eqref{bound_rel.1}.
	\end{proof}
	
	Using the averaged relative entropy inequality \eqref{rel.en.id.mv}, we are now in a position to prove that in the presence of a classical 
	solution, given that the associated Young measure is initially a Dirac mass, the dissipative measure-valued solution must coincide with the 
	classical one.
	
	\begin{Theorem}\label{Uniqueness_quasiconvex}
		Let $\bar{U}$ be a Lipschitz bounded solution of (\ref{e:adiabatic_thermoelasticity}),(\ref{e:entropy.prod}) with initial data $\bar{U}^0$ and 
		$(\nu_{t,x},\bg,U)$ be a dissipative measure-valued solution  satisfying (\ref{e:mv.sol.xi-v-eta}),(\ref{e:mv.sol.Fvt.energy}), with initial data 
		also $\bar{U}^0,$ both under the constitutive assumptions (\ref{e:constitutive_functions}) and such that 
		$r(t,x)=\bar{r}(t,x)\in L^{\infty}(Q_T)$. Suppose that $e$  is quasiconvex according to Definition \ref{defqcc} and the hypotheses 
		$(H_1)-(H_3)$ hold for $p,\,q\geq 2$, together with \eqref{lowerb}. If $\nu_{0,x}=\delta_{\bar{U}^0(x)}$ and $\bg_0=0,$  we have that 
		$\nu_{t,x}=\delta_{\bar{U}}$ and $U=\bar U$ a.e. on $Q_T.$
	\end{Theorem}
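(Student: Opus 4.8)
The plan is to run a Gr\"onwall argument on the averaged relative entropy functional. Define
\[
\mathcal{E}(t) := \int_Q \la \nu_{t,x}, I(\lambda_U|\bar U(t,x))\ra\,dx + \bg(t),
\]
where $\bg(t)$ denotes the time-slice of the concentration measure (made rigorous via the localisation results of Section~5). First I would pass from the integral-in-$\varphi$ form of the relative entropy inequality \eqref{rel.en.id.mv} to a differential-in-time inequality for $\mathcal{E}(t)$, choosing $\varphi$ to approximate the indicator of $(0,t_0)$; since the initial Young measure is the Dirac mass $\delta_{\bar U^0}$ and $\bg_0 = 0$, the boundary term at $t=0$ vanishes, i.e. $\mathcal{E}(0) = 0$. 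This is where the localisation-in-time results quoted from Section~5 are needed to make sense of $\mathcal{E}$ for a.e. $t_0$ and to handle the concentration measure's time slices.

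Next I would estimate the right-hand side of \eqref{rel.en.id.mv}. The integrand there consists of three pieces: $\partial_t\bar\eta\,\la\nu_{t,x},\theta(\lambda_F,\lambda_\eta|\bar F,\bar\eta)\ra$, $\partial_t\bar F_{j\beta}\,\la\nu_{t,x},\Sigma_{j\beta}(\lambda_F,\lambda_\eta|\bar F,\bar\eta)\ra$, and the radiative term $\la\nu_{t,x},(\theta(\lambda_F,\lambda_\eta)-\theta(\bar F,\bar\eta))(\tfrac{r}{\theta(\lambda_F,\lambda_\eta)}-\tfrac{\bar r}{\theta(\bar F,\bar\eta)})\ra$. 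Since $\bar U$ is Lipschitz and bounded, $\partial_t\bar\eta$ and $\partial_t\bar F$ are in $L^\infty$, and $\bar U(t,\cdot) \in \Gamma_K$ for a suitable $K$; hence Lemma~\ref{Lemma1}, specifically \eqref{bound_rel.2}, \eqref{bound_rel.3} and \eqref{bound_rel.4}, bounds each of these pointwise by $C\la\nu_{t,x},|V_p(\lambda_F-\bar F)|^2+|V_q(\lambda_\eta-\bar\eta)|^2\ra$. Therefore the whole right-hand side is controlled by
\[
C\int_0^{t_0}\!\!\int_Q \la\nu_{t,x},|V_p(\lambda_F-\bar F)|^2+|V_q(\lambda_\eta-\bar\eta)|^2\ra\,dx\,dt.
\]
The crucial point is that this is \emph{not} yet directly comparable to $\mathcal{E}$, because $I$ only controls the full $V_p, V_q$ quantities in one direction (the easy bound \eqref{bound_rel.1}), and quasiconvexity — not convexity — is all we have. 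Here I would invoke Lemma~\ref{lemma2}, in particular \eqref{lemma2.bound2}: it gives
\[
\int_Q\la\nu_{t,x},|V_p(\lambda_F-\bar F)|^2+|V_q(\lambda_\eta-\bar\eta)|^2+|\lambda_v-\bar v|^2\ra\,dx
\le \tilde C_0\,\mathcal{E}(t) + \tilde C_1\int_Q |V_p(y(t,x)-\bar y(t,x))|^2\,dx.
\]
The remaining lower-order term $\int_Q|V_p(y-\bar y)|^2$ must then itself be absorbed: since $V_p(y-\bar y)^2 \sim |y-\bar y|^2 + |y-\bar y|^p$, and $y-\bar y = \int_0^t (v-\bar v)\,ds$ with $\partial_t(y-\bar y) = v - \bar v$ (using \eqref{e:regulatity.xi-v-eta} and \eqref{e:mv.sol.xi-v-eta}$_1$), one controls $\|y(t)-\bar y(t)\|_{L^2}^2$ and, via Poincar\'e/Sobolev together with the uniform $L^\infty_t L^p_x$ bound on $F - \bar F$ interpolated against the $L^2$ control, also the $L^p$ piece, by $\int_0^t\int_Q\la\nu,|\lambda_v-\bar v|^2\ra\,dx\,ds \le \int_0^t \tilde C_0\mathcal{E}(s)\,ds + \dots$; this closes the loop. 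Combining, one arrives at
\[
\mathcal{E}(t_0) \le C\int_0^{t_0}\Big(\mathcal{E}(t) + \int_0^t \mathcal{E}(s)\,ds\Big)dt \le C'\int_0^{t_0}\mathcal{E}(t)\,dt,
\]
so Gr\"onwall's inequality with $\mathcal{E}(0)=0$ forces $\mathcal{E}(t_0) = 0$ for a.e. $t_0 \in (0,T)$.

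Finally, from $\mathcal{E}(t_0) = 0$ I would conclude: $\bg \equiv 0$ on $Q_T$, and $\int_Q\la\nu_{t,x},I(\lambda_U|\bar U)\ra\,dx = 0$ for a.e. $t_0$. Since $I(\lambda_U|\bar U) \ge 0$ (it is $\tfrac12|\lambda_v-\bar v|^2 + e(\lambda_F,\lambda_\eta|\bar F,\bar\eta)$, and the relative internal energy is nonnegative by quasiconvexity applied at a point, or directly by \eqref{lemma2.bound2}'s left side being nonnegative), we get $\la\nu_{t,x},I(\lambda_U|\bar U)\ra = 0$ a.e.; and because \eqref{lemma2.bound2} shows $I$ controls $|V_p(\lambda_F-\bar F)|^2 + |V_q(\lambda_\eta - \bar\eta)|^2 + |\lambda_v-\bar v|^2$ from below after integration, the measure $\nu_{t,x}$ must be concentrated at $(\bar F(t,x),\bar v(t,x),\bar\eta(t,x))$, i.e. $\nu_{t,x} = \delta_{\bar U(t,x)}$, whence $U = \bar U$ a.e. The main obstacle, and the reason the paper needs the whole machinery of Sections~5 and~6, is precisely the step of absorbing the $V_p,V_q$ dissipation terms back into the relative entropy: quasiconvexity only gives an integrated (Gr\aa rding-type) inequality with a lower-order remainder, so one cannot argue pointwise in $x$ as in the convex case, and the localisation-in-time lemmas are needed to legitimately apply the Gr\aa rding inequality slice-by-slice to the measure-valued solution and to handle the concentration measure.
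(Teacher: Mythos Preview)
Your overall strategy matches the paper's: test the relative entropy inequality \eqref{rel.en.id.mv} with an approximation of $\mathbbm{1}_{(0,t_0)}$, bound the right-hand side via Lemma~\ref{Lemma1}, invoke Lemma~\ref{lemma2} (and through it the G\aa rding inequality and time-localisation) to pass from the averaged relative entropy to control of the $V_p,V_q$ quantities, absorb the lower-order term $\int_Q|V_p(y-\bar y)|^2$, and close with Gr\"onwall. Two points of your execution differ from the paper's and deserve care.

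First, you include a time-slice $\gamma(t)$ of the concentration measure in $\mathcal{E}(t)$, appealing to Section~5 to justify it. Section~5, however, localises only the Young measure $\nu$, not $\gamma$; a general nonnegative Radon measure on $Q_T$ need not admit a useful time-disintegration. The paper sidesteps this entirely: since the chosen test function has $\varphi'\leq 0$ and $\gamma\geq 0$, the contribution $\int\varphi'\,d\gamma$ has a sign and is simply dropped after sending $\varepsilon\to 0^+$, yielding \eqref{thm.estimate0} with no $\gamma$ present. The Gr\"onwall is then run on $\int_Q\big(\langle\nu,|\lambda_v-\bar v|^2+|V_p(\lambda_F-\bar F)|^2+|V_q(\lambda_\eta-\bar\eta)|^2\rangle+|V_p(y-\bar y)|^2\big)\,dx$ rather than on the relative entropy itself.

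Second, your treatment of the lower-order term is different and only sketched. You propose to control $\|y-\bar y\|_{L^p}^p$ by writing $y-\bar y=\int_0^t(v-\bar v)\,ds$ for the $L^2$ piece and interpolating against the uniform $L^\infty_tL^p_x$ bound on $F-\bar F$; the interpolation exponents need checking, particularly for $2\leq p<3$ in $d=3$ where $W^{1,p}\not\hookrightarrow L^\infty$. The paper instead quotes an estimate from \cite{KS2017} (see also \cite{KV2020}) obtained via elliptic estimates and equation~\eqref{e:adiabatic_thermoelasticity}$_1$:
\[
\int_Q|V_p(y-\bar y)|^2\,dx\leq C\int_0^t\!\!\int_Q\Big(\big\langle\nu,|V_p(\lambda_F-\bar F)|^2+|\lambda_v-\bar v|^2\big\rangle+|V_p(y-\bar y)|^2\Big)\,dx\,d\tau,
\]
which closes directly once added to \eqref{thm.estimate}.
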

	\begin{proof}
		Let $\{\varphi_n\}$ be a sequence of monotone decreasing functions such that $\varphi_n\geq 0,$ for all $n\in\mathbb{N},$ converging as $n \to \infty$ to the Lipschitz function
		\begin{align*}
			\varphi(\tau)=\begin{dcases}
				1 & 0\leq\tau\leq t\\
				\frac{t-\tau}{\varepsilon}+1 & t\leq\tau\leq t+\varepsilon\\
				0 & \tau\geq t+\varepsilon
			\end{dcases}
		\end{align*}
		for some $\varepsilon>0.$ Writing the relative entropy inequality~\eqref{rel.en.id.mv} for $r(t,x)=\bar{r}(t,x),$ tested against the functions $\varphi_n$ we have 
		\begin{align*}
			\int&\varphi_n(0)[\left\la\nu_{0,x},I(\lambda_{U_0}|\bar{U_0})\right\ra\:dx]\\
			&+\int_{0}^{T}\int\varphi_n'(t)\left[\left\la\nu_{t,x},I(\lambda_{U}|\bar{U})\right\ra\:dx\:dt+\bg(dx\,dt)\right]\\
			&\geq-\int_{0}^{T}\int\varphi_n(t)\Bigg[\partial_t\bar{\eta}\left\la\nu_{t,x},\theta(\lf,\li|\bar{F},\bar{\eta})\right\ra
			+\partial_t\bar{F}_{j\beta}\left\la\nu_{t,x},\Sigma_{j\beta}(\lf,\li|\bar{F},\bar{\eta})\right\ra\\
			&\qquad\qquad\qquad\qquad
			+\left\la\nu_{t,x},(\theta(\lf,\li)-\theta(\bar{F},\bar{\eta}))\left(\frac{\bar r}{\theta(\lf,\li)}-\frac{\bar{r}}{\theta(\bar{F},\bar{\eta})}\right)\right\ra
			\Bigg]\:dx\,dt.
		\end{align*}
		Passing to the limit as $n\to \infty$ we get
		\begin{align*}
			\int &\left\la\nu_{0,x},I(\lf,\lv,\li|\bar{F},\bar{v},\bar{\eta})\right\ra(0,x)\:dx\\
			&\quad
			-\frac{1}{\varepsilon}\int_{t}^{t+\varepsilon}
			\int \left[\left\la\nu_{t,x},I(\lf,\lv,\li|\bar{F},\bar{v},\bar{\eta})\right\ra\:dx\:d\tau+\bg(dxd\tau)\right]  \\ 
			&\geq- \int_0^{t+\varepsilon}\!\!\!\int \!\Big[\partial_t\bar{\eta}\left\la\nu_{t,x},\theta(\lf,\li|\bar{F},\bar{\eta})\right\ra
			+\partial_t\bar{F}_{j\beta}\left\la\nu_{t,x},\Sigma_{j\beta}(\lf,\li|\bar{F},\bar{\eta})\right\ra\\
			&\qquad\qquad\qquad\quad
			+\left\la\nu_{t,x},(\theta(\lf,\li)-\theta(\bar{F},\bar{\eta}))\left(\frac{\bar r}{\theta(\lf,\li)}-\frac{\bar{r}}{\theta(\bar{F},\bar{\eta})}\right)\right\ra\Big]\:dx\,d\tau,
		\end{align*}
		and using the estimates \eqref{bound_rel.2},  \eqref{bound_rel.3}, and  \eqref{bound_rel.4} we arrive at
		\begin{align*}
			\int& \left\la\nu_{0,x},I(\lf,\lv,\li|\bar{F},\bar{v},\bar{\eta})\right\ra(0,x)\:dx\\
			&\quad-\frac{1}{\varepsilon}\int_{t}^{t+\varepsilon}
			\int \left[\left\la\nu_{t,x},I(\lf,\lv,\li|\bar{F},\bar{v},\bar{\eta})\right\ra\:dx\:d\tau+\bg(dxd\tau)\right] \\
			&\geq- C \int_0^{t+\varepsilon}\!\!\!\int \left\la\nu_{t,x},|V_p(\lf-\bar{F})|^2+|V_q(\li-\bar{\eta})|^2\right\ra\:dx\,d\tau.
		\end{align*}
		Passing now to the limit as $\varepsilon \to 0^{+}$ and using the fact that $\bg\geq 0$ we get 
		\begin{align}\label{thm.estimate0}
			\int [\la\nu_{t,x},I(\lf,\lv,&\li|\bar{F},\bar{v},\bar{\eta})\ra\:dx
			\leq \int \left\la\nu_{0,x},I(\lf,\lv,\li|\bar{F},\bar{v},\bar{\eta})\right\ra(0,x)\:dx\nonumber\\
			&+C \int_0^t \int \left\la\nu_{t,x},|V_p(\lf-\bar{F})|^2+|V_q(\li-\bar{\eta})|^2\right\ra\:dx\,d\tau,
		\end{align}
		which together with \eqref{bound_rel.1}, \eqref{lemma2.bound2}, and \eqref{lemma2.bound3} yields
		\begin{align}
			\label{thm.estimate}
			\begin{split}
				\int \la\nu_{t,x},|\lv-\bar{v}|^2+&|V_p(\lf-\bar{F})|^2+|V_q(\li-\bar{\eta})|^2\ra\:dx\\ 
				&\leq C \int_0^t \int \left\la\nu_{t,x},|V_p(\lf-\bar{F})|^2+|V_q(\li-\bar{\eta})|^2\right\ra\:dx\,d\tau\\
				&\qquad\qquad
				+ C \int |V_p(y-\bar y)|^2 \:dx,
			\end{split}
		\end{align}
		for a.e. $t\in (0,T).$ Here we used the assumptions that the two solutions have the same initial data and that $\bg_0=0.$ Note that the constant $C$ depends only on the smooth bounded solution $\bar{U}.$ 
		To apply Gr\"onwall's inequality and close our argument it remains to estimate the last term on the right hand-side of \eqref{thm.estimate}.
		This was done in \cite{KS2017}, using elliptic estimates and equation \eqref{e:adiabatic_thermoelasticity}$_1$, together with
		\eqref{constraint}. We also refer the reader to \cite[Remark 6]{KV2020} for the derivation of such lower order estimates in a more 
		general setting. In particular, we have
		\begin{align*}
			\int |V_p(y-\bar y)|^2 \:dx\leq C \int_0^t \int &\left\la\nu_{t,x},|V_p(\lf-\bar{F})|^2+|\lv-\bar{v}|^2\right\ra\:dx\,d\tau\\
			&\qquad\qquad
			+ C \int_0^t \int V_p(y-\bar y)|^2\:dx\,d\tau
		\end{align*}
		and adding the term $\int |V_p(y-\bar y)|^2 \:dx$
		on both sides of \eqref{thm.estimate} we arrive at 
		\begin{align*} 
			\int \big(\la\nu_{t,x},&|\lv-\bar{v}|^2+|V_p(\lf-\bar{F})|^2+|V_q(\li-\bar{\eta})|^2\ra + |V_p(y-\bar y)|^2\big) \:dx\\ 
			&\leq C \int_0^t \int \big(\left\la\nu_{t,x},|V_p(\lf-\bar{F})|^2+|V_q(\li-\bar{\eta})|^2\right\ra + |V_p(y-\bar y)|^2\big) \:dx\,d\tau.
		\end{align*}
		Gr\"onwall's inequality completes the proof.
	\end{proof}
	\begin{Remark}
		The radiative heat supply $r(t,x)$ is a field that can be regulated externally. Therefore, one could think instead the theory of
		thermoelasticity with zero radiative heat supply and prove Theorem \ref{Uniqueness_quasiconvex} in this less general setting.
		In the case $r(t,x)=\bar{r}(t,x)=0,$ the result of Theorem \ref{Uniqueness_quasiconvex} holds without the assumption on the 
		temperature \eqref{lowerb},	and bound \eqref{bound_rel.4}.
	\end{Remark}	
	
	\section{Localisation in time}
	
	In Theorem \ref{Uniqueness_quasiconvex} we are required to localise our measure-valued solution in time and the generating sequences for these localised measures must be given by a proper time modification of the generating sequence for $\nu$. However, due to the lack of equiintegrability of the assumed generating sequence, we need to construct a new sequence which lies on the desired wave cone, has suitable equiintegrability and convergence properties and generates the localised measure $(\nu_{t_0,x})_{x\in Q}$. To this end, we present the time-dependent decomposition lemma below which is not needed if, instead of measure-valued solutions, weak solutions are considered.
	\begin{Theorem}\label{Theorem:1localisation}
		Let $\nu=(\nu_{t,x})_{(t,x)\in Q_T}$ be a family of probability measures generated by a sequence $(\nabla y_k,\eta_k)$ such that 
		\begin{align*}
			&(y_k)\hspace{0.2cm}\text{is bounded in}\hspace{0.2cm}L^\infty(0,T;W^{1,p}(Q))\\
			&(\partial_t\nabla y_k) \hspace{0.2cm}\text{is bounded in}\hspace{0.2cm}L^\infty(0,T;H^{-1}(Q))\\
			&(\eta_k)\hspace{0.2cm}\text{is bounded in}\hspace{0.2cm}L^\infty(0,T;L^{q}(Q)),
		\end{align*}
		and write $(\nabla y,\eta)=\langle\nu,\rm{id}\rangle$ for its centre of mass. Then, for almost all $t_0\in(0,T)$, there exists a sequence $(\nabla z_k,w_k)$ also bounded in $L^\infty(0,T;L^{p}(Q))\times L^\infty(0,T;L^{q}(Q))$ with the following properties 
		\begin{enumerate}
			\item[(1)] $(\nabla z_k,w_k)$ generates the measure $(\nu_{t_0,x})_{x\in Q}$ as a p-q-Young measure;
			\item[(2)] $(|\nabla z_k|^p+|w_k|^q)$ is weakly relatively compact in $L^1(Q_T)$;
			\item[(3)] $z_k\to y(t_0,\cdot)$ strongly in $L^p(Q_T)$.
		\end{enumerate}
		\begin{proof}
			For $t_0\in (0,T)$ define
			\begin{align*}
				y^{k,\eps}(t,x):=y_k(t_0+\eps t/T,x),\,\,\,\eta^{k,\eps}(t,x):=\eta_k(t_0+\eps t/T,x).
			\end{align*}
			We claim that for a.e. $t_0$ an appropriate subsequence of $(\eps_k)$ can be chosen such that $(\nabla y^{k,\eps},\eta^{k,\eps})$ generates the measure $(\nu_{t_0,x})_{x\in Q}$ and that $y^{k,\eps}\to y(t_0,\cdot)$ in $L^p(Q_T)$. To this end, note that, up to a subsequence which is not relabelled, for any $g\in C_{p,q}(\mathbb{R}^{d\times d}\times\mathbb{R})$ and any Borel set $E\subseteq Q_T$ for a.e. $t_0\in (0,T)$ it holds that 
			\begin{align}\label{eq:loc1}
				\lim_{\eps\to 0}\int_E|\langle\nu_{t_0+\eps t/T,x},g(\lambda_F,\lambda_\eta)\rangle-\langle\nu_{t_0,x},g(\lambda_F,\lambda_\eta)\rangle|=0.
			\end{align}
			This is a consequence of [\cite{KS2017}, Lemma 15] noting that the function $v(t,x)=\langle\nu_{t,x},g\rangle$ is an element of $L^\infty(0,T;L^{1}(Q))$ since 
			\[
			\sup_t\int_Q \langle\nu_{t,x},|\lambda_F|^p+|\lambda_\eta|^q\rangle<\infty.
			\]
			Hence, it follows that for any such $g$ and $E$, denoting by $\mathcal{X}_E$ the characteristic function of $E$ and $t_0$ fixed a.e. in $(0,T)$ using \eqref{eq:loc1}, we infer that 
			\begin{align}
				&\lim_{\eps\to 0}\lim_{k\to\infty}\int_E g(\nabla y^{k,\eps}(t,x),\eta^{k,\eps}(t,x))\nonumber\\
				&= \lim_{\eps\to 0}\lim_{k\to\infty}\frac{T}{\eps}\int_{t_0}^{t_0+\eps}\int_Q \mathcal{X}_E\left((t-t_0)T/\eps,x\right)g(\nabla y^{k,\eps}(t,x),\eta^{k,\eps}(t,x))\nonumber\\
				&=\lim_{\eps\to 0}\int_{Q_T}\mathcal{X}_E(t,x)\langle\nu_{t_0+\eps t/T,x},g(\lambda_F,\lambda_\eta)\rangle\nonumber\\
				&=\int_E\langle\nu_{t_0,x},g(\lambda_F,\lambda_\eta)\rangle.
			\end{align}
			In addition, similarly with [\cite{KS2017},Lemma 16] we have that 
			\begin{align}\label{eq:loc2}
				\lim_{\eps\to 0}\lim_{k\to\infty}\int_{Q_T}|y_k(t_0+\eps t/T,x)-y(t_0,x)|^p=0.
			\end{align}
			Now, for $g$ and $E$ in a countable dense subset of $C_{p,q}(\mathbb{R}^{d\times d}\times\mathbb{R})$ and of the collection of Borel subsets of $Q_T$, respectively, we may choose a subsequence $(\eps_k)$ such that \eqref{eq:loc1} and \eqref{eq:loc2} hold. In particular, for $t_0$ fixed almost everywhere in $(0,T)$,
			\begin{align*}
				&\lim_{k\to\infty}\int_E g\left(\nabla y^{k,\eps_k},\eta^{k,\eps_k}\right)=\int_E\langle\nu_{t_0,x},g(\lambda_F,\lambda_\eta)\rangle,
			\end{align*}
			for all the elements of the countable subsets where $g$ and $E$ belong and, by density, for all $g\in C_{p,q}(\mathbb{R}^{d\times d}\times\mathbb{R})$ and all $E\subseteq Q_T$, i.e.
			\begin{align*}
				g\left(\nabla y^{k,\eps_k},\eta^{k,\eps_k}\right)\rightharpoonup \langle\nu_{t_0,x},g(\lambda_F,\lambda_\eta)\rangle\,\,\,\text{in}\,\,\,L^1(Q_T),
			\end{align*}
			and $\left(\nabla y^{k,\eps_k},\eta^{k,\eps_k}\right)$ generates the measure $(\nu_{t_0,x})_x$. Note also that
			\begin{align*}
				(\nabla y^{k,\eps_k})\subseteq L^\infty(0,T;L^{p}(Q)) \,\,\,\,\text{and}\,\,\,\,(\eta^{k,\eps_k})\subseteq L^\infty(0,T;L^{q}(Q)).
			\end{align*}
			For $n\in\mathbb{N}$ and $(z_1,z_2)\in\mathbb{R}^{d\times d}\times\mathbb{R}$ consider the truncation operator 
			\begin{align*}
				\tau_n(z_1,z_2):=
				\begin{cases}
					(z_1,z_2), &|z_1|^2+|z_2|^2\leq n^2, \\
					n\,{(z_1,z_2)}/{|(z_1,z_2)|}, &|z_1|^2+|z_2|^2>n^2.
				\end{cases} 
			\end{align*}
			We observe that $\tau_n(z_1,z_2)=\left(\tau^F_n(z_1,z_2),\tau^\eta_n(z_1,z_2)\right)$ where 
			\begin{align*}
				\tau_n^F(z_1,z_2):=
				\begin{cases}
					z_1, &|z_1|^2+|z_2|^2\leq n^2, \\
					n\,{z_1}/{|(z_1,z_2)|}, &|z_1|^2+|z_2|^2>n^2,
				\end{cases} 
			\end{align*}
			and $\tau_n^\eta(z_1,z_2)$ is defined respectively. It is straightforward to see that for fixed $n\in\mathbb{N}$ the sequence $\left(|\tau_n^F(z_1,z_2)|^p+|\tau_n^\eta(z_1,z_2)|^q\right)$ is equiintegrable and so 
			\begin{align}\label{eq:loc3}
				&\lim_{n\to\infty}\lim_{k\to\infty}\int_{Q_T}|\tau_n^F(\nabla y^{k,\eps_k},\eta^{k,\eps_k})|^p+|\tau_n^\eta(\nabla y^{k,\eps_k},\eta^{k,\eps_k})|^q \nonumber \\
				&= \lim_{n\to\infty}\int_{Q_T}\langle\nu_{t_0,x},|\tau_n^F(\lambda_F\lambda_\eta)|^p+|\tau_n^\eta(\lambda_F,\lambda_\eta)|^q\rangle
				=\int_{Q_T}\langle\nu_{t_0,x},|\lambda_F|^p+|\lambda_\eta|^q\rangle,
			\end{align}
			where the second equality uses monotone convergence. Moreover, 
			\begin{align}\label{eq:loc4}
				\lim_{n\to\infty}\lim_{k\to\infty}\int_{Q_T}|\tau_n(\nabla y^{k,\eps_k},\eta^{k,\eps_k})-(\nabla y^{k,\eps_k},\eta^{k,\eps_k})|=0
			\end{align}
			due to the $L^1$-equiintegrability of $(\nabla y^{k,\eps_k},\eta^{k,\eps_k})$. Then, from \eqref{eq:loc3} and \eqref{eq:loc4}, there exists a subsequence $k_n$, such that 
			\begin{align*}
				&V_n:=\left(\tau_n^F(\nabla y^{k_n,\eps_{k_n}},\eta^{k_n,\eps_{k_n}}),\tau_n^\eta(\nabla y^{k_n,\eps_{k_n}},\eta^{k_n,\eps_{k_n}})\right)\overset{Y}\to(\nu_{t_0,x})_{x\in Q};\\
				&\left(|\tau_n^F(\nabla y^{k_n,\eps_{k_n}},\eta^{k_n,\eps_{k_n}})|^p+|\tau_n^\eta(\nabla y^{k_n,\eps_{k_n}},\eta^{k_n,\eps_{k_n}})|^q \right)\,\,\,\,\text{is equiinegrable}.
			\end{align*}
			Next, for almost all t, consider the decomposition 
			\begin{align}
				\tilde V_n:=\left(\mathcal{P}_{\rm{curl}}\left(V_n^{(1)}-\int_QV_n^{(1)}\right),\,V_n^{(2)}-\int_QV_n^{(2)}\right),
			\end{align}
			where 
			\begin{align*}
				\tilde V_n^{(1)}&:=\mathcal{P}_{\rm{curl}}\left(V_n^{(1)}-\int_QV_n^{(1)}\right),\,\,\,\,\,\text{}\,\,\,\,\,V_n^{(1)}:=\tau_n^F(\nabla y^{k_n,\eps_{k_n}},\eta^{k_n,\eps_{k_n}}),\\
				\tilde V_n^{(2)}&:=V_n^{(2)}-\int_QV_n^{(2)},\quad\quad\quad\quad\,\,\,\,\,\text{}\,\,\,\,\,V_n^{(2)}:=\tau_n^\eta(\nabla y^{k_n,\eps_{k_n}},\eta^{k_n,\eps_{k_n}})
			\end{align*}
			and $\mathcal{P}_{\rm{curl}}$ denotes the projection operator onto curl-free vector fields. For convenience, let us write $y^n:=y^{k_n,\eps_{k_n}}$ and $\eta^n:=\eta^{k_n,\eps_{k_n}}$ and recall that $\mathcal{P}_{\rm{curl}}$ is a strong $(r,r)$ operator, $1<r<\infty$. Then, for a.e. $t\in(0,T)$,
			\begin{align*}
				\|\tilde V_n^{(1)}(t,\cdot)\|_{L^p(Q)}&\leq C\|\nabla y^n(t,\cdot)\|_{L^p(Q)}\leq C \sup_t \|\nabla y^n(t,\cdot)\|_{L^p(Q)},\\
				\|\tilde V_n^{(2)}(t,\cdot)\|_{L^q(Q)}&\leq C\|\eta^n(t,\cdot)\|_{L^q(Q)}\leq C \sup_t \|\eta^n(t,\cdot)\|_{L^q(Q)},
			\end{align*}
			which shows that the sequences $(\tilde V_n^{(1)})$ and $(\tilde V_n^{(2)})$ are bounded in $L^\infty(0,T;L^{p}(Q))$ and $L^\infty(0,T;L^{q}(Q))$ respectively. To see that $(\tilde V_n)$ generates the measure $(\nu_{t_0,x})_{x\in Q}$, note that, denoting by $\mathcal{P}_{\rm div}$ the projection onto divergence-free vector fields,
			\begin{align}\label{eq:loc5}
				|\left(\nabla y^n(t,\cdot),\eta^n(t,\cdot)\right)-\tilde V_n(t,\cdot)|\leq |\nabla y^n(t,\cdot)-\tilde V_n^{(1)}(t,\cdot)|+|\eta^n(t,\cdot)-\tilde V_n^{(2)}(t,\cdot)|\nonumber\\
				= |\nabla y^n(t,\cdot)-\tau_n^F(\nabla y^n(t,\cdot),\eta^n(t,\cdot))+\mathcal{P}_{\rm{div}}\left(\tau_n^F(\nabla y^n(t,\cdot),\eta^n(t,\cdot))-\nabla y^n(t,\cdot)\right)|\nonumber\\
				+ \Big|\eta^n(t,\cdot)-\tau_n^\eta(\nabla y^n(t,\cdot),\eta^{n}(t,\cdot))+\int_Q \tau_n^\eta(\nabla y^n(t,\cdot),\eta^{n}(t,\cdot))\Big|\nonumber\\
				\leq |\nabla y^n(t,\cdot)-\tau_n^F(\nabla y^n(t,\cdot),\eta^n(t,\cdot))|+|\mathcal{P}_{\rm{div}}\left(\tau_n^F(\nabla y^n(t,\cdot),\eta^n(t,\cdot))-\nabla y^n(t,\cdot)\right)|\nonumber\\
				+|\eta^n(t,\cdot)-\tau_n^\eta(\nabla y^n(t,\cdot),\eta^{n}(t,\cdot))|+\Big|\int_Q \tau_n^\eta(\nabla y^n(t,\cdot),\eta^{n}(t,\cdot))\Big|=:\sum_{i=1}^4I^n_i.
			\end{align}
			However,  for any $\eps>0$ and almost all t
			\begin{align*}
				\mathcal{L}^d\big(\{|\left(\nabla y^n(t,\cdot),\eta^n(t,\cdot)\right)-\tilde V_n(t,\cdot)|>\eps\}&\big)\leq \frac{1}{\eps}\int_{\{|\left(\nabla y^n(t,\cdot),\eta^n(t,\cdot)\right)-\tilde V_n(t,\cdot)|>\eps\}}\eps dx\\
				\leq \frac{1}{\eps}\int_Q|\left(\nabla y^n(t,\cdot),\eta^n(t,\cdot)\right)-\tilde V_n(t,\cdot)|
				&\overset{\eqref{eq:loc5}}\leq \frac{1}{\eps}\int_Q I_1^n+I_2^n+I_3^n+I_4^n.
			\end{align*}
			Then, we claim that
			\begin{align*}
				\mathcal{L}^{d+1}\big(\{\,|\left(\nabla y^n,\eta^n\right)-\tilde V_n|>\eps\}\big)&=\int_0^T\mathcal{L}^d\big(\{|\left(\nabla y^n(t,\cdot),\eta^n(t,\cdot)\right)-\tilde V_n(t,\cdot)|>\eps\}\big)\\
				&\leq\frac{C}{\eps}\int_{Q_T}I_1^n+I_2^n+I_3^n+I_4^n\to 0.
			\end{align*}
			Indeed, for the first term
			\begin{align*}
				&\int_{Q_T}I_1^n=\|\nabla y^n-\tau_n^F(\nabla y^n,\eta^n)\|_{L^1(Q_T)}\leq 2\int_{\{|\nabla y^n|^2+|\eta^n|^2> n^2\}}|\nabla y^n| \\
				&\leq 2 \int_{\{|\nabla y^n|^2+|\eta^n|^2> n^2\}} \frac{\left(|\nabla y^n|+|\eta^n|\right)^2}{|\nabla y^n|+|\eta^n|}\leq 2 \int_{\{|\nabla y^n|+|\eta^n|> n\}} \frac{\left(|\nabla y^n|+|\eta^n|\right)^2}{|\nabla y^n|+|\eta^n|}\\
				&\leq \frac{2}{n}\int_{Q_T}|\nabla y^n|^2+|\eta^n|^2 \leq \frac{2T}{n}\sup_t\int_{Q}|\nabla y^n|^2+|\eta^n|^2 \to 0,
			\end{align*}     
			whenever $n\to\infty$. Similarly we may prove that $\int_{Q_T}I_3^n\to 0$ as $n\to\infty$. Since $\mathcal{P}_{\rm{div}}$ is a weak $(1,1)$ operator, term $\int_{Q_T}I_2^n$ behaves like $\int_{Q_T}I_1^n$ and thus
			\[
			\int_{Q_T}I_2^n\to 0,\,\,\,\,\,\text{whenever}\,\,\,\,\,n\to\infty.
			\]
			Concerning the last term, since $\|\eta^n-\tau_n^\eta(\nabla y^n,\eta^n)\|_{L^1(Q_T)}\to 0$ and $\int_Q\eta^n(t,\cdot)dx=0$ for a.e. $t\in (0,T)$, we infer that 
			\begin{align*}
				\lim_n\int_0^T\Big|\int_Q\tau_n^\eta(\nabla y^n(t,\cdot),\eta^n(t,\cdot))\Big|dt=0,
			\end{align*}
			which concludes the proof of the claim. From the above estimates we  infer that the sequence $\tilde{V}_n:=(\tilde{V}^{(1)}_n,\tilde{V}^{(2)}_n)$ generates the Young measure $(\nu_{t_0,x})_{x\in Q}$.
			\par 
			The equiintegrability of the sequences $\tilde V_n^{(1)}$ and $\tilde V_n^{(2)}$ comes directly from the equiintegrability of  $ V_n^{(1)}$ and $ V_n^{(2)}$. We note that since $\tilde{V}^{(1)}_n(t,\cdot)$ is curl-free for a.e. $t\in(0,T)$, there exists $z_k\in L^\infty(0,T;W^{1,p}(Q))$ s.t. $\tilde{V}^{(1)}_n(t,\cdot)=\nabla z^n(t,\cdot)$. In addition, we set $w_n:=\tilde{V}^{(2)}_n(t,\cdot)$ to serve the requirements of the theorem.
			\par 
			Finally, for the strong convergence of the primitives of the sequence $\tilde V_n^{(1)}$ we follow the proof of [\cite{KS2017},Lemma 16]. We note that by \eqref{eq:loc4} and the Lebesgue interpolation theorem, it holds that
			\begin{align}\label{eq:loc7}
				\lim_n\int_0^1\int_Q|V_n^{(1)}-\nabla y^n|=0\Rightarrow \lim_n\|V_n^{(1)}-\nabla y^n\|_{L^r(L^m)}=0,
			\end{align}
			for all $r<\infty$, $m<p$. Since $V_n^{(1)}:=\nabla z^n+\mathcal{P}_{\rm div}(V_n^{(1)})$, by adding $\nabla y^n$ to both sides and taking the divergence we get that
			\begin{align}
				-\Delta(z^n-y^n)={\rm div}(\nabla y^n-V_n^{(1)}).
			\end{align}
			Then, by standard elliptic estimates, for all $1<m<\infty$ it holds that
			\begin{align}\label{eq:loc6}
				\|\nabla z^n(t,\cdot)-\nabla y^n(t,\cdot)\|_{L^m(Q)}\lesssim \|\nabla y^n(t,\cdot)-V_n^{(1)}(t,\cdot)\|_{L^m(Q)}.
			\end{align}
			In our setting we treat the case $d=3$ and so letting $m=3p/(p+3)$, by Sobolev embedding and \eqref{eq:loc6} we have that
			\begin{align*}
				\|z^n(t,\cdot)- y^n(t,\cdot)\|_{L^p(Q)}\lesssim  \|\nabla y^n(t,\cdot)-V_n^{(1)}(t,\cdot)\|_{L^m(Q)},
			\end{align*}
			and by integrating in time,
			\begin{align*}
				\int_0^T \|z^n(t,\cdot)- y^n(t,\cdot)\|_{L^p(Q)}^p dt\lesssim \int_0^T \|\nabla y^n(t,\cdot)-V_n^{(1)}(t,\cdot)\|_{L^m(Q)}^p\to 0,
			\end{align*}
			as $n\to\infty$. The last convergence comes from \eqref{eq:loc7} and concludes the proof of the theorem since, from \eqref{eq:loc2}, $y^n\to y(t_0,\cdot)$ in $L^p(Q_T)$.

		\end{proof}
	\end{Theorem}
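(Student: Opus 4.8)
The plan is to construct $(\nabla z_k,w_k)$ in three successive steps, each repairing a different deficiency of the raw generating sequence: a \emph{time-rescaling} to replace the space-time measure $\nu$ by the frozen slice $(\nu_{t_0,x})_x$; a \emph{truncation} to recover $L^1$-equiintegrability; and a \emph{Helmholtz-type projection} to return to the wave cone $\{(\nabla\phi,\psi)\}$ while preserving everything gained so far.

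First, I would introduce the rescaled maps $y^{k,\eps}(t,x):=y_k(t_0+\eps t/T,x)$ and $\eta^{k,\eps}(t,x):=\eta_k(t_0+\eps t/T,x)$, which concentrate the time variable near $t_0$. Fix a countable dense family of test functions $g\in C_{p,q}(\mathbb{R}^{d\times d}\times\mathbb{R})$ and a countable generating family of Borel sets $E\subseteq Q_T$. The point is that $t\mapsto\langle\nu_{t,x},g\rangle$ belongs to $L^\infty(0,T;L^1(Q))$ by the uniform moment bound $\sup_t\int_Q\langle\nu_{t,x},|\lambda_F|^p+|\lambda_\eta|^q\rangle<\infty$, so almost every $t_0$ is a Lebesgue point and $\int_E|\langle\nu_{t_0+\eps t/T,x},g\rangle-\langle\nu_{t_0,x},g\rangle|\to 0$ as $\eps\to 0$ (a Lebesgue-point argument in the spirit of \cite{KS2017}). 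Combined with the inner limit $\lim_k\int_E g(\nabla y^{k,\eps},\eta^{k,\eps})=\int_E\langle\nu_{t_0+\eps t/T,x},g\rangle$ and with the companion fact that $y_k(t_0+\eps t/T,x)\to y(t_0,x)$ in $L^p(Q_T)$, a diagonal choice $\eps=\eps_k\to 0$ yields a sequence $(\nabla y^{k,\eps_k},\eta^{k,\eps_k})$, bounded in $L^\infty(L^p)\times L^\infty(L^q)$, that generates $(\nu_{t_0,x})_x$ and whose primitives converge strongly to $y(t_0,\cdot)$.

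Next, to repair equiintegrability, I would apply the radial truncation $\tau_n=(\tau_n^F,\tau_n^\eta)$ at level $n$. For fixed $n$ the truncated images are uniformly bounded, hence equiintegrable; and as $n\to\infty$ both $\langle\nu_{t_0,x},|\tau_n^F|^p+|\tau_n^\eta|^q\rangle\uparrow\langle\nu_{t_0,x},|\lambda_F|^p+|\lambda_\eta|^q\rangle$ by monotone convergence and $\|\tau_n(\nabla y^{k,\eps_k},\eta^{k,\eps_k})-(\nabla y^{k,\eps_k},\eta^{k,\eps_k})\|_{L^1(Q_T)}\to 0$ by the $L^1$-equiintegrability from the previous step. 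A further diagonalisation produces an equiintegrable sequence $V_n=(V_n^{(1)},V_n^{(2)})$, with $V_n^{(1)}$ no longer curl-free, still generating $(\nu_{t_0,x})_x$. I then project: set $\tilde V_n^{(1)}:=\mathcal{P}_{\rm{curl}}(V_n^{(1)}-\int_Q V_n^{(1)})$ and $\tilde V_n^{(2)}:=V_n^{(2)}-\int_Q V_n^{(2)}$. Since $\mathcal{P}_{\rm{curl}}$ is a strong $(r,r)$ operator for $1<r<\infty$, the bounds in $L^\infty(L^p)\times L^\infty(L^q)$ and the equiintegrability are preserved; and since $V_n-(\nabla y^n,\eta^n)$ is small in $L^1(Q_T)$ while the complementary projection $\mathcal{P}_{\rm{div}}$ is only weak $(1,1)$, the corrected sequence $\tilde V_n$ differs from $(\nabla y^n,\eta^n)$ by a quantity tending to zero in measure, so $\tilde V_n$ still generates $(\nu_{t_0,x})_x$. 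Writing $\tilde V_n^{(1)}=\nabla z^n$ (possible since it is curl-free) and $w_n:=\tilde V_n^{(2)}$ furnishes the candidates.

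Finally, for the strong convergence $z^n\to y(t_0,\cdot)$ in $L^p(Q_T)$, I would note $V_n^{(1)}=\nabla z^n+\mathcal{P}_{\rm{div}}V_n^{(1)}$, hence $-\Delta(z^n-y^n)={\rm div}(\nabla y^n-V_n^{(1)})$, and standard elliptic estimates give $\|\nabla z^n(t,\cdot)-\nabla y^n(t,\cdot)\|_{L^m(Q)}\lesssim\|\nabla y^n(t,\cdot)-V_n^{(1)}(t,\cdot)\|_{L^m(Q)}$ for any $1<m<\infty$; taking $m=3p/(p+3)$ in $d=3$, Sobolev embedding upgrades this to control of $\|z^n(t,\cdot)-y^n(t,\cdot)\|_{L^p(Q)}$, and integration in $t$ together with the $L^r(L^m)$-smallness of $\nabla y^n-V_n^{(1)}$ (which follows from its $L^1$-smallness by interpolation, using $m<p$) forces $z^n-y^n\to 0$ in $L^p(Q_T)$; combined with $y^n\to y(t_0,\cdot)$ from the first step this is the claim. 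The main obstacle is the bookkeeping across the three nested subsequence extractions: one must check that the truncation error stays small \emph{after} the projection — this is exactly where the weak-$(1,1)$ bound for $\mathcal{P}_{\rm{div}}$ (rather than a false strong $L^1$ bound) is needed — and that the diagonal indices can be chosen consistently so that equiintegrability, the generated Young measure, and the strong convergence of primitives all survive at once.
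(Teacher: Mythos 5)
Your proposal reproduces the paper's proof in all essential respects: the same time-rescaling and Lebesgue-point argument to localise the Young measure at $t_0$, the same radial truncation plus diagonalisation to restore equiintegrability, the same Helmholtz-type correction using the strong $(r,r)$ bound for $\mathcal{P}_{\rm curl}$ together with the weak $(1,1)$ bound for $\mathcal{P}_{\rm div}$, and the same elliptic estimate combined with Sobolev embedding and interpolation for the strong $L^p$ convergence of the primitives. There is no substantive departure from the paper's argument.
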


	\section{G\aa rding inequality}
	In this section, and more precisely in Theorem \ref{theorem:3Garding}, we prove the G\aa rding inequality \eqref{eq:mmgarding} which plays a crucial role in the proof of our weak-strong uniqueness result, Theorem \ref{Uniqueness_quasiconvex}. We collect all continuous functions $\bar{F}:\mathbb{R}^d\to\mathbb{R}^{d\times d}$ and $\bar{\eta}:\mathbb{R}^d\to\mathbb{R}$ in the ball of $L^\infty(Q)$ of radius $K$, with uniform modulus of continuity $\omega$, in the set
	\begin{align*}
		\mathcal{U}_K:=\left\{(\bar{F},\bar{\eta})\in C_K(Q):|\bar{F}(x)-\bar{F}(y)|+|\bar{\eta}(x)-\bar{\eta}(y)|\leq\omega(|x-y|),\forall x,y\in\overline{Q}\right\},
	\end{align*}
	where $C_K(Q):=\left\{(\bar{F},\bar{\eta})\in C(Q;\mathbb{R}^{d\times d})\times C(Q;\mathbb{R}):\|\bar{F}\|_{L^\infty(Q)}+\|\bar{\eta}\|_{L^\infty(Q)}\leq K\right\}$. 
	\par
	\vspace{0.2cm}
	Next, for $(z_1,z_2)\in\mathbb{R}^{d\times d}\times\mathbb{R}$ and $e\in C^3(\mathbb{R}^{d\times d}\times\mathbb{R})$ which satisfies the growth conditions $(H_2)$ and $(H_3)$,  we  define the function
	\[
	\tilde{e}(z_1,z_2):=e(z_1,z_2)-C_1|V_p(z_1)|^2-C_2|V_q(z_2)|^2,
	\]
	which is not hard to check that satisfies the same growth and coercivity conditions with $e$ up to smaller positive constants. The corresponding Hessians of $e$ and $\tilde{e}$ are denoted by $L$ and $\tilde{L}$ respectively, i.e.
	\begin{align*}
		L(\lambda_1,\lambda_2)[(\xi_1,\xi_2),(\xi_1,\xi_2)]:=e_{FF}(\lambda_1,\lambda_2)\xi_1\xi_1+2e_{\eta F}(\lambda_1,\lambda_2)\xi_1\xi_2+e_{\eta\eta}(\lambda_1,\lambda_2)\xi_2\xi_2,
	\end{align*}
	for all $(\xi_1,\xi_2)\in\mathbb{R}^{d\times d}\times\mathbb{R}$ and $(\lambda_1,\lambda_2)\in \overline{B(0,K)}:=\{\lambda\in\mathbb{R}^{d\times d}\times\mathbb{R}:\,|\lambda|\leq K\}$.  In the sequel, without loss of generality, we assume that $p\geq q$. In the opposite case, i.e. if $p<q$, the results below can be proved following the same strategy but with the respective adjustments in the proofs.

	\vspace{0.3cm}
	\par
	We next prove a series of results which lead to the proof of Theorem \ref{thm:main}. Lemma \ref{growths} provides some properties of the relative function $e(\cdot|\cdot)$ and its proof can be found in the Appendix.  For brevity, henceforth, constants shown to depend on $K$, e.g. $C(K)$, may also depend on the modulus of continuity $\omega$ but the latter dependence is omitted from the notation.
	
	\begin{Lemma}\label{growths}
		Let $f$ satisfy $(H_1),\,(H_2)$ and $(H_3)$. Then the following hold: 
		\begin{itemize}
			\item[(a)] There exists $C=C(f,K)$ such that for all $(\lambda_1,\lambda_2)\in \overline{B(0,K)}$
			\begin{align*}
				&|f(\lambda_1+\xi_1,\lambda_2+\xi_2|\lambda_1,\lambda_2) - f(\lambda_1+z_1,\lambda_2+z_2|\lambda_1,\lambda_2)| \\
				&\leq C (|\xi_1|+|\xi_2|+|z_1| + |z_2| +|\xi_1|^{p-1}+ |z_1|^{p-1}+|\xi_2|^{q\frac{p-1}{p}})|\xi_1-z_1|\\
				&+C(|\xi_1|+|\xi_2|+|z_1| + |z_2| +|\xi_2|^{q-1}+ |z_2|^{q-1}+|z_1|^{p\frac{q-1}{q}})|\xi_2-z_2|.
			\end{align*}
			Additionally,
			\[
			|f(\lambda_1+\xi_1,\lambda_2+\xi_2|\lambda_1,\lambda_2)|\leq C\big(|V_p(\xi_1)|^2+|V_q(\xi_2)|^2\big).
			\]
			\item[(b)] For every $\delta>0$ there exists $R = R(\delta,f,K)>0$ such that for all $(\lambda_1,\lambda_2),$ $(\mu_1,\mu_2)\in \overline{B(0,K)}$ with $|(\lambda_1,\lambda_2)-(\mu_1,\mu_2)|<R$, it holds that
			\begin{align*}
				|f(\lambda_1+\xi_1,\lambda_2+\xi_2|\lambda_1,\lambda_2) - f(\mu_1+\xi_1,\mu_2+\xi_2|\mu_1,\mu_2)|\\
				\leq \delta \big(|V_p(\xi_1)|^2+|V_q(\xi_2)|^2\big).
			\end{align*}
			\item[(c)] There exist constants $d_1=d_1(f,K)$, $d_2=d_2(f,K)$ such that for all $(\lambda_1,\lambda_2)\in \overline{B(0,K)}$
			\[
			f(\lambda_1+\xi_1,\lambda_2+\xi_2|\lambda_1,\lambda_2) \geq d_1 \big(|\xi_1|^p+|\xi_2|^q\big) - d_2 \big(|\xi_1|^2+|\xi_2|^2\big).
			\]
		\end{itemize}
	\end{Lemma}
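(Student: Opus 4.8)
The plan is to derive all three estimates from the fundamental theorem of calculus together with the growth hypotheses $(H_2)$ and $(H_3)$, with a split into a bounded and an unbounded regime for $(\xi_1,\xi_2)$ in parts (b) and (c). For part (a), the base-point contributions cancel in the difference, leaving $f(\lambda_1+\xi_1,\lambda_2+\xi_2)-f(\lambda_1+z_1,\lambda_2+z_2)-f_F(\lambda_1,\lambda_2)(\xi_1-z_1)-f_\eta(\lambda_1,\lambda_2)(\xi_2-z_2)$. I would join $(\lambda_1+z_1,\lambda_2+z_2)$ to $(\lambda_1+\xi_1,\lambda_2+\xi_2)$ along the broken line that first moves the $\eta$-variable and then the $F$-variable, write the increment by the fundamental theorem of calculus, and subtract the linear terms, obtaining the sum of the two integrals
\[
\int_0^1\!\bigl[f_\eta(\lambda_1+z_1,\lambda_2+z_2+s(\xi_2-z_2))-f_\eta(\lambda_1,\lambda_2)\bigr](\xi_2-z_2)\,ds,\qquad \int_0^1\!\bigl[f_F(\lambda_1+z_1+s(\xi_1-z_1),\lambda_2+\xi_2)-f_F(\lambda_1,\lambda_2)\bigr](\xi_1-z_1)\,ds.
\]
Bounding these integrands via $(H_3)_2$ and $(H_3)_1$ respectively, absorbing the bounded base point $|\lambda_i|\le K$ into the constants and the constant part via $|\xi_1-z_1|\le|\xi_1|+|z_1|$, yields exactly the claimed inequality; moving $\eta$ before $F$ is precisely what produces the one-sided exponents $|\xi_2|^{q(p-1)/p}$ in the first line and $|z_1|^{p(q-1)/q}$ in the second. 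The second estimate of (a) then follows by taking $z_1=z_2=0$, since $f(\lambda_1,\lambda_2|\lambda_1,\lambda_2)=0$, and applying Young's inequality to each resulting product (e.g. $|\xi_2|^{q(p-1)/p}|\xi_1|\le\tfrac1p|\xi_1|^p+\tfrac{p-1}{p}|\xi_2|^q$), so that everything is dominated by $|V_p(\xi_1)|^2+|V_q(\xi_2)|^2$.

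For part (b) I would split according to whether $|\xi_1|+|\xi_2|\le M$ or $>M$, with $M=M(\delta,f,K)$ fixed below. In the bounded regime, Taylor's formula (using $(H_1)$) gives $f(\lambda+\xi|\lambda)=\int_0^1(1-s)D^2f(\lambda+s\xi)[\xi,\xi]\,ds$, so the difference is $\int_0^1(1-s)\bigl[D^2f(\lambda+s\xi)-D^2f(\mu+s\xi)\bigr][\xi,\xi]\,ds$, which by uniform continuity of $D^2f$ on the compact ball $\overline{B(0,K+M)}$ is $\le\tfrac\delta2|\xi|^2\le\delta(|V_p(\xi_1)|^2+|V_q(\xi_2)|^2)$ once $|\lambda-\mu|$ is small enough. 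In the unbounded regime I would write the difference as $\bigl[f(\lambda+\xi)-f(\mu+\xi)\bigr]-\bigl[f(\lambda)-f(\mu)\bigr]-\bigl[Df(\lambda)-Df(\mu)\bigr]\cdot\xi$ and bound the three pieces: the middle one by the modulus of continuity of $f$ on $\overline{B(0,K)}$; the last one by $2\|Df\|_{L^\infty(\overline{B(0,K)})}(|\xi_1|+|\xi_2|)$ together with $|\xi_1|+|\xi_2|\le\tfrac2M(|\xi_1|^2+|\xi_2|^2)$ in this regime; and the first one by $f(\lambda+\xi)-f(\mu+\xi)=\int_0^1Df(\mu+\xi+t(\lambda-\mu))\cdot(\lambda-\mu)\,dt$, where $(H_3)$ and Young's inequality give $|Df(\mu+\xi+t(\lambda-\mu))|\lesssim 1+|\xi_1|^p+|\xi_2|^q$. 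Since $|V_p(\xi_1)|^2+|V_q(\xi_2)|^2\ge1$ in this regime, choosing first $M$ large (to kill the middle and last terms) and then $|\lambda-\mu|<R$ small (to kill the first) makes the whole difference $\le\delta(|V_p(\xi_1)|^2+|V_q(\xi_2)|^2)$, with $R$ the minimum of the finitely many smallness thresholds.

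For part (c) I would split at $|\xi|\le1$ versus $|\xi|>1$. For $|\xi|\le1$, Taylor's formula and $p,q\ge2$ give $f(\lambda+\xi|\lambda)\ge-C|\xi|^2\ge d_1(|\xi_1|^p+|\xi_2|^q)-d_2(|\xi_1|^2+|\xi_2|^2)$ whenever $d_2\ge C+d_1$, using $|\xi_i|^p,|\xi_i|^q\le|\xi_i|^2$ there. For $|\xi|>1$, the lower bound in $(H_2)$ gives $f(\lambda+\xi|\lambda)\ge c(|\lambda_1+\xi_1|^p+|\lambda_2+\xi_2|^q-1)-|f(\lambda)|-|Df(\lambda)|(|\xi_1|+|\xi_2|)$; then $|\lambda_1+\xi_1|^p\ge2^{1-p}|\xi_1|^p-K^p$ (and likewise for $\eta$), the terms $|f(\lambda)|,|Df(\lambda)|$ are bounded by constants depending on $K$, and $|\xi_1|+|\xi_2|\le2(|\xi_1|^2+|\xi_2|^2)$ together with $|\xi_1|^2+|\xi_2|^2>1$ lets all constants be absorbed into the quadratic term, yielding the claim with $d_1=c\,2^{1-p}$ and $d_2$ depending only on $f$ and $K$ (recall $p\ge q$). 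Taking $d_1,d_2$ valid in both regimes finishes the proof.

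I expect part (b) to be the main obstacle: the estimate must hold uniformly over \emph{all} $(\xi_1,\xi_2)$, in particular for large $\xi$ where no pointwise control of the Hessian is available from the hypotheses, so one cannot simply invoke uniform continuity of $D^2f$ on a fixed compact set. The bounded/unbounded split, and the careful use of $(H_3)$ and Young's inequality to convert the large-$\xi$ tail into a small multiple of $|V_p(\xi_1)|^2+|V_q(\xi_2)|^2$, is the crux; parts (a) and (c) are then routine applications of the fundamental theorem of calculus, the growth conditions, and Young's inequality.
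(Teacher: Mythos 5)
Parts (b) and (c) follow essentially the paper's own strategy: split into a bounded regime where a Taylor representation and boundedness of $D^2f$ on a compact set give the estimate, and an unbounded regime where the growth and coercivity hypotheses take over. Your version of (b) introduces an extra parameter $M$ and uses the cruder bound $\|Df\|_{L^\infty}/M$ for the term $|Df(\lambda)-Df(\mu)|\cdot|\xi|$, where the paper instead uses the Lipschitz constant of $Df$ on $\overline{B(0,K)}$ together with $|\lambda-\mu|<R$, but both close the argument; (c) matches the paper's proof.

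Part (a) has a genuine gap: you omit the case split, and the case split is essential. Your path-integral identity and the choice of path (move $\eta$ first, then $F$) are correct and coincide with the paper's. The problem is the claim that applying $(H_3)$ to the integrands and then ``absorbing the constant part via $|\xi_1-z_1|\le|\xi_1|+|z_1|$'' yields the stated inequality. The hypothesis $(H_3)$ is a one-sided growth bound, not a Lipschitz estimate, so bounding $|f_\eta(\lambda_1+z_1,\lambda_2+z_2+s(\xi_2-z_2))-f_\eta(\lambda_1,\lambda_2)|$ via the triangle inequality and $(H_3)$ leaves an additive constant $C>0$, giving $(C+|z_1|^{p(q-1)/q}+|z_2|^{q-1}+|\xi_2|^{q-1})|\xi_2-z_2|$. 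That constant cannot be bounded by any multiple of $|\xi_1|+|\xi_2|+|z_1|+|z_2|$ when all four quantities are small; the inequality $|\xi_1-z_1|\le|\xi_1|+|z_1|$ replaces a small factor by a larger one and does not help here. Concretely, taking $\xi_1=z_1=z_2=0$ and $\xi_2=\eps\to 0$: the left-hand side is $f(\lambda_1,\lambda_2+\eps|\lambda_1,\lambda_2)=O(\eps^2)$ by Taylor, and the right-hand side of the Lemma is also $O(\eps^2)$, but your bound would only give $O(\eps)$. The paper resolves this by splitting at $|\xi_1|+|\xi_2|+|z_1|+|z_2|\le 1$: in the small regime it uses the Taylor-remainder representation
\[
f(\lambda+\xi|\lambda)-f(\lambda+z|\lambda)=\int_0^1(1-t)\Bigl(L(\lambda+t\xi)[\xi,\xi]-L(\lambda+tz)[z,z]\Bigr)\,dt,
\]
where $L=D^2 f$ is bounded on the compact set $\overline{B(0,K+4)}$, and a polarization identity produces exactly the quadratic cancellation $\lesssim(|\xi_1|+|\xi_2|+|z_1|+|z_2|)(|\xi_1-z_1|+|\xi_2-z_2|)$; only in the regime $>1$ does your single-path $(H_3)$ argument work, because there $1\le|\xi_1|+|\xi_2|+|z_1|+|z_2|$ and the constant can legitimately be absorbed. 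Adding this split would repair part (a).
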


	Next, we prove two important properties of the function $\tilde{e}$. In the first lemma below, Lemma \ref{teqc}, we show that $\tilde{e}$ retains the key quasiconvexity property of $e$ in $\overline{B(0,K)}$, and as a consequence of this result we next prove in Lemma \ref{hsnfx} that the Hessian $\tilde{L}$ is positive for fixed $x_0\in Q$. 
	\begin{Lemma}\label{teqc}
		The function $\tilde{e}$ is strongly quasiconvex at all $(\lambda_1,\lambda_2)\in \overline{B(0,K)}$ with constant $c_0/2$, i.e. for any $Q^\prime \subseteq Q$ and all $|\lambda_1|+|\lambda_2|\leq K$
		\begin{equation*}
			\int_{Q^\prime} \tilde{e}(\lambda_1+\nabla\phi,\lambda_2+\psi)-\tilde{e}(\lambda_1,\lambda_2)-\tilde{e}_\eta(\lambda_1,\lambda_2)\psi\geq c_0\int_{Q^\prime}|V_p(\nabla\phi)|^2+|V_q(\psi)|^2,
		\end{equation*}
		holds for all $\phi\in W^{1,p}_0(Q^\prime)$ and $\psi\in L^q(Q^\prime)$.
	\end{Lemma}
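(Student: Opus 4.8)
The plan is to deduce the strong quasiconvexity of $\tilde{e}$ at a point $(\lambda_1,\lambda_2)$ with $|\lambda_1|+|\lambda_2|\le K$ from that of $e$ (which holds with some constant $c_0>0$ by Definition \ref{defqcc}), by checking that subtracting $h(z_1,z_2):=C_1|V_p(z_1)|^2+C_2|V_q(z_2)|^2$ costs at most half of $c_0$. First I would fix $Q'\subseteq Q$, $\phi\in W^{1,p}_0(Q')$ and $\psi\in L^q(Q')$ and, using $\tilde{e}=e-h$ together with the fact that $|V_p(\cdot)|^2$ does not depend on the entropy variable (so that $\tilde{e}_\eta=e_\eta-h_\eta$ with $h_\eta(\lambda_1,\lambda_2)=C_2\,(|V_q|^2)'(\lambda_2)$), write
\begin{align*}
\int_{Q'}&\tilde{e}(\lambda_1+\nabla\phi,\lambda_2+\psi)-\tilde{e}(\lambda_1,\lambda_2)-\tilde{e}_\eta(\lambda_1,\lambda_2)\psi\\
&=\int_{Q'}\big(e(\lambda_1+\nabla\phi,\lambda_2+\psi)-e(\lambda_1,\lambda_2)-e_\eta(\lambda_1,\lambda_2)\psi\big)\\
&\quad-\int_{Q'}\big(h(\lambda_1+\nabla\phi,\lambda_2+\psi)-h(\lambda_1,\lambda_2)-h_\eta(\lambda_1,\lambda_2)\psi\big).
\end{align*}
By the strong quasiconvexity of $e$ (Definition \ref{defqcc}) the first integral is $\ge c_0\int_{Q'}|V_p(\nabla\phi)|^2+|V_q(\psi)|^2$, so the remaining task is to bound the $h$-integral above by $\tfrac{c_0}{2}\int_{Q'}|V_p(\nabla\phi)|^2+|V_q(\psi)|^2$.

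Next I would treat the two pieces of $h$ separately. The $|V_q|^2$ contribution to the $h$-integrand, namely $|V_q(\lambda_2+\psi)|^2-|V_q(\lambda_2)|^2-(|V_q|^2)'(\lambda_2)\,\psi$, is precisely the relative function of $|V_q(\cdot)|^2$ at $\lambda_2$ evaluated at $\psi$; since $z\mapsto|z|^q+|z|^2$ is $C^2$ (this is where $q\ge2$ enters) with second derivative of size $O(1+|z|^{q-2})$, a Taylor expansion with integral remainder — exactly the computation used in the proof of Lemma \ref{growths}(a), which needs only $C^2$ regularity and the Hessian growth — gives the pointwise bound $\le C(K)|V_q(\psi)|^2$ for $|\lambda_2|\le K$. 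For the $|V_p|^2$ contribution, $|V_p(\lambda_1+\nabla\phi)|^2-|V_p(\lambda_1)|^2$, I would split off the first-order term $D(|V_p|^2)(\lambda_1):\nabla\phi$; its integral over $Q'$ vanishes because $\phi\in W^{1,p}_0(Q')$ yields $\int_{Q'}\nabla\phi=0$, while the remainder is the relative function of $|V_p(\cdot)|^2$ at $\lambda_1$, hence $\le C(K)|V_p(\nabla\phi)|^2$ pointwise by the same Taylor argument (here $p\ge2$ is used). Summing these,
\begin{align*}
\int_{Q'}h(\lambda_1+\nabla\phi,\lambda_2+\psi)-h(\lambda_1,\lambda_2)-h_\eta(\lambda_1,\lambda_2)\psi\le C(K)(C_1+C_2)\int_{Q'}|V_p(\nabla\phi)|^2+|V_q(\psi)|^2.
\end{align*}

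Finally, recalling that $C_1$ and $C_2$ in the definition of $\tilde{e}$ are fixed small constants (depending only on $e$ and $K$ — this is exactly the ``up to smaller positive constants'' reduction recorded when $\tilde{e}$ was introduced), for the present lemma it suffices to know that they obey $C(K)(C_1+C_2)\le c_0/2$ with $C(K)$ the constant produced above; combining the two estimates then gives
\begin{align*}
\int_{Q'}\tilde{e}(\lambda_1+\nabla\phi,\lambda_2+\psi)-\tilde{e}(\lambda_1,\lambda_2)-\tilde{e}_\eta(\lambda_1,\lambda_2)\psi\ge\frac{c_0}{2}\int_{Q'}|V_p(\nabla\phi)|^2+|V_q(\psi)|^2,
\end{align*}
i.e. the assertion with constant $c_0/2$. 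I do not anticipate a real obstacle: the only point requiring attention is the cancellation of the first-order term in the $|V_p|^2$ piece, which rests on the zero boundary values $\phi\in W^{1,p}_0(Q')$ — the equivalent periodic formulation in Definition \ref{defqcc} is handled identically, since $\int_Q\nabla\phi=0$ for $Q$-periodic $\phi$ as well — and the remark that the Taylor remainder estimates for $|z|^p$ and $|z|^q$ rely on $p,q\ge2$ so that these functions are $C^2$ with the stated Hessian growth; the smallness of $C_1,C_2$ and the uniformity of $C(K)$ over $\overline{B(0,K)}$ are routine.
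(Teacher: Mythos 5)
Your proposal is correct and follows essentially the same route as the paper: split $\tilde e = e - C_1|V_p|^2 - C_2|V_q|^2$, invoke the strong quasiconvexity of $e$ for the leading term, use that the first-order term in the $|V_p|^2$ piece integrates to zero because $\int_{Q'}\nabla\phi=0$, bound the remaining relative quantities of $|V_p|^2$ and $|V_q|^2$ by $C(K)\big(|V_p(\nabla\phi)|^2+|V_q(\psi)|^2\big)$ (the paper does this by citing Lemma \ref{growths}(a), which is exactly the Taylor argument you describe), and then choose $C_1,C_2$ small enough.
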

	\begin{proof}
		Let $Q'\subseteq Q$, $\phi\in W^{1,p}_0(Q')$, $\psi\in L^q(Q')$ and $(\lambda_1,\lambda_2)\in \overline{B(0,K)}$. Then 
		\begin{align*}
			&\int_{Q'}\tilde e(\lambda_1+\nabla\phi,\lambda_2+\psi)-\tilde e(\lambda_1,\lambda_2)-\tilde e_\eta(\lambda_1,\lambda_2)\psi\\
			&=\int_{Q'}e(\lambda_1+\nabla\phi,\lambda_2+\psi)-e(\lambda_1,\lambda_2)-e_\eta(\lambda_1,\lambda_2)\psi\\
			&-C_1\int_{Q'}|\lambda_1+\nabla\phi|^p-|z_1|^p+|\lambda_1+\nabla\phi|^2-|\lambda_1|^2\\
			&-C_2\int_{Q'}|\lambda_2+\psi|^q-|\lambda_2|^q+|\lambda_2+\psi|^2-|\lambda_2|^2-q\lambda_2|\lambda_2|^{q-2}\psi-2\lambda_2\psi
		\end{align*}
		\begin{align*}
			&=\int_{Q'}e(\lambda_1+\nabla\phi,\lambda_2+\psi)-e(\lambda_1,\lambda_2)-e_\eta(\lambda_1,\lambda_2)\psi\\
			&-C_1\int_{Q'}|\lambda_1+\nabla\phi|^p-|\lambda_1|^p+|\nabla\phi|^2\\
			&-C_2\int_{Q'}|\lambda_2+\psi|^q-|\lambda_2|^q-q\lambda_2|\lambda_2|^{q-2}\psi+|\psi|^2=:I_1+I_2+I_3.
		\end{align*}
		By the quasiconvexity of $e$ we infer that
		\begin{equation*}
			I_1\geq c_0\int_{Q'}|\nabla\phi|^p+|\nabla\phi|^2+|\psi|^q+|\psi|^2,
		\end{equation*}
		and for $f(\cdot)=|V_i(\cdot)|^2$ with $i=p$ and $i=q$ respectively in Lemma \ref{growths} (a), we deduce that
		\begin{align*}
			I_2&\geq - C_1 C\int_{Q'}|\nabla\phi|^p+|\nabla\phi|^2-C_1\int_{Q'}|\nabla\phi|^2,\\
			I_3&\geq -C_2 C\int_{Q'}|\psi|^p+|\psi|^2-C_2\int_{Q'}|\psi|^2.
		\end{align*}
		So, we may choose $C_1\leq c_0/(2C)$ and $C_2\leq c_0/(2C)$ to conclude the proof.
	\end{proof}
	As a consequence of the above lemma, in the result below, we deduce the positivity of the Hessian $\tilde L$ for fixed $x_0\in Q$.
	\begin{Lemma}\label{hsnfx}
		Let $Q'\subseteq Q$ and $x_0\in Q'$. Then for all $\phi\in W^{1,p}_0(Q')$ and $\psi\in L^q(Q')$ it holds that
		\begin{equation*}
			\int_{Q'}\tilde L(\bar{F}_0,\bai_0)\big[(\nabla\phi(x),\psi(x)),(\nabla\phi(x),\psi(x))\big]dx\geq c_0\int_{Q'}|\nabla\phi(x)|^2+|\psi(x)|^2dx,
		\end{equation*}
		where $\bar{F}_0=\baf(x_0)$ and $\bai_0=\bai(x_0)$.
	\end{Lemma}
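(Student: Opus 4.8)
The plan is to derive the pointwise positivity of $\tilde{L}$ by differentiating twice, at the origin of the test functions, the strong quasiconvexity inequality for $\tilde{e}$ established in Lemma \ref{teqc}. Since $(\nabla\phi,\psi)$ enters $\tilde{e}$ with unbounded amplitude, the rescaling argument is not directly available in $W^{1,p}_0(Q')\times L^q(Q')$; I would therefore first prove the inequality for bounded test functions and then pass to the general case by density.

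For the bounded case, fix $x_0\in Q'$, set $(\bar{F}_0,\bai_0)=(\baf(x_0),\bai(x_0))\in\overline{B(0,K)}$, and let $\phi\in C^\infty_c(Q')$, $\psi\in L^\infty(Q')$. For $t\in(0,1]$ apply Lemma \ref{teqc} at $(\bar{F}_0,\bai_0)$ with the test functions $t\phi$ and $t\psi$. Because $\phi$ has compact support, $\int_{Q'}\nabla\phi\,dx=0$, so one may subtract the vanishing term $t\,\tilde{e}_F(\bar{F}_0,\bai_0):\nabla\phi$ from the integrand on the left; the left-hand side then equals $\int_{Q'}\tilde{e}(\bar{F}_0+t\nabla\phi,\bai_0+t\psi)-\tilde{e}(\bar{F}_0,\bai_0)-t\,D\tilde{e}(\bar{F}_0,\bai_0):(\nabla\phi,\psi)\,dx$, which by the second-order Taylor formula with integral remainder ($\tilde{e}\in C^2$, since $|V_p|^2,|V_q|^2\in C^2$ for $p,q\geq2$) equals $\int_{Q'}\int_0^t(t-s)\,\tilde{L}(\bar{F}_0+s\nabla\phi,\bai_0+s\psi)[(\nabla\phi,\psi),(\nabla\phi,\psi)]\,ds\,dx$. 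Dividing through by $t^2$ and using $|V_p(z)|^2\geq|z|^2$, $|V_q(z)|^2\geq|z|^2$ on the right gives, for every $t\in(0,1]$, the inequality $\frac{1}{t^2}\int_{Q'}\int_0^t(t-s)\tilde{L}(\bar{F}_0+s\nabla\phi,\bai_0+s\psi)[(\nabla\phi,\psi),(\nabla\phi,\psi)]\,ds\,dx\geq c_0\int_{Q'}|\nabla\phi|^2+|\psi|^2\,dx$. As $\phi,\psi$ are bounded and $t\leq1$, the arguments of $\tilde{L}$ in the remainder stay in a fixed ball, so the integrand is dominated by $C(K,\|\nabla\phi\|_\infty,\|\psi\|_\infty)(|\nabla\phi(x)|^2+|\psi(x)|^2)\in L^1(Q')$ uniformly in $t$, and converges pointwise as $t\to0^+$ to $\tfrac12\tilde{L}(\bar{F}_0,\bai_0)[(\nabla\phi,\psi),(\nabla\phi,\psi)]$ by continuity of $\tilde{L}$. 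Dominated convergence then yields $\tfrac12\int_{Q'}\tilde{L}(\bar{F}_0,\bai_0)[(\nabla\phi,\psi),(\nabla\phi,\psi)]\,dx\geq c_0\int_{Q'}|\nabla\phi|^2+|\psi|^2\,dx$, which is in fact stronger than the asserted bound.

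Finally, for the density step, the map $(\nabla\phi,\psi)\mapsto\int_{Q'}\tilde{L}(\bar{F}_0,\bai_0)[(\nabla\phi,\psi),(\nabla\phi,\psi)]\,dx$ is a bounded quadratic form on $L^2(Q')\times L^2(Q')$, and since $Q'$ is bounded and $p,q\geq2$ the embeddings $W^{1,p}_0(Q')\hookrightarrow W^{1,2}_0(Q')$ and $L^q(Q')\hookrightarrow L^2(Q')$ are continuous, so both sides of the inequality are continuous with respect to convergence in $W^{1,p}_0(Q')\times L^q(Q')$; as $C^\infty_c(Q')$ is dense in $W^{1,p}_0(Q')$ and $L^\infty(Q')$ in $L^q(Q')$, the inequality extends to all admissible $(\phi,\psi)$. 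The one delicate point is the rescaling: the second-order remainder of $\tilde{e}$ must be dominated in $L^1(Q')$ uniformly in $t$, which is exactly why bounded test functions are treated first and why the zero-mean identity $\int_{Q'}\nabla\phi\,dx=0$ is invoked to discard the first-order term before letting $t\to0^+$.
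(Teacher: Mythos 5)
Your argument is essentially the paper's: it is the second-variation argument at the origin, reading off the Hessian $\tilde L(\baf_0,\bai_0)$ as the second derivative of the quasiconvexity inequality of Lemma~\ref{teqc}. The only difference is cosmetic rigour: you phrase it via Taylor's formula with integral remainder, dominated convergence on bounded test functions, and a density step, whereas the paper differentiates $I(\eps\phi,\eps\psi)$ directly under the integral sign; both routes yield the same conclusion.
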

	\begin{proof}
		The quasiconvexity of $\tilde{e}$, Lemma \ref{teqc}, says that  $I(\phi,\psi)\geq I(0,0)$ for all $\phi\in W^{1,p}_0(Q')$ and $\psi\in L^q(Q')$, where 
		\begin{align*}
			I(\phi,\psi)&:=\int_{Q'}\tilde e(\baf_0+\nabla\phi,\bai_0+\psi)-\tilde e(\baf_0,\bai_0)-\tilde e_\eta(\baf_0,\bai_0)\psi\\
			&-\frac{c_0}{2}\int_{Q'}|V_p(\nabla\phi)|^2+|V_q(\psi)|^2,
		\end{align*}
		and so we infer that $\frac{d^2}{d\eps^2}I(\eps\phi,\eps\psi)\Big|_{\eps=0}\geq 0$. However,
		\begin{align*}
			\frac{d}{d\eps}I(\eps\phi,\eps\psi)&:=\int_{Q'}\tilde e_F(\baf_0+\eps\nabla\phi,\bai_0+\eps\psi)\nabla\phi+\tilde e_\eta(\baf_0+\eps\nabla\phi,\bai_0+\eps\psi)\psi\\
			&-\tilde e_\eta(\baf_0,\bai_0)\psi-c_0\eps|\nabla\phi|^2-\frac{c_0}{2}p\eps^{p-1}|\nabla\phi|^p-c_0\eps |\psi|^2-\frac{c_0}{2}q\eps^{q-1}|\psi|^q,
		\end{align*}
		and so
		\begin{align*}
			\frac{d^2}{d\eps^2}I(\eps\phi,\eps\psi)=&\int_{Q'}\tilde e_{FF}(\baf_0+\eps\nabla\phi,\bai_0+\eps\psi)\nabla\phi:\nabla\phi+\tilde e_{\eta\eta}(\baf_0+\eps\nabla\phi,\bai_0+\eps\psi)\psi\cdot\psi\\
			&+2\tilde e_{F\eta}(\baf_0+\eps\nabla\phi,\bai_0+\eps\psi)\nabla\phi\cdot\psi\\
			&-c_0|\nabla\phi|^2-\frac{c_0}{2}p(p-1)\eps^{p-2}|\nabla\phi|^p-c_0|\psi|^2-\frac{c_0}{2}q(q-1)\eps^{q-2}|\psi|^q.
		\end{align*}
		We conclude that
		\begin{align*}
			0\leq \frac{d^2}{d\eps^2}I(\eps\phi,\eps\psi)\Big|_{\eps=0}= \int_{Q'}\tilde L(\bar{F}_0,\bai_0)\big[(\nabla\phi,\psi),(\nabla\phi,\psi)\big]-c_0|\nabla\phi|^2-c_0|\psi|^2.
		\end{align*}
	\end{proof}
	We are now ready to prove a G\aa rding-type inequality for the delocalised version of the Hessian $\tilde L$, which is crucial for the contradiction argument of the proof of Theorem \ref{thm:main}.
	\begin{Proposition}\label{prop:Daf}
		For every $\delta>0$ there exist constants $c>0$ and $C_{pen}:=C(\delta)>0$ such that
		\begin{equation*}
			\int_{Q'}\tilde L(\bar{F}(x),\bai(x))\big[(\nabla\phi,\psi),(\nabla\phi,\psi)\big]\geq  c(1-\delta)^2\int_{Q'}|\nabla\phi|^2+|\psi|^2-C_{pen}\int_{Q'}|\phi|^2
		\end{equation*}
		for all $\phi\in W^{1,p}(Q)$, $\psi\in L^q(Q)$ and $(\baf,\bai)\in \mathcal{U}_K$.
	\end{Proposition}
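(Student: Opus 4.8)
The plan is to prove Proposition~\ref{prop:Daf} by the classical Gårding argument: freeze the Hessian on a fine mesh, localise via a partition of unity, and invoke the pointwise positivity of Lemma~\ref{hsnfx}. The point that makes the constants uniform over $\mathcal{U}_K$ is that, since $e\in C^3$, the Hessian $\tilde{L}$ is continuous, hence uniformly continuous on the compact set $\overline{B(0,K)}$, with $M:=\sup_{\overline{B(0,K)}}|\tilde{L}|<\infty$, while every $(\baf,\bai)\in\mathcal{U}_K$ takes values in $\overline{B(0,K)}$ and shares the modulus of continuity $\omega$; composing, $x\mapsto\tilde{L}(\baf(x),\bai(x))$ has a modulus of continuity depending only on $\omega,e,K$. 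So, given $\delta>0$, I would set $\eps:=\tfrac12 c_0(1-\delta)^2$ with $c_0$ from Lemma~\ref{hsnfx} and choose a mesh size $\rho=\rho(\delta,e,K)$ with $|\tilde{L}(\baf(x),\bai(x))-\tilde{L}(\baf(y),\bai(y))|\le\eps$ whenever $|x-y|\le\rho$, uniformly in $(\baf,\bai)\in\mathcal{U}_K$.

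Next, cover $\overline{Q}$ by finitely many open cubes $Q_1,\dots,Q_N$ of diameter $<\rho$, fix points $x_j\in Q_j$, and take a smooth partition of unity $\{\theta_j\}$ subordinate to this cover with the normalisation $\sum_j\theta_j^2\equiv 1$ (set $\theta_j:=\chi_j(\sum_k\chi_k^2)^{-1/2}$ for an ordinary partition $\{\chi_j\}$); note $|\nabla\theta_j|\lesssim 1/\rho$, hence $\sum_j|\nabla\theta_j|^2\le C_\rho$. Using $\sum_j\theta_j^2\equiv 1$, the left-hand side equals $\sum_j\int_{Q'}\tilde{L}(\baf(x),\bai(x))[(\theta_j\nabla\phi,\theta_j\psi),(\theta_j\nabla\phi,\theta_j\psi)]$. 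For each $j$ I would: freeze, replacing $\tilde{L}(\baf(x),\bai(x))$ by the constant tensor $\tilde{L}(\baf(x_j),\bai(x_j))$ at the cost of $-\eps\int\theta_j^2(|\nabla\phi|^2+|\psi|^2)$; then write $\theta_j\nabla\phi=\nabla(\theta_j\phi)-\phi\nabla\theta_j$, expand the constant-coefficient bilinear form, apply Lemma~\ref{hsnfx} to the pair $(\theta_j\phi,\theta_j\psi)\in W^{1,p}_0(Q_j)\times L^q(Q_j)$ to bound $\int\tilde{L}(\baf(x_j),\bai(x_j))[(\nabla(\theta_j\phi),\theta_j\psi),(\nabla(\theta_j\phi),\theta_j\psi)]$ from below by $c_0\int|\nabla(\theta_j\phi)|^2+|\theta_j\psi|^2$, and absorb the cross term (bounded by $M$ via Cauchy--Schwarz) and the $|\phi\nabla\theta_j|^2$ terms using Young's inequality with parameter $\delta$ together with $|a+b|^2\ge(1-\delta)|a|^2-\delta^{-1}|b|^2$. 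This produces, for each $j$,
\[
\int_{Q'}\theta_j^2\tilde{L}(\baf(x_j),\bai(x_j))[(\nabla\phi,\psi),(\nabla\phi,\psi)]\ge c_0(1-\delta)^2\int_{Q'}\theta_j^2(|\nabla\phi|^2+|\psi|^2)-C(\delta,M,c_0)\int_{Q'}|\phi|^2|\nabla\theta_j|^2.
\]

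Summing over $j$ and using $\sum_j\theta_j^2\equiv 1$ together with $\sum_j|\nabla\theta_j|^2\le C_\rho$ yields $\int_{Q'}\tilde{L}(\baf,\bai)[(\nabla\phi,\psi),(\nabla\phi,\psi)]\ge(c_0(1-\delta)^2-\eps)\int_{Q'}|\nabla\phi|^2+|\psi|^2-C(\delta,M,c_0)C_\rho\int_{Q'}|\phi|^2$; with $\eps=\tfrac12c_0(1-\delta)^2$ this gives the proposition with $c=c_0/2$ and $C_{pen}=C(\delta,M,c_0)C_\rho=C_{pen}(\delta,e,K)$.

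The main obstacle is the bookkeeping of the hierarchy of small parameters (the freezing error $\eps$, the Young parameter, and the $|a+b|^2$ parameter, all chosen in terms of $\delta$, $c_0$, $M$) so that the good term retains the clean coefficient $c(1-\delta)^2$ with $c$ independent of $\delta$, and so that every constant is genuinely uniform over $\mathcal{U}_K$ — which is exactly what the common modulus $\omega$ and the uniform continuity of $\tilde{L}$ on $\overline{B(0,K)}$ provide. A minor point requiring care is that the localised test functions $\theta_j\phi$ indeed lie in $W^{1,p}_0$ of the small cube (true since $\theta_j$ is compactly supported in $Q_j$), and that the partitioned integrals match the full-cube integrals in Lemma~\ref{hsnfx} cleanly when $Q'=Q$, which is the case relevant for Theorem~\ref{thm:main}.
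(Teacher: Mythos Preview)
Your proposal is correct and follows essentially the same approach as the paper: freeze the Hessian on a fine mesh (uniform over $\mathcal{U}_K$ thanks to the common modulus $\omega$ and the uniform continuity of $\tilde L$ on $\overline{B(0,K)}$), localise via a smooth partition of unity with $\sum_j\theta_j^2\equiv 1$, apply Lemma~\ref{hsnfx} to the pairs $(\theta_j\phi,\theta_j\psi)\in W^{1,p}_0(Q_j)\times L^q(Q_j)$, and absorb the cross terms arising from $\theta_j\nabla\phi=\nabla(\theta_j\phi)-\phi\otimes\nabla\theta_j$ with Young's inequality and the estimate $|a+b|^2\ge(1-\delta)|a|^2-C(\delta)|b|^2$. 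The paper's proof differs only in the bookkeeping of the small parameters (its freezing tolerance is $\tfrac12 c\delta(1-\delta)$ rather than your $\tfrac12 c_0(1-\delta)^2$), not in substance.
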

	\begin{proof}
		Fix $\delta>0$ and pick a finite cover $\{Q_i\}_i,\,Q_i:=Q(x_i,r_i)\subseteq Q$ such that
		\begin{align*}
			&|\tilde e_{FF}(\baf(x),\bai(x))-\tilde e_{FF}(\baf(x_i),\bai(x_i))|+|\tilde e_{\eta\eta}(\baf(x),\bai(x))-\tilde e_{\eta\eta}(\baf(x_i),\bai(x_i))|\\
			&+2|\tilde e_{F\eta}(\baf(x),\bai(x))-\tilde e_{F\eta}(\baf(x_i),\bai(x_i))|\leq \frac{1}{2}c\delta(1-\delta).
		\end{align*}
		Note that since $(\baf(x),\bai(x))\in \mathcal{U}_K$ (bounded with uniform modulus of continuity) and $\tilde e\in C^2$ the cover can be chosen uniformly. 
		\par
		Now choose a partition of unity $(\rho_i)_i,\,\rho_i\in C^\infty_c(Q_i)$ and $\sum_i\rho_i^2=1$. Given $\phi\in W^{1,p}(Q)$ and $\psi\in L^q(Q)$, we infer that
		\begin{align*}
			&\sum_i\int_{Q_i}\tilde L(\baf,\bai)[(\rho_i\nabla\phi,\rho_i\psi),(\rho_i\nabla\phi,\rho_i\psi)]-\tilde L(\baf_i,\bai_i)[(\rho_i\nabla\phi,\rho_i\psi),(\rho_i\nabla\phi,\rho_i\psi)]\\
			&\geq -\frac{c}{2}\delta(1-\delta)\int_Q\big(|\nabla\phi|^2+|\nabla\phi||\psi|+|\psi|^2\big)\geq -c\delta(1-\delta)\int_Q\big(|\nabla\phi|^2+|\psi|^2\big),
		\end{align*}
		and so,
		\begin{align*}
			&\int_Q\tilde L(\baf,\bai)[(\nabla\phi,\psi),(\nabla\phi,\psi)]=\\
			&\sum_i\int_{Q_i}\tilde L(\baf,\bai)[(\rho_i\nabla\phi,\rho_i\psi),(\rho_i\nabla\phi,\rho_i\psi)]-\tilde L(\baf_i,\bai_i)[(\rho_i\nabla\phi,\rho_i\psi),(\rho_i\nabla\phi,\rho_i\psi)]\\
			&+\sum_i\int_{Q_i}\tilde L(\baf_i,\bai_i)[(\rho_i\nabla\phi,\rho_i\psi),(\rho_i\nabla\phi,\rho_i\psi)]=:\sum_iI_i+\sum_iII_i.
		\end{align*}
		Since we already bounded the term $\sum_iI_i$ from below, it remains to prove a similar bound for the second term. Since $\nabla(\rho_i\phi)=\rho_i\nabla\phi+\phi\otimes\nabla\rho_i$ we infer that
		\begin{align*}
			II_i=&\int_{Q_i}\tilde e_{FF}(\baf_i,\bai_i)\nabla(\rho_i\phi):\nabla(\rho_i\phi)+\tilde e_{\eta\eta}(\baf_i,\bai_i)(\rho_i\psi):(\rho_i\psi)\\
			&+2\tilde e_{F\eta}(\baf_i,\bai_i)\nabla(\rho_i\phi):(\rho_i\psi)+\int_{Q_i}\tilde e_{FF}(\baf_i,\bai_i)(\phi\nabla\rho_i):(\phi\otimes\nabla\rho_i)\\
			&-\int_{Q_i}\tilde e_{FF}(\baf_i,\bai_i)\nabla(\rho_i\phi):(\phi\otimes\nabla\rho_i)-\int_{Q_i}\tilde e_{F\eta}(\baf_i,\bai_i)(\phi\otimes\nabla\rho_i):(\rho_i\psi)=:\sum_{j=1}^4T_i^j.
		\end{align*}
		In particular, for $T
		^1_i$ we apply Lemma \ref{hsnfx} testing with $\rho_i\phi\in W^{1,p}_0(Q_i)$ and $\rho_i\psi\in L^q(Q_i)$ to infer that
		\begin{equation*}
			T_i^1\geq c_0\int_{Q_i}\big(|\nabla(\rho_i\phi)|^2+|\rho_i\psi|^2\big).
		\end{equation*}
		For the remaining three terms, by Young's inequality, we find that
		\begin{align*}
			T_i^2&\geq -c\|\nabla\rho_i\|_\infty\int_{Q_i}|\phi|^2,\\
			T^3_i&\geq -c(1-\delta)^2\int_{Q_i}|\nabla(\rho_i\phi)|^2-2C_1(\delta)\int_{Q_i}|\phi|^2,\\
			T_i^4&\geq -c\frac{1+\delta^2}{2}\int_{Q_i}|\rho_i\psi|^2-2C_2(\delta)\int_{Q_i}|\phi|^2.
		\end{align*}
		Finally combining all the above and since
		\begin{align*}
			|\nabla(\rho_i\phi)|^2=|\rho_i\nabla\phi+\phi\otimes\nabla\rho_i|^2&\leq \frac{1}{2}|\rho_i\nabla\phi|^2+\frac{1}{2}|\phi\otimes\nabla\rho_i|^2,\\
			|\nabla(\rho_i\phi)|^2=|\rho_i\nabla\phi+\phi\otimes\nabla\rho_i|^2&\geq(1-\delta)\rho_i^2|\nabla\phi|^2-C(\delta)|\phi\otimes\nabla\rho_i|^2,
		\end{align*}
		we conclude that
		\begin{align*}
			\int_Q\tilde L(\baf,\bai)[(\nabla\phi,\psi),(\nabla\phi,\psi)]\gtrsim (1-\delta)^2\int_{Q'}|\nabla\phi|^2+|\psi|^2-C(\delta)\int_{Q'}|\phi|^2.
		\end{align*}
	\end{proof}
	The following lemma shows that the function $\tilde{e}(\cdot|\cdot)$ has convex-like behaviour on the wave cone when it is integrated over cubes with sufficiently small radius and plays a crucial role in the proof of Proposition \ref{prop: lgrd}.
	\begin{Lemma}\label{zhang}
		There exists $R=R(c_0,K)>0$ such that for all $x_0\in Q$ the inequality 
		\begin{align*}
			\int_{Q(x_0,R)}\tilde e(\baf(x)+\nabla\phi,\bai(x)+\psi|\baf(x),\bai(x))\geq \frac{c_0}{4}\int_{Q(x_0,R)}|V_p(\nabla\phi)|^2+|V_q(\psi)|^2
		\end{align*}
		holds for all $\phi\in W^{1,p}_0(Q(x_0,r))$ and $\psi\in L^q(Q(x_0,r))$ with $r\leq R$.
	\end{Lemma}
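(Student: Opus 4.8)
The plan is to freeze the variable base point $(\baf(x),\bai(x))$ at the centre $x_0$ of the cube, reducing matters to the constant-coefficient quasiconvexity of $\tilde e$ established in Lemma \ref{teqc}, and then to absorb the resulting error using Lemma \ref{growths}(b) together with the modulus of continuity of $(\baf,\bai)$. First I would observe that, for $\phi\in W^{1,p}_0(Q(x_0,r))$ and $\psi\in L^q(Q(x_0,r))$ extended by zero, both $\nabla\phi$ and $\psi$ vanish on $Q(x_0,R)\setminus Q(x_0,r)$, where then the relative quantity $\tilde e(\baf+\nabla\phi,\bai+\psi\,|\,\baf,\bai)$ as well as $|V_p(\nabla\phi)|^2$ and $|V_q(\psi)|^2$ all vanish; hence both sides of the asserted inequality are unchanged if $Q(x_0,R)$ is replaced by $Q(x_0,r)$, and it suffices to argue on $Q(x_0,r)$.

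For the frozen constant state $(\lambda_1,\lambda_2):=(\baf(x_0),\bai(x_0))\in\overline{B(0,K)}$ and $\phi\in W^{1,p}_0(Q(x_0,r))$, the term $\int_{Q(x_0,r)}\tilde e_F(\lambda_1,\lambda_2):\nabla\phi$ vanishes since the state is constant and $\phi$ has zero trace, so Lemma \ref{teqc} yields
\[
\int_{Q(x_0,r)}\tilde e(\lambda_1+\nabla\phi,\lambda_2+\psi\,|\,\lambda_1,\lambda_2)\ \geq\ \bar c_0\int_{Q(x_0,r)}|V_p(\nabla\phi)|^2+|V_q(\psi)|^2,
\]
with $\bar c_0\geq c_0/2$ the quasiconvexity constant of that lemma. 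Writing, for $x\in Q(x_0,r)$,
\[
\tilde e(\baf(x)+\nabla\phi,\bai(x)+\psi\,|\,\baf(x),\bai(x))=\tilde e(\lambda_1+\nabla\phi,\lambda_2+\psi\,|\,\lambda_1,\lambda_2)+\mathcal{E}(x),
\]
I would bound $\mathcal{E}(x)$ pointwise by Lemma \ref{growths}(b), applied to $\tilde e$ (which satisfies the hypotheses underlying that lemma up to smaller constants, as noted when $\tilde e$ was introduced): fixing $\delta:=\bar c_0-\tfrac{c_0}{4}>0$, there is $R_1=R_1(c_0,K)>0$ such that $|\mathcal{E}(x)|\leq\delta(|V_p(\nabla\phi(x))|^2+|V_q(\psi(x))|^2)$ whenever $|(\baf(x),\bai(x))-(\lambda_1,\lambda_2)|<R_1$.

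It remains to connect the smallness of $|(\baf(x),\bai(x))-(\lambda_1,\lambda_2)|$ with the smallness of the cube. Since $(\baf,\bai)\in\mathcal{U}_K$, for $x\in Q(x_0,r)\subseteq Q(x_0,R)$ one has $|(\baf(x),\bai(x))-(\lambda_1,\lambda_2)|\leq\omega(|x-x_0|)\leq\omega(\mathrm{diam}\,Q(x_0,R))$, and $\mathrm{diam}\,Q(x_0,R)\to 0$ as $R\to 0^{+}$, independently of $x_0$; so I would fix $R=R(c_0,K)>0$ small enough that $\omega(\mathrm{diam}\,Q(x_0,R))<R_1$ (the dependence on $\omega$ being suppressed as agreed). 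Then for every $r\leq R$, integrating the decomposition over $Q(x_0,r)$ and using the two displayed bounds gives
\[
\int_{Q(x_0,r)}\tilde e(\baf+\nabla\phi,\bai+\psi\,|\,\baf,\bai)\ \geq\ (\bar c_0-\delta)\int_{Q(x_0,r)}|V_p(\nabla\phi)|^2+|V_q(\psi)|^2\ =\ \frac{c_0}{4}\int_{Q(x_0,r)}|V_p(\nabla\phi)|^2+|V_q(\psi)|^2,
\]
which, together with the reduction of the first paragraph, is the claim. There is no compactness or variational obstruction here beyond Lemmas \ref{teqc} and \ref{growths}; the only delicate point is the bookkeeping — checking that $\tilde e$ meets the hypotheses of Lemma \ref{growths} and that the final radius $R$ depends only on $c_0$ and $K$ (through $\delta$, through $R_1$, and through $\omega$, whose dependence is notationally absorbed per the convention stated before Lemma \ref{growths}).
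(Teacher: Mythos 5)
Your proposal is correct and follows essentially the same route as the paper: freeze $(\baf,\bai)$ at $x_0$, invoke the quasiconvexity of $\tilde e$ at the frozen state (Lemma \ref{teqc}, noting $\int\tilde e_F(\baf_0,\bai_0):\nabla\phi=0$), and absorb the freezing error via Lemma \ref{growths}(b) combined with the uniform modulus of continuity of $(\baf,\bai)\in\mathcal{U}_K$, with the choice $\delta\sim c_0/4$. The opening reduction from $Q(x_0,R)$ to $Q(x_0,r)$ is harmless extra bookkeeping that the paper handles implicitly.
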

	\begin{proof}
		Observe that by Lemma \ref{growths} (b), letting $\delta=c_0/4$ we find $R=R(c_0,\tilde e,K)$ such that for all $(\baf,\bai)\in \mathcal{U}_K$ and whenever $|x-x_0|<R$
		\begin{equation*}
			\big|\tilde e(\baf+\nabla\phi,\bai+\psi|\baf,\bai)-\tilde e(\baf_0+\nabla\phi,\bai_0+\psi|\baf_0,\bai_0)\big|\leq \frac{c_0}{4}\big(|V_p(\nabla\phi)|^2+|V_q(\psi)|^2\big).
		\end{equation*}
		Note here that $\tilde e$ satisfies the growths of Lemma \ref{growths}. So for $\phi\in W^{1,p}_0(Q(x_0,r))$ and $\psi\in L^q(Q(x_0,r))$ with $r\leq R$ we infer that 
		\begin{align*}
			\int_{Q(x_0,R)}\tilde e(\baf+\nabla\phi,\bai+\psi|\baf,\bai)\geq \int_{Q(x_0,R)}&e(\baf_0+\nabla\phi,\bai_0+\psi|\baf_0,\bai_0)\\
			&-\frac{c_0}{4}\big(|V_p(\nabla\phi)|^2+|V_q(\psi)|^2\big)
		\end{align*}
		\begin{align*}
			=\int_{Q(x_0,R)}e(\baf_0+\nabla\phi,\bai_0+\psi)-e(\baf_0,\bai_0)-&e_F(\baf_0,\bai_0)\nabla\phi-e_\eta(\baf_0,\bai_0)\psi\\
			-\frac{c_0}{4}\big(|V_p(\nabla\phi)|^2+|V_q(\psi)|^2\big)
			&\geq \frac{c_0}{4}\big(|V_p(\nabla\phi)|^2+|V_q(\psi)|^2\big),
		\end{align*}
		where in the last inequality we used Lemma \ref{teqc} and that $\int_{Q(x_0,R)}\nabla\phi=0$.
	\end{proof}
	We next prove a central result which can be seen as a limiting version of a G\aa rding inequality and replaces the quasiconvexity condition in the proof of Theorem \ref{thm:main}.
	\begin{Proposition}\label{prop: lgrd}
		Let $(\baf_k,\bai_k)_k\subseteq \mathcal{U}_K$, $(\phi_k)_k\subseteq W^{1,p}(Q)$, $(\psi_k)_k\subseteq L^q(Q)$ and $(a_k)_k\subseteq\mathbb{R}$ such that 
		\begin{itemize}
			\item $\,a_k^{-1}V_p(\phi_k) \rightarrow 0$ strongly in $L^2(Q)$,
			\item $\left(a_k^{-1}V_p(\nabla\phi_k)\right)_k$ is bounded in $L^2(Q)$,
			\item $\left(a_k^{-1}V_q(\psi_k)\right)_k$ is bounded in $L^2(Q)$.
		\end{itemize}
		Then,
		\begin{align*}
			\liminf_k\frac{c_0}{4}a_k^{-2}\int_Q|V_p(\nabla\phi_k)|^2+&|V_q(\psi_k)|^2\\
			&\leq \liminf_k a_k^{-2}\int_Q\tilde e(\baf_k+\nabla\phi_k,\bai_k+\psi_k|\baf_k,\bai_k).
		\end{align*}
	\end{Proposition}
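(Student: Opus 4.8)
The plan is to carry out a blow-up at generic points of a limiting concentration measure and reduce the assertion to the cube-wise estimate of Lemma~\ref{zhang}. After passing to a subsequence I may assume the right-hand side of the claimed inequality is a genuine limit, and, along a further subsequence, that the nonnegative Radon measures $\sigma_k:=a_k^{-2}\big(|V_p(\nabla\phi_k)|^2+|V_q(\psi_k)|^2\big)\mathcal{L}^d$ converge weakly-$*$ on the (compact) torus $Q$ to some $\sigma\in\mathcal{M}^+(Q)$ and the signed measures $\lambda_k:=a_k^{-2}\,\tilde e(\baf_k+\nabla\phi_k,\bai_k+\psi_k\,|\,\baf_k,\bai_k)\mathcal{L}^d$ converge weakly-$*$ to some $\lambda$; both sequences are bounded by the standing hypotheses together with the pointwise bound $|\tilde e(\lambda+\xi\,|\,\lambda)|\le C\big(|V_p(\xi_1)|^2+|V_q(\xi_2)|^2\big)$ of Lemma~\ref{growths}(a), valid for $\tilde e$ with a constant uniform over $\lambda\in\overline{B(0,K)}$. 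The same pointwise bound gives $C\sigma_k\pm\lambda_k\ge0$, hence $C\sigma\pm\lambda\ge0$, so $|\lambda|\le C\sigma$; in particular $\lambda\ll\sigma$ and $\lambda=\tfrac{d\lambda}{d\sigma}\sigma$. Testing against $1\in C(Q)$ gives $\sigma(Q)=\lim_k\sigma_k(Q)$ and $\lambda(Q)=\lim_k\lambda_k(Q)$, so it suffices to prove $\lambda\ge\tfrac{c_0}{4}\sigma$ as measures, and by the Besicovitch differentiation theorem this reduces to showing
\[
\liminf_{r\to0}\frac{\lambda(Q(x_0,r))}{\sigma(Q(x_0,r))}\ \ge\ \frac{c_0}{4}\qquad\text{for }\sigma\text{-a.e. }x_0\in\operatorname{supp}\sigma.
\]

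Fix such an $x_0$ and a radius $r\le R$, where $R$ is the threshold of Lemma~\ref{zhang} and $\sigma(\partial Q(x_0,r))=0$ (true for all but countably many $r$). For $N\in\mathbb{N}$ pick a cut-off $\rho_N\in C^\infty_c(Q(x_0,r))$ with $\rho_N\equiv1$ on $Q(x_0,r(1-1/N))$ and $|\nabla\rho_N|\le CN/r$, and set $\phi_k^N:=\rho_N\phi_k\in W^{1,p}_0(Q(x_0,r))$, leaving $\psi_k$ untouched --- this is legitimate because Lemma~\ref{zhang} imposes no boundary condition on the $\psi$-slot. Applying Lemma~\ref{zhang} on $Q(x_0,r)$ with the pair $(\baf_k,\bai_k)\in\mathcal{U}_K$ yields
\[
\frac{c_0}{4}\int_{Q(x_0,r)}|V_p(\nabla\phi_k^N)|^2+|V_q(\psi_k)|^2\ \le\ \int_{Q(x_0,r)}\tilde e(\baf_k+\nabla\phi_k^N,\bai_k+\psi_k\,|\,\baf_k,\bai_k).
\]
Since $\phi_k^N=\phi_k$ on the inner cube, I split both integrals over $Q(x_0,r(1-1/N))$ and the shell $S_N:=Q(x_0,r)\setminus Q(x_0,r(1-1/N))$, bound the relative quantity on $S_N$ by $C\big(|V_p(\nabla\phi_k)|^2+|V_p(\nabla\phi_k^N)|^2+|V_q(\psi_k)|^2\big)$ via Lemma~\ref{growths}(a), and use the cut-off bound $|V_p(\nabla\phi_k^N)|^2\le C\big(|V_p(\nabla\phi_k)|^2+(N/r)^p|V_p(\phi_k)|^2\big)$ (valid for $p\ge2$) to obtain
\[
\lambda_k(Q(x_0,r))\ \ge\ \frac{c_0}{4}\,\sigma_k(Q(x_0,r))\ -\ C\,\sigma_k(\overline{S_N})\ -\ C\Big(\frac{N}{r}\Big)^{p}a_k^{-2}\|V_p(\phi_k)\|_{L^2(Q)}^2.
\]

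It then remains to pass to the limit in the right order. Dividing the last inequality by $\sigma_k(Q(x_0,r))$ and letting $k\to\infty$: the final term vanishes because $a_k^{-1}V_p(\phi_k)\to0$ in $L^2(Q)$; since $r$ is a continuity radius, $\sigma_k(Q(x_0,r))\to\sigma(Q(x_0,r))>0$ and $\lambda_k(Q(x_0,r))\to\lambda(Q(x_0,r))$, while $\limsup_k\sigma_k(\overline{S_N})\le\sigma(\overline{S_N})$ by upper semicontinuity on closed sets. Letting $N\to\infty$, $\overline{S_N}\downarrow\partial Q(x_0,r)$ and so $\sigma(\overline{S_N})\to\sigma(\partial Q(x_0,r))=0$, giving $\lambda(Q(x_0,r))\ge\tfrac{c_0}{4}\sigma(Q(x_0,r))$; a final $r\to0$ along continuity radii produces the displayed inequality, and hence the proposition.

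The only genuine obstacle is the boundary mismatch: $(\phi_k,\psi_k)$ are periodic on $Q$ but not compactly supported in the small cubes, so Lemma~\ref{zhang} cannot be invoked directly and a cut-off is forced. The hypotheses of the proposition are designed precisely for this --- the strong convergence $a_k^{-1}V_p(\phi_k)\to0$ in $L^2$ absorbs the cost of cutting off $\phi_k$ (which scales like $(N/r)^p\|V_p(\phi_k)\|_{L^2}^2$), while no modification of $\psi_k$ is needed, and this asymmetry is essential since there is no analogous smallness for $\psi_k$. The residual error $C\sigma_k(\overline{S_N})$ is a real loss that is controlled only because the shell can be made arbitrarily thin after the $k$-limit. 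One should finally keep in mind that the constants produced by Lemma~\ref{zhang} and Lemma~\ref{growths}(a) are uniform over $\mathcal{U}_K$ and $\overline{B(0,K)}$, so the $k$-dependence of $(\baf_k,\bai_k)$ causes no difficulty.
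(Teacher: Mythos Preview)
Your proof is correct and reaches the same conclusion, but the packaging differs from the paper's in two respects worth noting.

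First, the paper works with a single limiting measure (your $\sigma$), fixes a finite cover of $Q$ by small cubes $Q(x_j,r_j)$ with $\sigma$-null boundaries, applies Lemma~\ref{zhang} on each, sums, and then sends the shell parameter to~$1$. You instead introduce a second limiting measure $\lambda$ for the relative energy, use $|\lambda|\le C\sigma$ to get $\lambda\ll\sigma$, and reduce via Besicovitch differentiation to a single-cube density inequality at $\sigma$-a.e.\ point. Both arguments hinge on the same mechanism --- localisation to a scale where Lemma~\ref{zhang} applies, cut-off, and absorption of the shell error by the $\sigma$-null boundary condition --- so the difference is largely organisational: your route trades the cover/summation bookkeeping for some measure-theoretic machinery.

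Second, and more interestingly, you cut off only $\phi_k$ and leave $\psi_k$ untouched, correctly observing that Lemma~\ref{zhang} requires $\phi\in W^{1,p}_0(Q(x_0,r))$ but merely $\psi\in L^q(Q(x_0,r))$. The paper instead cuts off both, writing $\rho_j\psi_k$; your version is a genuine simplification and makes explicit the asymmetry built into the hypotheses (strong smallness is available only for $a_k^{-1}V_p(\phi_k)$, not for $\psi_k$). A minor stylistic point: dividing by $\sigma_k(Q(x_0,r))$ before passing to the limit is unnecessary --- you can take $k\to\infty$ directly in the undivided inequality, which avoids worrying about $\sigma_k(Q(x_0,r))=0$ for small $k$ --- but this does not affect correctness since $\sigma_k(Q(x_0,r))\to\sigma(Q(x_0,r))>0$.
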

	\begin{proof}
		Since $\left(a_k^{-2}|V_p(\nabla\phi_k)|^2+a_k^{-2}|V_q(\psi_k)|^2\right)_k$ is bounded in $L^1(Q)$ we may assume that (up to a subsequence)
		\begin{equation*}
			a_k^{-2}|V_p(\nabla\phi_k)|^2+a_k^{-2}|V_q(\psi_k)|^2 \mathcal{L}^d\mres Q \overset{\ast}\rightharpoonup \mu,\quad\mbox{in }\mathcal{M}(Q) = \left(C(Q)\right)^\ast.
		\end{equation*}
		Since $\mu$ is a positive measure, there can be at most a countable number of hyperplanes parallel to the coordinate axes which admit posotive $\mu$-measure. Hence, we can extract a finite cover of $Q$ by non overlapping cubes $Q_{r_j}:=Q(x_j,r_j)$ with the property that $r_j<R$, so that Lemma \ref{zhang} applies and that
		\begin{equation}\label{eq:muzero}
			\mu( \partial Q(x_j,r_j)) = 0.
		\end{equation}
		Next, consider cut-off functions $\rho_j \in C^\infty_c(Q(x_j,r_j))$ such that for $\lambda\in(0,1)$
		\[
		\mathbbm{1}_{Q(x_j,\lambda r_j)} \leq \rho_j \leq \mathbbm{1}_{Q(x_j, r_j)},\,\, \|\nabla\rho_j\|_{L^{\infty}(Q)}\leq \frac{C}{1-\lambda}.
		\]
		For simplicity, we denote $Q_{r_j}:=Q(x_j,r_j)$ and $Q_{\lambda r_j}:=Q(x_j,\lambda r_j)$. We now apply Lemma \ref{zhang} for the functions $\rho_j\phi_k\in W^{1,p}_0(Q_{ r_j})$ and $\rho_j\psi_k\in L^q(Q_{ r_j})$ to find that
		\begin{align*}
			\frac{c_0}{4}\int_{Q_{r_j}}|V_p(\nabla(\rho_j\phi_k))|^2+|V_q(\rho_j\psi_k)|^2\leq \int_{Q_{r_j}}\tilde e(\baf_k+\nabla(\rho_j\phi_k),\bai_k+\rho_j\psi_k|\baf_k,\bai_k),
		\end{align*}
		where $(\baf_k,\bai_k)\in \mathcal{U}_K$. Thus by Lemma \ref{growths} (a) and for $C=C(\tilde e,K)$, it holds that
		\begin{align*}
			&\frac{c_0}{4}\int_{Q_{\lambda r_j}}|V_p(\nabla\phi_k)|^2+|V_q(\psi_k)|^2+\frac{c_0}{4}\int_{Q_{r_j}\setminus Q_{\lambda r_j}}|V_p(\nabla(\rho_j\phi_k))|^2+|V_q(\rho_j\psi_k)|^2\\
			&\leq \int_{Q_{\lambda r_j}}\tilde e(\baf_k+\nabla\phi_k,\bai_k+\psi_k|\baf_k,\bai_k)\\
			&\hspace{4cm}+\int_{Q_{r_j}\setminus Q_{\lambda r_j}}\tilde e(\baf_k+\nabla(\rho_j\phi_k),\bai_k+\rho_j\psi_k|\baf_k,\bai_k)\\
			&\leq \int_{Q_{\lambda r_j}}\tilde e(\baf_k+\nabla\phi_k,\bai_k+\psi_k|\baf_k,\bai_k)\\
			&\hspace{4cm}+C\int_{Q_{r_j}\setminus Q_{\lambda r_j}}|V_p(\nabla(\rho_j\phi_k))|^2+|V_q(\rho_j\psi_k)|^2\\
			&\leq \int_{Q_{\lambda r_j}}\tilde e(\baf_k+\nabla\phi_k,\bai_k+\psi_k|\baf_k,\bai_k)\\
			&\hspace{3.7cm}+C\int_{Q_{r_j}\setminus Q_{\lambda r_j}}|V_p(\nabla\phi_k)|^2+|V_q(\psi_k)|^2+\Big|V_p\Big(\frac{\phi_k}{1-\lambda}\Big)\Big|^2,
		\end{align*}
		where in the last inequality we used the definition of the cut-offs and the fact that $\nabla(\rho_j\phi_k)=\phi_k\otimes\nabla\rho_j+\rho_j\nabla\phi_k$. We observe that the second term of the left hand side is positive and so by summing over $j$  we infer that
		\begin{align*}
			&\frac{c_0}{4}\int_Q|V_p(\nabla\phi_k)|^2+|V_q(\psi_k)|^2-\frac{c_0}{4}\sum_j\int_{Q_{r_j}\setminus Q_{\lambda r_j}}|V_p(\nabla\phi_k)|^2+|V_q(\psi_k)|^2\\
			&\leq \int_Q\tilde e(\baf_k+\nabla\phi_k,\bai_k+\psi_k|\baf_k,\bai_k)-\sum_j\int_{Q_{r_j}\setminus Q_{\lambda r_j}}\tilde e(\baf_k+\nabla\phi_k,\bai_k+\psi_k|\baf_k,\bai_k)\\
			&+C\sum_j\int_{Q_{r_j}\setminus Q_{\lambda r_j}}|V_p(\nabla\phi_k)|^2+|V_q(\psi_k)|^2+\Big|V_p\Big(\frac{\phi_k}{1-\lambda}\Big)\Big|^2.
		\end{align*}
		Using again Lemma \ref{growths} (a) we deduce that
		\begin{align*}
			&\frac{c_0}{4}\int_Q|V_p(\nabla\phi_k)|^2+|V_q(\psi_k)|^2\leq \int_Q\tilde e(\baf_k+\nabla\phi_k,\bai_k+\psi_k|\baf_k,\bai_k)\\
			&\hspace{3cm}+C\sum_j\int_{Q_{r_j}\setminus Q_{\lambda r_j}}|V_p(\nabla\phi_k)|^2+|V_q(\psi_k)|^2+\Big|V_p\Big(\frac{\phi_k}{1-\lambda}\Big)\Big|^2.
		\end{align*}
		Multiplying with $a_k^{-2}$ and taking the limit over $k$ we conclude that
		\begin{align*}
			\liminf_k \frac{c_0}{4}a_k^{-2}\int_Q|V_p(\nabla\phi_k)|^2&+|V_q(\psi_k)|^2&\\
			&\leq a_k^{-2} \liminf_k\int_Q\tilde e(\baf_k+\nabla\phi_k,\bai_k+\psi_k|\baf_k,\bai_k)\\
			&\hspace{1.5cm}+C\sum_j\mu\big(\bar Q_{r_j}\setminus Q_{\lambda r_j}\big).
		\end{align*}
		Finally, send $\lambda\to 1$ to complete the proof via \eqref{eq:muzero}.
	\end{proof}
	The G\aa rding inequality, Theorem \ref{theorem:3Garding}, follows as a consequence of Theorem \ref{thm:main} below which forms the core of this section.
	\begin{Theorem}\label{thm:main}
		Assume that $e$ satisfies $(H_1),(H_2),(H_3)$. There exists $\epsilon_0>0$ and constants $\tilde C_0=\tilde C_0(e,K)>0$, $\tilde C_1=\tilde C_1(e,K)>0$ such that for all $(\baf,\bai)\in \mathcal{U}_K$, $\phi\in W^{1,p}_0(Q)$ and $\psi\in L^q(Q)$ with $\int_Q\psi=0$ and $\|\phi\|_{L^p(Q)}<\epsilon_0$ it holds that
		\begin{align*}
			\int_Q&\Big(|V_p(\nabla\phi(x))|^2+|V_q(\psi(x))|^2\Big)dx \\
			&\leq \tilde C_0\int_Q e(\baf(x)+\nabla\phi(x),\bai(x)+\psi(x)|\baf(x),\bai(x))dx+ \tilde C_1\int_Q|V_p(\phi(x))|^2dx.
		\end{align*}
	\end{Theorem}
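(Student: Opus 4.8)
The plan is to argue by contradiction, with the limiting G\aa rding inequality of Proposition~\ref{prop: lgrd} playing the role that the quasiconvexity of $e$ would play in a direct argument. \emph{Step 1 (the contradiction hypothesis).} Suppose the conclusion fails. Negating it, for every $n\in\mathbb{N}$ I apply the failure with $\epsilon_0=1/n$ and $\tilde C_0=\tilde C_1=n$ to produce $(\baf_n,\bai_n)\in\mathcal{U}_K$, $\phi_n\in W^{1,p}_0(Q)$ and $\psi_n\in L^q(Q)$ with $\int_Q\psi_n=0$, $\|\phi_n\|_{L^p(Q)}<1/n$, and
\begin{equation*}
\int_Q|V_p(\nabla\phi_n)|^2+|V_q(\psi_n)|^2 > n\int_Q e(\baf_n+\nabla\phi_n,\bai_n+\psi_n|\baf_n,\bai_n)+n\int_Q|V_p(\phi_n)|^2.
\end{equation*}
I set $a_n^2:=\int_Q|V_p(\nabla\phi_n)|^2+|V_q(\psi_n)|^2$; then $a_n>0$, since $a_n=0$ would force $\nabla\phi_n\equiv0$ and $\psi_n\equiv0$, hence $\phi_n\equiv0$ by the boundary condition, contradicting the strict inequality. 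Dividing by $a_n^2$ gives the normalisations
\begin{equation*}
a_n^{-2}\!\int_Q|V_p(\nabla\phi_n)|^2+|V_q(\psi_n)|^2=1,\qquad a_n^{-2}\!\int_Q e(\baf_n+\nabla\phi_n,\bai_n+\psi_n|\baf_n,\bai_n)<\tfrac1n,\qquad a_n^{-2}\!\int_Q|V_p(\phi_n)|^2<\tfrac1n,
\end{equation*}
so that $a_n^{-1}V_p(\phi_n)\to0$ in $L^2(Q)$, while $(a_n^{-1}V_p(\nabla\phi_n))_n$ and $(a_n^{-1}V_q(\psi_n))_n$ are bounded in $L^2(Q)$ --- precisely the hypotheses of Proposition~\ref{prop: lgrd}.

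\emph{Step 2 (from $e$ to $\tilde e$).} Next I would transfer the smallness of the $e$-term to $\tilde e$. Since $p,q\geq 2$, the maps $z\mapsto|V_p(z)|^2=|z|^p+|z|^2$ and $z\mapsto|V_q(z)|^2$ are $C^1$ and convex, so their relative quantities --- obtained by subtracting the tangent plane of $|V_p(\cdot)|^2$ at $\baf_n$, and of $|V_q(\cdot)|^2$ at $\bai_n$ --- are nonnegative. As $\tilde e(z_1,z_2)=e(z_1,z_2)-C_1|V_p(z_1)|^2-C_2|V_q(z_2)|^2$ with $C_1,C_2>0$, this yields the pointwise bound $\tilde e(\baf_n+\nabla\phi_n,\bai_n+\psi_n|\baf_n,\bai_n)\leq e(\baf_n+\nabla\phi_n,\bai_n+\psi_n|\baf_n,\bai_n)$, whence, by the second normalisation,
\begin{equation*}
\limsup_n a_n^{-2}\!\int_Q\tilde e(\baf_n+\nabla\phi_n,\bai_n+\psi_n|\baf_n,\bai_n)\leq\limsup_n a_n^{-2}\!\int_Q e(\baf_n+\nabla\phi_n,\bai_n+\psi_n|\baf_n,\bai_n)\leq 0.
\end{equation*}

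\emph{Step 3 (conclusion).} Finally I apply Proposition~\ref{prop: lgrd} to $(\baf_n,\bai_n)$, $(\phi_n)$, $(\psi_n)$ with the scalars $(a_n)$ --- whose hypotheses were checked in Step~1 --- to obtain
\begin{equation*}
\frac{c_0}{4}=\liminf_n\frac{c_0}{4}\,a_n^{-2}\!\int_Q|V_p(\nabla\phi_n)|^2+|V_q(\psi_n)|^2\leq\liminf_n a_n^{-2}\!\int_Q\tilde e(\baf_n+\nabla\phi_n,\bai_n+\psi_n|\baf_n,\bai_n)\leq 0,
\end{equation*}
contradicting $c_0>0$. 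This proves the theorem, with $\tilde C_0,\tilde C_1$ the constants inherited from Proposition~\ref{prop: lgrd}, hence depending only on $e$ and $K$. I expect the substantive analytic work --- the covering of $Q$ by small cubes on which $\tilde e(\cdot|\cdot)$ behaves convexly (Lemma~\ref{zhang}) and the blow-up together with the weak-$*$ compactness argument for the normalised densities --- to have been the genuine obstacle, but it has already been handled in Proposition~\ref{prop: lgrd}. Within the proof of Theorem~\ref{thm:main} itself the only delicate points are that $a_n$ may tend to $0$ or to $\infty$ (harmless, since everything enters only through ratios normalised by $a_n^2$) and the comparison $\tilde e(\cdot|\cdot)\leq e(\cdot|\cdot)$, which is exactly what lets the smallness of the $e$-term pass to the $\tilde e$-term.
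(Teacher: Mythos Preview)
Your overall strategy --- normalise by $a_n^2=\int_Q|V_p(\nabla\phi_n)|^2+|V_q(\psi_n)|^2$, pass from $e$ to $\tilde e$ via the pointwise inequality $\tilde e(\cdot|\cdot)\le e(\cdot|\cdot)$ (which is precisely the convexity of $|V_p(\cdot)|^2$ and $|V_q(\cdot)|^2$), and apply Proposition~\ref{prop: lgrd} once --- is sound and considerably shorter than the paper's argument. The paper instead reduces to the specific inequality $\int_Q\tilde e(\cdot|\cdot)+\tfrac{C_{pen}}{2}|\phi|^2\ge0$ with the fixed penalty constant of Proposition~\ref{prop:Daf}, and then runs a five-step blow-up: the normalised sequences $s_k=\phi_k/\alpha_k$, $c_k=\psi_k/\alpha_k$ are split by decomposition lemmas into oscillating and concentrating parts, the concentration is handled by Proposition~\ref{prop: lgrd}, the oscillation is analysed through the Young measure generated by $(\nabla s_k,c_k)$ and reduced to the Hessian via Jensen, Proposition~\ref{prop:Daf} forces the weak limits $s,c$ to vanish, and Proposition~\ref{prop: lgrd} is applied once more to close. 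Your route shows that, with Proposition~\ref{prop: lgrd} already in hand, neither the decomposition lemmas, nor the Young-measure step, nor Proposition~\ref{prop:Daf} and Lemma~\ref{hsnfx} are strictly needed for Theorem~\ref{thm:main}.

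There is, however, a small gap in Step~1. From $a_n^2>n\int_Q e(\cdot|\cdot)+n\int_Q|V_p(\phi_n)|^2$ you assert both $a_n^{-2}\int_Q e(\cdot|\cdot)<1/n$ and $a_n^{-2}\int_Q|V_p(\phi_n)|^2<1/n$. The first follows since $\int_Q|V_p(\phi_n)|^2\ge0$, but the second does not: $\int_Q e(\cdot|\cdot)$ may be negative, and Lemma~\ref{growths}(a) only gives $\int_Q e(\cdot|\cdot)\ge -Ca_n^2$, so you obtain merely $a_n^{-2}\int_Q|V_p(\phi_n)|^2<1/n+C$, which is bounded but need not tend to zero --- and this is exactly the hypothesis of Proposition~\ref{prop: lgrd} you must verify. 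The fix is immediate: in the negation take $\tilde C_0=n$ but $\tilde C_1=n^2$. Then $a_n^2>n\int_Q e(\cdot|\cdot)+n^2\int_Q|V_p(\phi_n)|^2\ge -nCa_n^2+n^2\int_Q|V_p(\phi_n)|^2$ gives $a_n^{-2}\int_Q|V_p(\phi_n)|^2<(1+nC)/n^2\to0$, while $a_n^{-2}\int_Q e(\cdot|\cdot)<1/n$ still holds. With this adjustment your argument is complete.
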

	\begin{proof}
		It is enough to show the existence of some $\epsilon_0>0$ such that for all $(\baf,\bai)\in \mathcal{U}_K$, $\psi\in L^q(Q)$ with zero average and $\phi\in W^{1,p}_0(Q)$ with $\|\phi\|_{L^p(Q)}<\epsilon_0$ it holds that
		\begin{align}\label{eq:cinq}
			\int_Q \tilde e(\baf+\nabla\phi,\bai+\psi|\baf,\bai)+\frac{C_{pen}}{2}|\phi|^2\geq 0.
		\end{align}
		Indeed, due to the convexity of the functions $|V_p(\cdot)|^2$ and $|V_q(\cdot)|^2$, we see that \eqref{eq:cinq} implies the required inequality. 
		\par 
		We proceed by contradiction. Suppose that \eqref{eq:cinq} fails. Then, there exist $(\baf_k,\bai_k)_k\subseteq \mathcal{U}_K$, $(\psi_k)_k\subseteq L^q(Q)$ zero-average and $(\phi_k)_k\subseteq W^{1,p}_0(Q)$ with $\|\phi_k\|_{L^p(Q)}\to 0$ and  $(\baf_k,\bai_k) \to (\baf,\bai)$ in $C^0(Q)$ such that 
		\begin{equation}\label{eq:cntr}
			\int_Q \tilde e(\baf_k+\nabla\phi_k,\bai_k+\psi_k|\baf_k,\bai_k)+\frac{C_{pen}}{2}|\phi_k|^2< 0.
		\end{equation}
		Let $\beta_k^\eta:=\Big(\|\nabla\phi_k\|^q_{L^q(Q)}+\|\psi_k\|^q_{L^q(Q)}\Big)^{1/q},\,\,\,\beta_k^F:=\Big(\|\nabla\phi_k\|^p_{L^p(Q)}+\|\psi_k\|^q_{L^q(Q)}\Big)^{1/p}$\\
		and $\al_k:=\Big(\|\nabla\phi_k\|^2_{L^2(Q)}+\|\psi_k\|^2_{L^2(Q)}\Big)^{1/2}$.\\
		\vsp
		\par
		\noindent\underline{Step 1}: We show that $\|\nabla\phi_k\|_{L^p(Q)}\to 0,\, \|\psi_k\|_{L^q(Q)}\to 0$ as $k\to\infty$ and
		\begin{equation}\label{eq:multi}
			\Lambda^F:=\sup_k\frac{({\beta_k^F})^p}{\al_k^2}<\infty,\hspace{1cm}
			\Lambda^\eta:=\sup_k\frac{({\beta_k^\eta})^q}{\al_k^2}<\infty.
		\end{equation}
		Indeed, by Lemma \ref{growths} (c) and Young's inequality we infer that
		\begin{align*}
			&\tilde e(\baf_k+\nabla\phi_k,\bai_k+\psi_k|\baf_k,\bai_k)=\tilde e(\baf_k+\nabla\phi_k,\bai_k+\psi_k)-\tilde e(\baf_k,\bai_k)\\
			&-\tilde e_F(\baf_k,\bai_k)\nabla\phi_k-\tilde e_\eta(\baf_k,\bai_k)\psi_k\geq -C(\delta)+(\frac{c}{2^p}-\delta)|\nabla\phi_k|^p+(\frac{c}{2^q}-\delta)|\psi_k|^q,
		\end{align*}
		and for $\delta>0$ small enough we conclude that 
		\begin{align*}
			0\overset{\eqref{eq:cntr}}>\int_Q \tilde e(\baf_k+\nabla\phi_k,\bai_k+\psi_k|\baf_k,\bai_k)\geq-C+c\int_Q|\nabla\phi_k|^p+|\psi_k|^q.
		\end{align*}
		The above inequality tell us that the sequences $(\nabla\phi_k)_k$ and $(\psi_k)_k$ are bounded in $L^p(Q)$ and $L^q(Q)$ respectively. We now apply Proposition \ref{prop: lgrd} for the sequences $(\baf_k,\bai_k)_k,\,(\nabla\phi_k)_k,\,(\psi_k)_k$ and $a_k=1$ to find that
		\begin{align*}
			\liminf_k\frac{c_0}{4}\int_Q|V_p(\nabla\phi_k)|^2+|V_q(\psi_k)|^2
			\leq \liminf_k\int_Q\tilde e(\baf_k+\nabla\phi_k,\bai_k+\psi_k|\baf_k,\bai_k)
			<0,
		\end{align*}
		and so, up to a subsequence, $\|\nabla\phi_k\|_{L^p(Q)}\to 0$ and $ \|\psi_k\|_{L^q(Q)}\to 0$. 
		\par 
		For the rest of Step 1 we recall Lemma \ref{growths} (c) which tell us that
		\begin{align*}
			0\overset{\eqref{eq:cntr}}\geq & d_1\int_Q\Big(|\nabla\phi_k|^p+|\psi_k|^q\Big)
			-d_2\int_Q\Big(|\nabla\phi_k|^2+|\psi_k|^2\Big),\\
			0\overset{\eqref{eq:cntr}}\geq & d_3\int_Q\Big(|\nabla\phi_k|^q+|\psi_k|^q\Big)
			-d_4\int_Q\Big(|\nabla\phi_k|^2+|\psi_k|^2\Big),
		\end{align*}
		and so by dividing both inequalities by $\al_k^2$ we conclude the proof of this step.
		\vsp
		\par
		\noindent\underline{Step 2}: Following the strategy of \cite{JC2017}, \cite{JCK2020}, \cite{GB2009} we decompose the normalised sequences
		\begin{align*}
			s_k:=\frac{\phi_k}{\al_k}\,\,\,\,\,\,\text{and}\,\,\,\,\,\,c_k:=\frac{\psi_k}{\al_k}.
		\end{align*}
		Since $\|\nabla s_k\|_{L^2(Q)}^2+\|c_k\|_{L^2(Q)}^2=1$ we find $s\in W^{1,2}_0(Q)$ and $c\in L^2(Q)$ with $\int_Q c=0$ such that $s_k\rightharpoonup s$ in $W^{1,2}(Q)$ and $c_k\rightharpoonup c$ in $L^2(Q)$. Setting 
		\[
		M_k^F:=2^{-\frac{p-2}{2p}}\frac{\al_k}{\beta_k^F}\,\,\,\,\,\,\text{and}\,\,\,\,\,\,M_k^\eta:=2^{-\frac{q-2}{2q}}\frac{\al_k}{\beta_k^\eta},
		\]
		we also infer that $\|M_k^F\nabla s_k\|_{L^p(Q)},\,\|M_k^\eta c_k\|_{L^q(Q)}\leq 1$ and $M_k^F,\,M_k^\eta\in (0,1]$. The first two bounds come directly from the definition of the sequences $s_k$ and $c_k$, while for the rest we have that (up to a further subsequence)
		\begin{align*}
			\frac{({M_k^F})^p}{2^{-\frac{p-2}{2}}}=\frac{\al_k^p}{({\beta_k^F})^p}=\frac{\Big(\|\nabla\phi_k\|^2_{L^2}+\|\psi_k\|^2_{L^2}\Big)^{p/2}}{\|\nabla\phi_k\|^p_{L^p}+\|\psi_k\|^q_{L^q}}\leq 2^{(p-2)/2}\frac{\|\nabla\phi_k\|^p_{L^2}+\|\psi_k\|^p_{L^2}}{\|\nabla\phi_k\|^p_{L^p}+\|\psi_k\|^q_{L^q}}\leq 2^{\frac{p-2}{2}} ,
		\end{align*}
		for $k$ large enough. The last inequality comes from the fact that since $p\geq q$ and $\|\psi_k\|_{L^q}\to 0$  we may find a subsequence (denoted again by $\psi_k$) such that $\|\psi_k\|_{L^2}^p\leq \|\psi_k\|_{L^2}^q$. Similarly, we see that 
		\[
		\frac{({M_k^\eta})^q}{2^{-\frac{q-2}{2}}}=\frac{\al_k^q}{({\beta_k^\eta})^q}=\frac{\Big(\|\nabla\phi_k\|^2_{L^2}+\|\psi_k\|^2_{L^2}\Big)^{q/2}}{\|\nabla\phi_k\|^q_{L^q}+\|\psi_k\|^q_{L^q}}\leq 2^{(q-2)/2}\frac{\|\nabla\phi_k\|^q_{L^2}+\|\psi_k\|^q_{L^2}}{\|\nabla\phi_k\|^q_{L^q}+\|\psi_k\|^q_{L^q}}\leq 2^{\frac{q-2}{2}}.
		\]
		According to the Decomposition Lemmas [\cite{JC2017}, Lemma 3.4], [\cite{FLb2007}, Lemma 8.13] there exist functions $g_k\in C_c^\infty(Q),\,b_k\in W^{1,2}(Q),\,G_k\in L^2(Q)$ and $B_k\in L^2(Q)$ such that (up to common subsequences)
		\begin{enumerate}
			\item[(a)] $s_k = s + g_k + b_k$;
			\item[(b)] $g_k,\,b_k\rightharpoonup 0$ in $W^{1,2}(Q)$ and $M_k^Fg_k,\,M^F_kb_k\rightharpoonup 0$ in $W^{1,p}(Q)$;
			\item[(c)] $\big(|\nabla g_k|^2\big)_k$ and $\big(|M_k^F\nabla g_k|^p\big)_k$ are equiintegrable;
			\item[(d)] $\nabla b_k\overset{m}\rightarrow 0$ and $M_k^F\nabla b_k\overset{m}\rightarrow 0$,
		\end{enumerate}
		and
		\begin{enumerate}
			\item[(a')] $c_k = c + G_k + B_k$;
			\item[(b')] $G_k,\,B_k\rightharpoonup 0$ in $L^2(Q)$ and $M_k^\eta G_k,\,M^\eta_k B_k\rightharpoonup 0$ in $L^q(Q)$;
			\item[(c')] $\big(|G_k|^2\big)_k$ and $\big(|M_k^F G_k|^q\big)_k$ are equiintegrable;
			\item[(d')] $B_k\overset{m}\rightarrow 0$ and $M_k^\eta B_k\overset{m}\rightarrow 0$.
		\end{enumerate}
		We define
		\begin{align*}
			f_k(x):=\al_k^{-2}\Big(\tilde e(\baf_k+\al_k\nabla s_k,\bai_k+\al_k c_k|\baf_k,\bai_k)
			-\tilde e(\baf_k+\al_k\nabla b_k,\bai_k+\al_k B_k|\baf_k,\bai_k)\Big)
		\end{align*}
		and then we deduce that
		\begin{align*}
			\int_Q f_k(x)+\al_k^{-2}\tilde e(\baf_k+\al_k\nabla b_k,\bai_k+\al_k B_k|\baf_k,\bai_k)+\frac{C_{pen}}{2}|s_k|^2\\
			=\al_k^{-2}\int_Q \tilde e(\baf_k+\al_k\nabla s_k,\bai_k+\al_k c_k|\baf_k,\bai_k)+\frac{C_{pen}}{2}|\al_k s_k|^2\\
			=\al_k^{-2}\int_Q \tilde e(\baf_k+\nabla \phi_k,\bai_k+ \psi_k|\baf_k,\bai_k)+\frac{C_{pen}}{2}|\phi_k|^2\overset{\eqref{eq:cinq}}<0.
		\end{align*}
		This shows that
		\begin{equation}\label{eq:step2}
			\int_Q f_k(x)+\al_k^{-2}\tilde e(\baf_k+\al_k\nabla b_k,\bai_k+\al_k B_k|\baf_k,\bai_k)+\frac{C_{pen}}{2}|s_k|^2<0.
		\end{equation}
		\par
		\noindent\underline{Step 3}: In this step we show that the contribution of the concentrating part must be nonnegative in the limit due to quasiconvexity. In particular, we prove that
		\[
		\liminf_k \al_k^{-2}\int_Q\tilde e(\baf_k+\al_k\nabla b_k,\bai_k+\al_k B_k|\baf_k,\bai_k)\geq 0.
		\]
		For this aim we apply Proposition \ref{prop: lgrd} for the sequences $(\baf_k,\bai_k)_k,\,(\al_kb_k)_k,\,(\al_kB_k)_k$ and $(\al_k)_k$ since
		\begin{align*}
			\al_k^{-2}|V_p(\al_kb_k)|^2=|b_k|^2+\al_k^{p-2}|b_k|^p&\lesssim|b_k|^2+\Lambda^F|M_k^Fb_k|^p\to 0\,\,\,\text{in}\,\,\,L^1(Q),\\
			\sup_k\int_Q\al_k^{-2}|V_p(\al_k\nabla b_k)|^2&\lesssim\sup_k\int_Q|b_k|^2+\Lambda^F\sup_k|M_k^Fb_k|^p<\infty,\\
			\sup_k\int_Q\al_k^{-2}|V_q(\al_k B_k)|^2&\lesssim\sup_k\int_Q|B_k|^2+\Lambda^\eta\sup_k|M_k^\eta B_k|^q<\infty,
		\end{align*}
		to infer that
		\begin{align*}
			0\leq\liminf_k\frac{c_0}{4}\al_k^{-2}\int_Q|V_p(\al_k&\nabla b_k)|^2+|V_q(\al_k B_k)|^2\\
			&\leq \liminf_k \al_k^{-2}\int_Q\tilde e(\baf_k+\al_k\nabla b_k,\bai_k+\al_k B_k|\baf_k,\bai_k).
		\end{align*}
		Combining this with \eqref{eq:step2} we have that
		\begin{equation}\label{eq:step3}
			\liminf_k\int_Q f_k(x)+\frac{C_{pen}}{2}|s_k|^2<0.
		\end{equation}
		\par
		\noindent\underline{Step 4}: Next, consider the $(\rm{curl,0})$-$2$-Young measure generated by the sequence $(\nabla s_k,c_k)$, say $\nu=(\nu_x)_{x\in Q}$, and recall that $(\baf_k,\bai_k)\to(\baf,\bai)$ in $C^0(Q)$. In this step we show that
		\begin{align*}
			\frac{1}{2}\int_Q\big\langle\nu_x,\tilde L(\baf(x),\bai(x))\,[\Lambda,\Lambda]\,\big\rangle dx\leq \liminf_k\int_Q f_k(x),
		\end{align*}
		where for simplicity we use the notation $\Lambda:=(\lambda_F,\lambda_\eta)$.
		\par 
		We first prove the equiintegrability of $f_k$. By Lemma \ref{growths} (a) we find that 
		\begin{align*}
			&|f_k|=\frac{|\tilde e(\baf_k+\al_k\nabla s_k,\bai_k+\al_k c_k|\baf_k,\bai_k)
				-\tilde e(\baf_k+\al_k\nabla b_k,\bai_k+\al_k B_k|\baf_k,\bai_k)|}{\al_k^2}
		\end{align*}
		\begin{align*}
			\leq\al_k^{-2}\big(|\al_k\nabla s_k|+|\al_k& c_k|+|\al_k\nabla b_k|+|\al_k B_k|\\
			&+|\al_k\nabla s_k|^{p-1}+|\al_k\nabla b_k|^{p-1}+|\al_k B_k|^{q\frac{p-1}{p}}\big)\al_k|\nabla s+\nabla g_k|
		\end{align*}
		\begin{align*}
			+\al_k^{-2}\big(|\al_k\nabla s_k|+&|\al_k c_k|+|\al_k\nabla b_k|+|\al_k B_k|\\
			&+|\al_kc_k|^{q-1}+|\al_kB_k|^{q-1}+|\al_k \nabla b_k|^{p\frac{q-1}{q}}\big)\al_k|c+G_k|
		\end{align*}
		\begin{align*}
			=&\big(|\nabla s_k|+| c_k|+|\nabla b_k|+| B_k|+\al_k^{p-2}|\nabla s_k|^{p-1}+\al_k^{p-2}|\nabla b_k|^{p-1}\big)|\nabla s+\nabla g_k| \\
			&+\big(|\nabla s_k|+| c_k|+|\nabla b_k|+| B_k|+\al_k^{q-2}|c_k|^{q-1}+\al_k^{q-2}|B_k|^{q-1}\big)|c+G_k|\\
			&+\al_k^{-2}|\al_k B_k|^{q\frac{p-1}{p}}\al_k|\nabla s+\nabla g_k|+\al_k^{-2}|\al_k \nabla b_k|^{p\frac{q-1}{q}}\al_k|c+G_k|=:I_{1} + I_2+I_3.
		\end{align*}
		Regarding $I_{k}$, $k=1,2$, for a given set $A\subset Q$, we apply Young's inequality and we integrate both sides to infer that
		\begin{align*}
			\int_AI_{k}\leq C\delta&+C(\delta)\int_A|\nabla s+\nabla g_k|^2+\al^{p-2}|\nabla s+\nabla g_k|^p\\
			&+C(\delta)\int_A|c+G_k|^2+\al^{q-2}|c+G_k|^q.
		\end{align*}
		For $I_3$ again by Young's inequality we see that
		\begin{align*}
			I_3&=\al_k^{-2}\big(\,|\al_k B_k|^{q\frac{p-1}{p}}\al_k|\nabla s+\nabla g_k|+|\al_k \nabla b_k|^{p\frac{q-1}{q}}\al_k|c+G_k|\,\big)\\
			&\leq \al_k^{-2}\big(\,\delta\al_k^p|\nabla b_k|^p+C(\delta)\al_k^q|c+G_k|^q+\delta\al_k^q|B_k|^q+C(\delta)\al_k^p|\nabla s +\nabla g_k|^p\big)\\
			&=\delta\al_k^{p-2}|\nabla b_k|^p+C(\delta)\al_k^{q-2}|c+G_k|^q+\delta\al_k^{q-2}|B_k|^q+C(\delta)\al_k^{p-2}|\nabla s +\nabla g_k|^p.
		\end{align*}
		Combing the above we conclude that
		\begin{align*}
			\int_Af_k\leq C\delta&+C(\delta)\int_A|\nabla s+\nabla g_k|^2+\al^{p-2}|\nabla s+\nabla g_k|^p\\
			&+C(\delta)\int_A|c+G_k|^2+\al^{q-2}|c+G_k|^q.
		\end{align*}
		All sequences appearing on the right hand side are equiintegrable so, choosing $\delta>0$ appropriately, we see that $f_k$ is equiintegrable. Since $\nabla s_k,\,c_k,\,\nabla b_k,\,B_k\rightharpoonup 0$ in $L^2(Q)$, for $\eps>0$ we can find $m_\eps>0$ such that 
		\begin{align}\label{eq:eqfb}
			\int_Qf_k=\int_{A_k^c\cup B_k^c}f_k+\int_{A_k\cap B_k}f_k>-\eps+\int_{A_k\cap B_k}f_k,
		\end{align}
		for all $m\geq m_\eps$, where
		\begin{align*}
			&A_k=\big\{x\in Q:\,(|\nabla s_k|^2+|c_k|^2)^{1/2}<m\big\},\\
			&B_k=\big\{x\in Q:\,(|\nabla b_k|^2+|B_k|^2)^{1/2}<m\big\}.
		\end{align*}
		Now by choosing $m_\eps$ larger if necessary, we assume that
		\begin{align}
			\Big|\int_Q\big\langle\nu_x,\tilde L(\baf,\bai)\,[\,\Lambda,\Lambda]\mathbbm{1}_{\mathbb{R}^{d\times d}\times\mathbb{R}\setminus B(0,m)}(\Lambda)\big\rangle\Big|<\eps,  \,\,\,\,\,\text{for all}\,\,\,m\geq m_\eps.
		\end{align}
		Indeed, Young's inequality and dominated convergence give us that
		\begin{align*}
			\Big|\int_Q\big\langle\nu_x,\tilde L(\baf,\bai)\,[\,\Lambda,\Lambda &]\mathbbm{1}_{\mathbb{R}^{d\times d}\times\mathbb{R}\setminus B(0,m)}(\Lambda)\big\rangle\Big|\leq C\int_Q\big|\langle\nu_x,|\Lambda|^2\mathbbm{1}_{\mathbb{R}^{d\times d}\times\mathbb{R}\setminus B(0,m)}(\Lambda)\,\rangle\big|\\
			&=C\int_Q\big|\langle\nu_x,|\Lambda|^2\,\rangle-\langle\nu_x,|\Lambda|^2\mathbbm{1}_{B(0,m)}(\Lambda)\,\rangle\big|\longrightarrow 0
		\end{align*}
		and so,
		\begin{align*}
			&\int_Q\big\langle\nu_x,\tilde L(\baf,\bai)\,\big[\Lambda,\Lambda\big]\big\rangle=\int_Q\big\langle\nu_x,\tilde L(\baf,\bai)\,\big[\Lambda,\Lambda\big]\mathbbm{1}_{\mathbb{R}^{d\times d}\times\mathbb{R}\setminus B(0,m)}(\Lambda)\big\rangle
		\end{align*}
		\begin{align}\label{eq:htb}
			&+\int_Q\big\langle\nu_x,\tilde L(\baf,\bai)\,\big[\Lambda,\Lambda\big]\mathbbm{1}_{B(0,m)}(\Lambda)\big\rangle\leq \int_Q\big\langle\nu_x,\tilde L(\baf,\bai)\,\big[\Lambda,\Lambda\big]\mathbbm{1}_{B(0,m)}(\Lambda)\big\rangle+\eps,
		\end{align}
		for all $m\geq m_\eps$. However $\mathbbm{1}_{B(0,m)}$ is lower semicontinuous and hence, for all $x\in Q$ the function
		\[
		\Lambda\mapsto \tilde L(\baf,\bai)[\,\Lambda,\Lambda\,]\mathbbm{1}_{B(0,m)}(\Lambda)
		\]
		is also lower semicontinuous. Then, since $(\nabla s_k,c_k)$ generates $(\nu_x)_x$ we infer that
		\begin{align*}
			\int_Q\big\langle\nu_x,\tilde L(\baf,\bai)\,\big[\Lambda,\Lambda\big]\mathbbm{1}_{B(0,m)}(\Lambda)\big\rangle&\leq \liminf_k\int_{A_k}\tilde L(\baf,\bai)[\,(\nabla s_k,c_k),(\nabla s_k,c_k)\,]\\
			&=\liminf_k\int_{A_k}\tilde L(\baf_k,\bai_k)[\,(\nabla s_k,c_k),(\nabla s_k,c_k)\,]
		\end{align*}
		where the last equality follows by the strong convergence $(\baf_k,\bai_k)\to(\baf,\bai)$ in $C^0(Q)$. 
		Going back to \eqref{eq:htb} we conclude that
		\begin{align}\label{eq:htb2}
			\int_Q\big\langle\nu_x,\tilde L(\baf,\bai)\,\big[\Lambda,\Lambda\big]\big\rangle\leq \liminf_k\int_{A_k}\tilde L(\baf_k,\bai_k)[\,(\nabla s_k,c_k),(\nabla s_k,c_k)\,]+\eps,
		\end{align}
		for all $m\geq m_\eps$. We now claim that
		\begin{equation}\label{eq:claimstep4}
			\frac{1}{2}\liminf_k\int_{A_k}\tilde L(\baf_k,\bai_k)[\,(\nabla s_k,c_k),(\nabla s_k,c_k)\,]=\liminf_k\int_{A_k\cap B_k}f_k,
		\end{equation}
		for all $m\geq m_\eps$. Indeed, we observe that
		\begin{align*}
			f_k=\int_0^1(1-t&)\Big(\,\tilde L(\baf_k+t\al_k\nabla s_k,\bai_k+t\al_k c_k)[\,(\nabla s_k,c_k),(\nabla s_k,c_k)\,]\\
			&-\tilde L(\baf_k+t\al_k\nabla b_k,\bai_k+t\al_k B_k)[\,(\nabla b_k,B_k),(\nabla b_k,B_k)\,]\,\Big)dt.
		\end{align*}
		Then, since $\int_0^1(1-t)dt=1/2$, we infer that
		\begin{align*}
			\mathbbm{1}_{A_k\cap B_k} &f_k = \mathbbm{1}_{A_k\cap B_k}\int_0^1(1-t) \Big(\,\tilde L(\baf_k+t\al_k\nabla s_k,\bai_k+t\al_k c_k)[\,(\nabla s_k,c_k),(\nabla s_k,c_k)\,]\\
			&\hspace{5.3cm}-\tilde L(\baf_k,\bai_k)[\,(\nabla s_k,c_k),(\nabla s_k,c_k)\,]\,\Big)dt\\
			& + \mathbbm{1}_{A_k} \frac12 \tilde L(\baf_k,\bai_k)[\,(\nabla s_k,c_k),(\nabla s_k,c_k)\,]\\
			&- \mathbbm{1}_{A_k} \frac12 \tilde L(\baf_k,\bai_k)[\,(\nabla s_k,c_k),(\nabla s_k,c_k)\,] \left(1 - \mathbbm{1}_{B_k}\right)\\
			& - \mathbbm{1}_{A_k\cap B_k} \int_0^1(1-t)\tilde L(\baf_k+t\al_k\nabla b_k,\bai_k+t\al_k B_k)[\,(\nabla b_k,B_k),(\nabla b_k,B_k)\,] \,dt\\
			& =: I_1^k + I_2^k + I_3^k + I_4^k,
		\end{align*}
		and so it is enough to prove that 
		\[
		\lim_{k\to\infty} \int_Q I_1^k = \lim_{k\to\infty} \int_Q I_3^k = \lim_{k\to\infty} \int_Q I_4^k = 0.
		\]
		Recall that $\alpha_k\to 0$ and $(\baf_k,\bai_k) \to (\baf,\bai)$ in $C^0(Q)$. Thus, for $I_1^k$ and since we are in the set $A_k$, we find that
		\begin{align*}
			&\Big|\tilde L(\baf_k+t\al_k\nabla s_k,\bai_k+t\al_k c_k)[\,(\nabla s_k,c_k),(\nabla s_k,c_k)\,]\\
			&-\tilde L(\baf_k,\bai_k)[\,(\nabla s_k,c_k),(\nabla s_k,c_k)\,]\Big|\leq C(K)\al_k^3m^3\to 0,\,\,\,k\to \infty,
		\end{align*}
		and thus, $\int_Q I_1^k \to 0$ by dominated convergence. As for $I_3^k$, again since $\tilde L$ is continuous and $\|(\baf,\bai)\|_{L^\infty(Q)}\leq K$, we get that
		\begin{align*}
			|I_3^k| \leq C(W,K) m^2 \left(1 - \mathbbm{1}_{B_k}\right) = C(W,K) m^2  \mathbbm{1}_{B_k^c}.
		\end{align*}
		Hence, $\int_Q I_3^k \to 0$ as $\nabla b_k \to 0$ and $B_k\to 0$ in measure. We note here that
		\begin{align*}
			\{x\in Q: |\nabla b_k(x)|^2&+|B_k(x)|^2\geq \eps^2\}\\
			&\subseteq\{x\in Q: |\nabla b_k(x)|^2\geq \eps^2/2\}\cup\{x\in Q: |B_k(x)|^2\geq \eps^2/2\}.
		\end{align*}
		Lastly, for $I_4^k$, as we are in $B_k$ and $t\in(0,1)$, we get that
		\[
		(\baf_k,\bai_k) + t\alpha_k (\nabla b_k,\,B_k) \rightarrow (\baf,\bai),\,\,\,k\to\infty,
		\]
		uniformly and thus
		\begin{align*}
			|I_4^k| \leq C(W,K)(|\nabla b_k|^2+|B_k|^2)\leq  C(W,K)m(|\nabla b_k|+|B_k|) \to 0
		\end{align*}
		in measure. In particular, restricting to $B_k$, $\int_Q I_4^k \to 0$ by dominated convergence. Finally combining \eqref{eq:htb2}, \eqref{eq:claimstep4} and \eqref{eq:eqfb} we infer that
		\begin{align*}
			\frac{1}{2} \int_Q\big\langle\nu_x,\tilde L(\baf,\bai)\,\big[\Lambda,\Lambda\big]\big\rangle&\leq \liminf_k\int_{A_k\cap B_k}f_k+\eps/2\\
			&< \liminf_k\int_Qf_k+3\eps/2,
		\end{align*}
		for all $m\geq m_\eps$. Since $\eps$ is arbitrary and the dependence on $m_\eps$ has been removed, we take the limit $\eps\to 0$ to deduce that
		\begin{equation*}
			\frac{1}{2} \int_Q\big\langle\nu_x,\tilde L(\baf,\bai)\,\big[\Lambda,\Lambda\big]\big\rangle\leq \liminf_k\int_Qf_k.
		\end{equation*}
		Combining the above inequality with \eqref{eq:step3} we conclude that
		\begin{equation}\label{eq:step4}
			\frac{1}{2} \int_Q\big\langle\nu_x,\tilde L(\baf,\bai)\,\big[\Lambda,\Lambda\big]\big\rangle +C_{pen}|s_k|^2<0.
		\end{equation}
		\vsp
		\par
		\noindent\underline{Step 5}: In this step we show how \eqref{eq:step4} leads to a contradiction. By Lemma \ref{hsnfx} the function
		\[
		h(x,\xi_1,\xi_2):=\tilde L(\baf(x),\bai(x))[\,(\xi_1,\xi_2),(\xi_1,\xi_2)\,]
		\]
		is strongly quasiconvex for each $x\in Q$. We note here that our definition of quasiconvexity is equivalent with the definition of $\mathcal{A}$-quasiconvexity for $\mathcal{A}=(\rm{curl},0)$, see Remark \ref{remark:Aqc}. Hence, since $(\nabla s_k,c_k)$ generates the $(\rm{curl},0)$-2-Young measure $(\nu_x)_x$ and $h(x,\xi_1,\xi_2)$ grows quadratically in $(\xi_1,\xi_2)$, Jensen's inequality for $\mathcal{A}$-quasiconvex functions says that for a.e. $x\in Q$
		\begin{align*}
			\tilde L(\baf,\bai)[\,(\nabla s,c),(\nabla s,c)\,]\leq\big\langle\nu_x,\tilde L(\baf,\bai)\,\big[(\lambda_F,\lambda_\eta),(\lambda_F,\lambda_\eta)\big]\big\rangle.
		\end{align*}
		Adding $C_{pen}|s|^2$ on both sides and integrating over $Q$, we infer that 
		\begin{align*}
			\int_QC_{pen}|s|^2+&\tilde L(\baf,\bai)[\,(\nabla s,c),(\nabla s,c)\,]\\
			&\leq \int_QC_{pen}|s|^2+\big\langle\nu_x,\tilde L(\baf,\bai)\,\big[(\lambda_F,\lambda_\eta),(\lambda_F,\lambda_\eta)\big]\big\rangle\overset{\eqref{eq:step4}}\leq 0.
		\end{align*}
		However, by Proposition \ref{prop:Daf}, since $\nabla s\in W^{1,p}(Q)$ and $c\in L^q(Q)$, we know that 
		\begin{align*}
			\int_{Q}\tilde L(\bar{F}(x),\bai(x))\big[(\nabla s,c),(\nabla s,c)\big]+C_{pen}|s|^2\geq  c\int_{Q}|\nabla s|^2+|c|^2.
		\end{align*}
		In particular, combining with the above we see that $\nabla s=0$ and $c=0$, and by the Poincar\'e inequality $s=0$. We may thus apply Proposition \ref{prop: lgrd} for the sequences $(\baf_k,\bai_k)_k,\,(\al_ks_k)_k,\,(\al_kc_k)_k$ and $(\al_k)_k$ since
		\begin{align*}
			\al_k^{-2}|V_p(\al_ks_k)|^2=|s_k|^2+\al_k^{p-2}|s_k|^p&\lesssim|s_k|^2+\Lambda^F|M_k^Fs_k|^p\to 0\,\,\,\text{in}\,\,\,L^1(Q),\\
			\sup_k\int_Q\al_k^{-2}|V_p(\al_k\nabla s_k)|^2&\lesssim\sup_k\int_Q|s_k|^2+\Lambda^F\sup_k|M_k^Fs_k|^p<\infty,\\
			\sup_k\int_Q\al_k^{-2}|V_q(\al_k c_k)|^2&\lesssim\sup_k\int_Q|c_k|^2+\Lambda^\eta\sup_k|M_k^\eta c_k|^q<\infty,
		\end{align*}
		to infer that
		\begin{align*}
			0 < \frac{c_0}4 & = \liminf_{k\to\infty} \frac{\tilde c}4 \int_Q |\nabla s_k|^2+|c_k|^2 \\
			& \leq \liminf_{k\to\infty} \frac{\tilde c}{4} \int_Q |\nabla s_k|^2+|c_k|^2 + \alpha_k^{p-2}|\nabla s_k|^p+\alpha_k^{q-2}|c_k|^q\\
			& \leq \liminf_{k\to\infty}\alpha_k^{-2}\int_Q\tilde e(\baf_k+\al_k\nabla s_k,\bai_k+\al_k c_k|\baf_k,\bai_k)\\
			& = \liminf_{k\to\infty}\alpha_k^{-2}\int_Q\tilde e(\baf_k+\nabla \phi_k,\bai_k+\psi_k|\baf_k,\bai_k) + \frac{C_{pen}}{2}\int_Q|s_k|^2\overset{\eqref{eq:cinq}}\leq 0.
		\end{align*}
		But $c_0>0$, concluding the proof.
	\end{proof}
	We are now ready to prove our main result, the G\aa rding inequality. Note here that it suffices to show that the G\aa rding inequality of Theorem \ref{thm:main} remains valid for test functions $\psi$ such that $\|\psi\|_{L^q(Q)}>\eps_0$. 
	\begin{Theorem}\label{theorem:3Garding}
		There exists constants $ C_0= C_0(e,K)>0$, $ C_1= C_1(e,K)>0$ such that for all $(\baf,\bai)\in \mathcal{U}_K$, $\psi\in L^q(Q)$ with $\int_Q\psi=0$ and $\phi\in W^{1,p}_0(Q)$  it holds that
		\begin{align}\label{eq:mmgarding}
			\int_Q&\Big(|V_p(\nabla\phi(x))|^2+|V_q(\psi(x))|^2\Big)dx \\
			&\leq C_0\int_Q e(\baf(x)+\nabla\phi(x),\bai(x)+\psi(x)|\baf(x),\bai(x))dx+  C_1\int_Q|V_p(\phi(x))|^2dx.
		\end{align}
	\end{Theorem}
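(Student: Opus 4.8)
The plan is to deduce the G\aa rding inequality \eqref{eq:mmgarding} from Theorem \ref{thm:main} by a case distinction according to the size of $\|\phi\|_{L^p(Q)}$. Theorem \ref{thm:main} is precisely \eqref{eq:mmgarding}, with $C_0=\tilde C_0$ and $C_1=\tilde C_1$, under the extra hypothesis $\|\phi\|_{L^p(Q)}<\epsilon_0$, so the only thing left to prove is \eqref{eq:mmgarding} in the complementary range $\|\phi\|_{L^p(Q)}\geq\epsilon_0$; there I would argue purely through coercivity, without using quasiconvexity at all.

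In that range I would first apply Lemma \ref{growths} (c) pointwise at $(\lambda_1,\lambda_2)=(\baf(x),\bai(x))\in\overline{B(0,K)}$ with $(\xi_1,\xi_2)=(\nabla\phi(x),\psi(x))$, and integrate over $Q$, to obtain
\begin{equation*}
\int_Q e(\baf+\nabla\phi,\bai+\psi\,|\,\baf,\bai)\geq d_1\int_Q\bigl(|\nabla\phi|^p+|\psi|^q\bigr)-d_2\int_Q\bigl(|\nabla\phi|^2+|\psi|^2\bigr).
\end{equation*}
Since $p,q>2$, for every $\delta>0$ Young's inequality gives $C(\delta)>0$ with $|z|^2\leq\delta|z|^p+C(\delta)$ and $|w|^2\leq\delta|w|^q+C(\delta)$; picking $\delta$ so that $d_2\delta\leq d_1/2$, absorbing the quadratic terms on the left, and using $|z|^2\leq|z|^p+1$ once more, I reach an estimate
\begin{equation*}
\int_Q\bigl(|V_p(\nabla\phi)|^2+|V_q(\psi)|^2\bigr)\leq C_2\int_Q e(\baf+\nabla\phi,\bai+\psi\,|\,\baf,\bai)+C_3,
\end{equation*}
with constants $C_2,C_3>0$ depending only on $e$, $K$ and $|Q|$, and in particular not on $\phi$ or $\psi$.

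To close the argument I would then use the standing hypothesis of this case: since $\int_Q|V_p(\phi)|^2\geq\int_Q|\phi|^p=\|\phi\|_{L^p(Q)}^p\geq\epsilon_0^p$, the constant $C_3$ satisfies $C_3\leq(C_3\epsilon_0^{-p})\int_Q|V_p(\phi)|^2$, and substituting this into the previous display yields \eqref{eq:mmgarding} in the regime $\|\phi\|_{L^p(Q)}\geq\epsilon_0$ with $C_0=C_2$ and $C_1=C_3\epsilon_0^{-p}$. Taking $C_0:=\max\{\tilde C_0,C_2\}$ and $C_1:=\max\{\tilde C_1,C_3\epsilon_0^{-p}\}$ then handles both cases simultaneously and proves the theorem.

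I do not expect a genuine obstacle in this last step: all the hard analysis sits in Theorem \ref{thm:main}, and what remains is essentially bookkeeping. The one point meriting extra attention is the borderline exponent $q=2$ (equivalently $\min\{p,q\}=2$, given the standing assumption $p\geq q$), where the quadratic correction in Lemma \ref{growths} (c) is of the same order as the leading term and is not absorbed by Young's inequality; there I would replace the coercivity estimate above by a variant that uses the $\lambda_2$-convexity of $e$ (which, as in Remark \ref{remark:Aqc}, supplies the bound $c_0|V_q(\psi)|^2$ outright) together with the G\aa rding inequality for the cone-convex Hessian form from Proposition \ref{prop:Daf} and Lemma \ref{hsnfx}, at the cost only of cosmetic modifications.
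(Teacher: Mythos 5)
Your strategy is essentially the paper's: split on the size of $\|\phi\|_{L^p(Q)}$, invoke Theorem~\ref{thm:main} when $\|\phi\|_{L^p(Q)}<\epsilon_0$, and use coercivity of the relative energy to dispatch the complementary range, absorbing additive constants through $C_3\leq C_3\epsilon_0^{-p}\int_Q|V_p(\phi)|^2$. That part is correct and matches the paper.

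The difference is in how the coercivity estimate is obtained on $\{\|\phi\|_{L^p(Q)}\geq\epsilon_0\}$, and your choice creates an avoidable problem. You pass through Lemma~\ref{growths}~(c), which produces a \emph{quadratic} subtraction $-d_2\int_Q\bigl(|\nabla\phi|^2+|\psi|^2\bigr)$ that you then try to absorb by Young's inequality $|z|^2\leq\delta|z|^p+C(\delta)$; as you note yourself, this needs $p,q>2$ strictly, so the borderline cases $p=2$ or $q=2$ (both admitted by the hypotheses) fall through. The paper does not use Lemma~\ref{growths}~(c) here. It instead goes back to the raw coercivity $(H_2)$ on $e(\baf+\nabla\phi,\bai+\psi)$, bounds $e(\baf,\bai)$ and the linear terms $e_F(\baf,\bai)\cdot\nabla\phi$, $e_\eta(\baf,\bai)\psi$ using the uniform bound on $(\baf,\bai)\in\mathcal{U}_K$ plus Young's inequality (absorbing \emph{linear} terms into the $p$- and $q$-powers, which works for any $p,q>1$), and ends up with
\begin{equation*}
e(\baf+\nabla\phi,\bai+\psi\,|\,\baf,\bai)\geq -C + 2^{-p}c\,|\nabla\phi|^p + 2^{-q}c\,|\psi|^q ,
\end{equation*}
pointwise — an additive constant penalty only, no quadratic one. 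That constant is then absorbed via $\|\phi\|_{L^p(Q)}\geq\epsilon$ and Poincar\'e, with no restriction on $p,q\geq 2$.

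Concretely: the fix you sketch for the borderline case (invoking the $\lambda_2$-convexity plus Proposition~\ref{prop:Daf} and Lemma~\ref{hsnfx}) is hand-wavy and would not reduce to a "cosmetic modification" — those results bound the quadratic form $\tilde L[\cdot,\cdot]$, not $\int_Q e(\cdot|\cdot)$, and in the large-$\|\phi\|_{L^p}$ regime there is no small-amplitude structure to exploit. The cleaner move is simply to replace Lemma~\ref{growths}~(c) by the direct $(H_2)$-based estimate above, after which your argument goes through verbatim for all $p\geq q\geq 2$ and matches the paper's.
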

	\begin{proof}
		We claim that for all $\varepsilon>0$ and all $(\baf,\bai)\in \mathcal{U}_K$, $\psi\in L^q(Q)$ zero-average and $\phi\in W^{1,p}_0(Q)$ with $\|\phi\|_{L^{p}(Q)}\geq \varepsilon$ it holds that 
		\begin{align*}
			\int_Q\Big(|V_p(\nabla\phi)|^2+|V_q(\psi)|^2\Big)
			\leq C_0(\epsilon)\int_Q e(\baf+\nabla\phi,\bai+\psi|\baf,\bai)+  C_1(\epsilon)\int_Q|V_p(\phi)|^2,
		\end{align*}
		where $C_0$ and $C_1$ also depend on $\epsilon$.  By the assumed coercivity of $e$, its smoothness and the fact that $(\baf,\bai)\in \mathcal{U}_K$, we use Young's inequality to estimate
		\begin{align*}
			e(\baf+\nabla\phi,\bai+\psi|\baf,\bai) &\geq c\left(-1 + |\baf+\nabla\phi|^p+|\bai+\psi|^q\right)\\
			&- C(\delta)|e_1(\baf,\bai)|^{p'} - \delta |\nabla\phi|^p- C(\delta)|e_2(\baf,\bai)|^{q'} - \delta |\psi|^q \\
			&\geq -C(\delta)+(2^{1-p}c-\delta)|\nabla\phi|^p+(2^{1-q}c-\delta)|\psi|^q.
		\end{align*}
		Since $p\geq q$, choose $\delta=2^{-p}c$ to find that 
		\begin{align}\label{eq:1.1}
			e(\baf+\nabla\phi,\bai+\psi|\baf,\bai)\geq -C+2^{-p}c|\nabla\phi|^p+2^{-q}c|\psi|^q.
		\end{align}
		Note that since $\|\phi\|_{L^{p}(Q)}\geq \eps$, it follows that
		\[
		C \leq \frac{C}{\eps^p}\|\phi\|_{L^{p}(Q)}^p,
		\]
		so that, integrating \eqref{eq:1.1} over $Q$ with $|Q|=1$, we infer that
		\begin{equation}\label{eq:1.2}
			2^{-p}c\int_Q |\nabla\phi|^p + 2^{-q}c\int_Q|\psi|^q \leq \int_Q e(\baf+\nabla\phi,\bai+\psi|\baf,\bai) + \frac{C}{\eps^p}\int_Q|\phi|^p.
		\end{equation}
		But since $1 \leq \frac{1}{\eps^p}\|\phi\|_{L^{p}(Q)}^p\lesssim \frac{1}{\eps^p}\|\nabla\phi\|_{L^{p}(Q)}^p$ we have that
		\begin{align*}
			\int_Q|V_p(\nabla\phi)|^2=\|\nabla\phi\|_{L^{2}(Q)}^2+\|\nabla\phi\|_{L^{p}(Q)}^p&\leq 1+2\|\nabla\phi\|_{L^{p}(Q)}^p\lesssim \left( \frac{1}{\eps^p}+2\right)\|\nabla\phi\|_{L^{p}(Q)}^p,
		\end{align*}
		and similarly 
		\begin{align*}
			\int_Q|V_q(\psi)|^2\lesssim \frac{1}{\eps^p}\|\nabla\phi\|_{L^{p}(Q)}^p+\|\psi\|_{L^q(Q)}^q.
		\end{align*}
		So combining the above two results,
		\begin{align*}
			\int_Q|V_p(\nabla\phi)|^2+\int_Q|V_q(\psi)|^2&\lesssim \left( \frac{1}{\eps^p}+1\right)\|\nabla\phi\|_{L^{p}(Q)}^p+\|\psi\|_{L^q(Q)}^q\\
			&\leq C(\eps)\left( \|\nabla\phi\|_{L^{p}(Q)}^p+\|\psi\|_{L^q(Q)}^q\right).
		\end{align*}
		Together with \eqref{eq:1.2} we infer that
		\begin{align*}
			C^{-1}(\eps) \int_Q|V_p(\nabla\phi)|^2+|V_q(\psi)|^2 \leq\int_Q e(\baf+\nabla\phi,\bai+\psi|\baf,\bai) + \frac{C}{\eps^p}\int_Q|V_p(\phi)|^2,
		\end{align*}
		which is the desired inequality. However, Theorem \ref{thm:main} says that there exists $\eps_0>0$ such that whenever $\|\phi\|_{L^p(Q)}\leq\eps_0$ it holds that 
		\begin{align*}
			\int_Q|V_p(\nabla\phi)|^2+|V_q(\psi)|^2 \leq\tilde C_0\int_Q e(\baf+\nabla\phi,\bai+\psi|\baf,\bai) + \tilde C_1\int_Q|V_p(\phi)|^2.
		\end{align*}
		Choosing $\eps=\eps_0$ we conclude the proof.
	\end{proof}

	\newpage
	\section*{Appendix A. Proof of Lemma \ref{growths}}
	
	\begin{proof}
		We define the associated Hessian $L_f$ as follows
		\begin{align*}
			L_f(\lambda_1,\lambda_2)[(\xi_1,\xi_2),(\xi_1,\xi_2)]:=&f_{FF}(\lambda_1,\lambda_2)\,\xi_1:\xi_1+f_{\eta\eta}(\lambda_1,\lambda_2)\,\xi_2\,\xi_2\\
			&+ 2f_{F\eta}(\lambda_1,\lambda_2)\,\xi_1\,\xi_2.
		\end{align*}
		In the sequel we write simply $L$ instead of $L_f$.
		
		\vspace{0.2cm}
		For (a), note that if $|\xi_1|+|\xi_2|+|z_1|+|z_2|\leq 1$, it holds that
		\begin{align*}
			&\begin{aligned}
				f(\lambda_1+\xi_1,\lambda_2+\xi_2|\lambda_1,\lambda_2) - f(\lambda_1+z_1,\lambda_2+z_2|\lambda_1,\lambda_2)
			\end{aligned}\\
			&\begin{aligned}
				= \int_0^1(1-t)&\Big(\,L(\lambda_1+t\xi_1,\lambda_2+t\xi_2)[\,(\xi_1,\xi_2),(\xi_1,\xi_2)\,]\\
				&-L(\lambda_1+tz_1,\lambda_2+tz_2)[\,(z_1,z_2),(z_1,z_2)\,]\,\Big)dt
			\end{aligned}\\
			&\begin{aligned}
				=\int_0^1(1-t)&\Big(\,L(\lambda_1+t\xi_1,\lambda_2+t\xi_2)[\,(\xi_1,\xi_2),(\xi_1,\xi_2)\,]\\
				&-L(\lambda_1+t\xi_1,\lambda_2+t\xi_2)[\,(\xi_1,\xi_2),(z_1,z_2)\,]\Big)dt
			\end{aligned}\\
			&\begin{aligned}
				+\int_0^1(1-t)&\Big(\,L(\lambda_1+t\xi_1,\lambda_2+t\xi_2)[\,(\xi_1,\xi_2),(z_1,z_2)\,]\\
				&-L(\lambda_1+t\xi_1,\lambda_2+t\xi_2)[\,(z_1,z_2),(z_1,z_2)\Big)dt
			\end{aligned}\\
			&\begin{aligned}
				+\int_0^1(1-t)&\Big(\,L(\lambda_1+t\xi_1,\lambda_2+t\xi_2)[\,(z_1,z_2),(z_1,z_2)\,]\\
				&-L(\lambda_1+tz_1,\lambda_2+tz_2)[\,(z_1,z_2),(z_1,z_2)\,]\,\Big)dt
			\end{aligned}\\
			&\begin{aligned}
				=:\int_0^1(1-t)(I_1+I_2+I_3)dt.
			\end{aligned}
		\end{align*}
		Concerning the term $I$ we observe that
		\begin{align*}
			&I_1\leq|I_1|\leq|f_{FF}(\lambda_1+t\xi_1,\lambda_2+t\xi_2)||\xi_1||\xi_1-z_1|\\
			& +|f_{\eta\eta}(\lambda_1+t\xi_1,\lambda_2+t\xi_2)|\xi_2||\xi_2-z_2|
			+2|f_{F\eta}(\lambda_1+t\xi_1,\lambda_2+t\xi_2)||\xi_1\xi_2-z_1z_2|,
		\end{align*}
		and since $\xi_1\xi_2-z_1z_2=\xi_1\xi_2-z_1\xi_2+z_1\xi_2-z_1z_2$ we infer that
		\begin{align*}
			I_1&\leq C\big(|\xi_1||\xi_1-z_1|+|\xi_2||\xi_1-z_1|+|z_1||\xi_2-z_2|+|\xi_2||\xi_2-z_2|\big)\\
			&=C\big(|\xi_1|+|\xi_2|\big)|\xi_1-z_1|+C\big(|\xi_1|+|\xi_2|\big)|\xi_2-z_2|.
		\end{align*}
		Similarly for $I_2$ we find that 
		\begin{align*}
			I_2&\leq C\big(|\xi_1||\xi_1-z_1|+|\xi_2||\xi_1-z_1|+|z_1||\xi_2-z_2|+|\xi_2||\xi_2-z_2|\big)\\
			&=C\big(|z_1|+|z_2|\big)|\xi_1-z_1|+C\big(|\xi_1|+|z_2|\big)|\xi_2-z_2|.
		\end{align*}
		Considering the third term since for $|z_1|+|z_2|\leq 1$ we have that $\big(|z_1|+|z_2|\big)^2\leq |z_1|+|z_2|$ it holds that
		\begin{align*}
			I_3\leq |I_3|&\leq C\big(|\xi_1-z_1|+|\xi_2-z_2|\big)\big(|z_1|+|z_2|\big)^2\\
			&\leq C\big(|\xi_1-z_1|+|\xi_2-z_2|\big)\big(|z_1|+|z_2|\big).
		\end{align*}
		Combining the above we deduce that
		\begin{align*}
			&|f(\lambda_1+\xi_1,\lambda_2+\xi_2|\lambda_1,\lambda_2) - f(\lambda_1+z_1,\lambda_2+z_2|\lambda_1,\lambda_2)|\\
			&\lesssim \big(|\xi_1|+|\xi_2|+|z_1|+|z_2|\big)|\xi_1-z_1|+\big(|\xi_1|+|\xi_2|+|z_1|+|z_2|\big)|\xi_2-z_2|.
		\end{align*}
		Now, for $|\xi_1|+|\xi_2|+|z_1|+|z_2|> 1$ 
		\begin{align*}
			&|f(\lambda_1+\xi_1,\lambda_2+\xi_2|\lambda_1,\lambda_2) - f(\lambda_1+z_1,\lambda_2+z_2|\lambda_1,\lambda_2)|\\
			&=|f(\lambda_1+\xi_1,\lambda_2+\xi_2) - f(\lambda_1+z_1,\lambda_2+z_2)+f_F(\lambda_1,\lambda_2)(\xi_1-z_1)\\
			&+f_\eta(\lambda_1,\lambda_2)(\xi_2-z_2)|\\
			&\leq|f(\lambda_1+\xi_1,\lambda_2+\xi_2) - f(\lambda_1+z_1,\lambda_2+\xi_2)|\\
			&+|f(\lambda_1+z_1,\lambda_2+\xi_2) - f(\lambda_1+z_1,\lambda_2+z_2)|\\
			&+C|\xi_1-z_1|+C|\xi_2-z_2|=:J_1+J_2+J_3
		\end{align*}
		So, from the growth assumptions on $f$ and on its partial derivatives we infer that 
		\begin{align*}
			J_1&\leq \int_0^1|f_F\big(\lambda_1+z_1+t(\xi_1-z_1),\lambda_2+\xi_2\big)|dt\cdot|\xi_1-z_1|\\
			&\lesssim (1+|z_1|^{p-1}+|\xi_1|^{p-1}+|\xi_2|^{q\frac{p-1}{p}})|\xi_1-z_1|\\
			&\lesssim (|\xi_1|+|\xi_2|+|z_1|+|z_2|+|z_1|^{p-1}+|\xi_1|^{p-1}+|\xi_2|^{q\frac{p-1}{p}})|\xi_1-z_1|.
		\end{align*}
		Similarly for the second term 
		\begin{align*}
			J_2\lesssim \big(|\xi_1|+|\xi_2|+|z_1|+|z_2|+|\xi_2|^{q-1}+|z_2|^{q-1}+|z_1|^{p\frac{q-1}{q}}\big)|\xi_2-z_2|.
		\end{align*}
		For $J_3$, since $|\xi_1|+|\xi_2|+|z_1|+|z_2|> 1$ we have that
		\begin{align*}
			J_3\lesssim \big(|\xi_1|+|\xi_2|+|z_1|+|z_2|\big)\,|\xi_1-z_1|+\big(|\xi_1|+|\xi_2|+|z_1|+|z_2|\big)\,|\xi_2-z_2|,
		\end{align*}
		and together with the first two terms we deduce that 
		\begin{align*}
			&|f(\lambda_1+\xi_1,\lambda_2+\xi_2|\lambda_1,\lambda_2) - f(\lambda_1+z_1,\lambda_2+z_2|\lambda_1,\lambda_2)| \\
			&\leq C (|\xi_1|+|\xi_2|+|z_1| + |z_2| +|\xi_1|^{p-1}+ |z_1|^{p-1}+|\xi_2|^{q\frac{p-1}{p}})|\xi_1-z_1|\\
			&+C(|\xi_1|+|\xi_2|+|z_1| + |z_2| +|\xi_2|^{q-1}+ |z_2|^{q-1}+|z_1|^{p\frac{q-1}{q}})|\xi_2-z_2|.
		\end{align*}
		For the second part we just set $(z_1,z_2)=(0,0)$ in the above inequality and apply Young's inequality to conclude the proof of (a).
		\par 
		Concerning (b) we see that for $|\xi_1|+|\xi_2|\leq 1$ and $\xi=(\xi_1,\xi_2)$, it holds
		
		\begin{align*}
			&|f(\lambda_1+\xi_1,\lambda_2+\xi_2|\lambda_1,\lambda_2) - f(\mu_1+\xi_1,\mu_2+\xi_2|\mu_1,\mu_2)|\\
			&\leq \int_0^1\Big|\,L(\lambda_1+t\xi_1,\lambda_2+t\xi_2)[\xi,\xi]-L(\mu_1+t\xi_1,\mu_2+t\xi_2)[\xi,\xi]\,\Big|dt\\
			&\leq C\big(|\lambda_1-\mu_1|+|\lambda_2-\mu_2|\big)\big(|\xi_1|^2+|\xi_2|^2\big).
		\end{align*}
		When $|\xi_1|+|\xi_2|> 1$, from the growth of $Df=(f_F,f_\eta)$, we have that
		\begin{align*}
			&|f(\lambda_1+\xi_1,\lambda_2+\xi_2|\lambda_1,\lambda_2) - f(\mu_1+\xi_1,\mu_2+\xi_2|\mu_1,\mu_2)|\\
			&\leq\big|f(\lambda_1+\xi_1,\lambda_2+\xi_2)-f(\mu_1+\xi_1,\mu_2+\xi_2)\big|+\big|f(\lambda_1,\lambda_2)-f(\mu_1,\mu_2)\big|\\
			&+\big|Df(\lambda_1,\lambda_2)-Df(\mu_1,\mu_2)\big|\cdot|\xi|\\
			&\leq\big|f(\lambda_1+\xi_1,\lambda_2+\xi_2)-f(\mu_1+\xi_1,\lambda_2+\xi_2)\big|\\
			&+\big|f(\mu_1+\xi_1,\lambda_2+\xi_2)-f(\mu_1+\xi_1,\mu_2+\xi_2)\big|+\big(|\xi_1|+|\xi_2|\big)\big(|\lambda_1-\mu_1|+|\lambda_2-\mu_2|\big)\\
			&\lesssim \int_0^1|f_F\big(\xi_1+\mu_1+t(\lambda_1-\mu_1),\lambda_2+\xi_2\big)|dt\cdot|\lambda_1-\mu_1|\\
			&+\int_0^1|f_\eta\big(\mu_1+\xi_1,\xi_2+\mu_2+t(\lambda_2-\mu_2)\big)|dt\cdot|\lambda_2-\mu_2|\\
			&+\big(|\xi_1|+|\xi_2|\big)\big(|\lambda_1-\mu_1|+|\lambda_2-\mu_2|\big)\\
			&\lesssim \left(1+|\xi_1|^{p-1}+|\xi_2|^{q\frac{p-1}{p}}\right)|\lambda_1-\mu_1|+\left(1+|\xi_1|^{p\frac{q-1}{q}}+|\xi_2|^{q-1}\right)|\lambda_2-\mu_2|\\
			&+\big(|\xi_1|+|\xi_2|\big)\big(|\lambda_1-\mu_1|+|\lambda_2-\mu_2|\big).
		\end{align*}
		But since $|\xi_1|+|\xi_2|> 1$ we have that   $|\xi_1|^{p-1}+|\xi_2|^{q\frac{p-1}{p}},\,|\xi_1|^{p\frac{q-1}{q}}+|\xi_2|^{q-1}\leq 2+|\xi_1|^{p}+|\xi_2|^{q}\lesssim |\xi_1|+|\xi_2|+|\xi_1|^{p}+|\xi_2|^{q}\lesssim |\xi_1|^2+|\xi_2|^2+|\xi_1|^{p}+|\xi_2|^{q}$. Combining both cases we infer that
		\begin{align*}
			|f(\lambda_1+\xi_1,\lambda_2+\xi_2|\lambda_1,\lambda_2)& - f(\mu_1+\xi_1,\mu_2+\xi_2|\mu_1,\mu_2)|\\
			&\leq C \big(|\lambda_1-\mu_1|+|\lambda_2-\mu_2|\big)\big(|\xi_1|^2+|\xi_2|^2+|\xi_1|^{p}+|\xi_2|^{q}\big),
		\end{align*}
		and by choosing $R<\delta/C$ we conclude the proof of part (b).
		\par 
		Regarding the proof of (c), again for $|\xi_1|+|\xi_2|\leq 1$, since $p,q\geq 2$
		\begin{align*}
			f(\lambda_1+\xi_1,\lambda_2+\xi_2|\lambda_1,&\lambda_2)=\int_0^1(1-t)L(\lambda_1+t\xi_1,\lambda_2+t\xi_2)[\xi,\xi]dt\geq -\tilde d_2|\xi|^2\\
			&\geq \tilde d_1\big[(|\xi_1|^p-|\xi_1|^2)+(|\xi_2|^q-|\xi_2|^2)\big]-\tilde d_2(|\xi_1|^2+|\xi_2|^2).
		\end{align*}
		For the case $|\xi_1|+|\xi_2|> 1$, from the coercivity of $f$ we have that
		\begin{align*}
			&f(\lambda_1+\xi_1,\lambda_2+\xi_2|\lambda_1,\lambda_2)=f(\lambda_1+\xi_1,\lambda_2+\xi_2)-f(\lambda_1,\lambda_2)-f_F(\lambda_1,\lambda_2)\xi_1\\
			&-f_\eta(\lambda_1,\lambda_2)\xi_2\\
			&\geq \tilde d_3\big(-1+|\xi_1|^p+|\xi_2|^q\big)-\tilde d_4\big(|\xi_1|+|\xi_2|\big)\\
			&\geq \tilde d_3\big(|\xi_1|^p+|\xi_2|^q\big)-\tilde d_4\big(|\xi_1|^2+|\xi_2|^2\big).
		\end{align*}
		So, combining the two cases, we may choose $d_1,d_2>0$ to conclude the proof of the lemma.
	\end{proof}
	
	\section*{Appendix B. Symmetrisable Systems of Conservation Laws}
	
	Systems of conservation laws describing the evolution of  a function $U :  \mathbb{R}^+ \times \mathbb{R}^d  \to \mathbb{R}^n$ 
	have the form
	\begin{align}
		\label{syscl}
		\partial_t A(U) +\partial_{\alpha}f_{\alpha}(U)=0
	\end{align}
	where $A$ and $ f_\alpha : \mathcal{O} \subset  \mathbb{R}^n \to \mathbb{R}^n$, $\alpha =1, \dots, d$,  are smooth functions 
	describing fluxes. Here the matrix $A(U)$ is globally  invertible on the domain of definition $\mathcal{O} \ni U$ and $\nabla A(U)$ is 
	nonsingular.  System \eqref{syscl} is endowed with an entropy - entropy flux pair $\eta, q{_\alpha} :  \mathbb{R}^n \to \mathbb{R}$ if any 
	smooth solution $U(t,x) \in C^1(\mathbb{R}^n)$ of \eqref{syscl} satisfies the additional conservation law of entropy
	\begin{align}
		\label{entrpair}
		\partial_t \eta(U) +\partial_{\alpha}q_{\alpha}(U) = 0 \, .
	\end{align}
	This is equivalent to the existence of a multiplier $G : \mathbb{R}^n \to \mathbb{R}^n$ which is a smooth function of the 
	solution $G = G(U)$ satisfying the relations
	\begin{equation}
		\label{entropyG}
		\begin{aligned}
			G \cdot \nabla A &= \nabla \eta 
			\\
			G \cdot \nabla f_\alpha  &= \nabla q_\alpha
		\end{aligned}
	\end{equation}
	or equivalently the relations
	\begin{equation}
		\label{entropystruc}
		\begin{aligned}
			\nabla G^T  \nabla A  &= \nabla A^T \nabla G
			\\
			\nabla G^T  \nabla f_\alpha  &= {\nabla f_\alpha}^T \nabla G\,.
		\end{aligned}
	\end{equation}
	In particular, whenever \eqref{entropyG} is satisfied by a smooth solution of \eqref{syscl}, then it also satisfies the entropy
	production \eqref{entrpair}.
	
	Suppose now that \eqref{syscl} is endowed with a smooth entropy pair $\eta - q_{\alpha}$, that is for some multiplier $G(U)$ 
	relations \eqref{entropystruc} are satisfied. We can rewrite \eqref{syscl} for smooth solutions, in the form of an equivalent system with
	symmetric coefficients:
	\begin{equation}
		(\nabla G^T  \nabla A ) \partial_t U + ( \nabla G^T  \nabla f_\alpha ) \partial_\alpha U = 0.
	\end{equation}
	The hypothesis
	$$
	\nabla G^T  \nabla A  > 0
	$$
	guarantees that the system \eqref{syscl} is symmetrisable in the sense of Friedrichs and Lax~\cite{MR285799}, it has real eigenvalues and it is hyperbolic. Moreover, it induces a relative 
	entropy identity and therefore a notion of stability for the system \eqref{syscl}, see \cite{MR285799,christoforou2016relative}. 
	Using \eqref{entrpair},  it can be equivalently expressed in the form
	\begin{equation}
		\label{symm}
		\nabla^2 \eta - \sum_{k=1} G^k  \nabla^2 A^k   > 0 \, .
	\end{equation}
	For weak solutions the entropy pair $\eta - q_{\alpha}$ gives rise to a notion of admissibility. The function $U \in L^1_{loc}(\mathbb{R}^n)$ is an entropy weak solution if it satisfies, in the sense of distributions, \eqref{syscl} and the entropy inequality
	\begin{align}
		\label{entrsoln}
		\partial_t \eta(U) +\partial_{\alpha}q_{\alpha}(U) \le 0.
	\end{align}
	
	Adiabatic thermoelasticity \eqref{e:adiabatic_thermoelasticity} fits into the general form of system \eqref{syscl},  by setting
	$$
	U = (F, v, \eta) \quad A(U) = \big ( F, v, \tfrac{1}{2} |v|^2 + e (F, \eta) \big ).
	$$
	The positivity condition $ \theta = \partial  e/\partial  \eta > 0$ for the temperature guarantees that $A(U)$ is invertible and 
	$\nabla A(U)$ is nonsingular. By construction of the theory, there is a multiplier $G(U)$ that leads to the entropy pair $\check \eta (U)$~-~$\check q_\alpha (U)$ with
	$$
	\begin{aligned}
		\check \eta (U) :=   - \eta,  \quad  \check q_\alpha (U) := 0, \quad
		G(U) = \tfrac{1}{\theta(F,\eta)} \bigg ( \frac{\partial  e}{\partial  F}(F,\eta)  , v , -1 \bigg ) \, .
	\end{aligned}
	$$
	We may compute that 
	\begin{equation}
		\label{hyperbolic(F,eta)}
		\nabla^2 \check \eta (U)  - \sum_{k=1} G^k(U)  \nabla^2 A^k(U)
		=
		\frac{1}{e_{\eta}}
		\begin{pmatrix}
			e_{FF} & 0 & e_{F \eta}
			\\
			0 &  1 & 0
			\\
			e_{F \eta} & 0 & e_{\eta \eta}
		\end{pmatrix}
	\end{equation}
	and thus the condition of symmetrisability \eqref{symm} amounts to $e(F, \eta)$ strongly convex and $\theta(F,\eta)=\frac{\partial  e (F,\eta)}{\partial  \eta} > 0$.
	Convexity of $e(F,\eta)$ suffices to apply the standard 
	theory of conservation laws to \eqref{e:adiabatic_thermoelasticity} and, in that case, the entropy admissibility inequality \eqref{entrsoln} amounts to the growth of the physical entropy.

		\section*{Acknowledgments}
		KK and AV acknowledge the support of the Dr Perry James (Jim) Browne Research Centre   
		on Mathematics and its Applications of the University of Sussex.
		The article was partially written while MG was a PhD student at King Abdullah University of Science and Technology (KAUST), Saudi Arabia.

	\newpage


\begin{thebibliography}{10}
	
	

\bibitem{CN1963}
B.~D. Coleman and W.~Noll.
\newblock The thermodynamics of elastic materials with heat conduction and
  viscosity.
\newblock {\em Arch. Rational Mech. Anal.}, 13:167--178, 1963.

\bibitem{TN1992}
C.~Truesdell and W.~Noll.
\newblock {\em The non-linear field theories of mechanics}.
\newblock Springer-Verlag, Berlin, third edition, 2004.
\newblock Edited and with a preface by Stuart S. Antman.

\bibitem{MR3468916}
C.~M. Dafermos.
\newblock {\em Hyperbolic conservation laws in continuum physics}, volume 325
  of {\em Grundlehren der Mathematischen Wissenschaften [Fundamental Principles
  of Mathematical Sciences]}.
\newblock Springer-Verlag, Berlin, fourth edition, 2016.

\bibitem{MR285799}
K.~O. Friedrichs and P.~D. Lax.
\newblock Systems of conservation equations with a convex extension.
\newblock {\em Proc. Nat. Acad. Sci. U.S.A.}, 68:1686--1688, 1971.

\bibitem{CGT2017}
C.~Christoforou, M.~Galanopoulou, and A.~E. Tzavaras.
\newblock A symmetrizable extension of polyconvex thermoelasticity and
  applications to zero-viscosity limits and weak-strong uniqueness.
\newblock {\em Comm. Partial Differential Equations}, 43(7):1019--1050, 2018.

\bibitem{CGT2018}
C.~Christoforou, M.~Galanopoulou, and A.~E. Tzavaras.
\newblock Measure-valued solutions for the equations of polyconvex adiabatic
  thermoelasticity.
\newblock {\em Discrete Contin. Dyn. Syst.}, 39(11):6175--6206, 2019.

\bibitem{CGT2020}
C.~Christoforou, M.~Galanopoulou, and A.~E. Tzavaras.
\newblock A discrete variational scheme for isentropic processes in polyconvex
  thermoelasticity.
\newblock {\em Discrete Contin. Dyn. Syst.}, 59(122), 2020.

\bibitem{Daf}
C.~M. Dafermos.
\newblock Quasilinear hyperbolic systems with involutions.
\newblock {\em Archive for Rational Mechanics and Analysis}, 94(4):373--389,
  1986.

\bibitem{KS2017}
K.~Koumatos and S.~Spirito.
\newblock Quasiconvex elastodynamics: Weak-strong uniqueness for measure-valued
  solutions.
\newblock {\em Communications on Pure and Applied Mathematics}, 07 2017.

\bibitem{KV2020}
K.~Koumatos and A.~Vikelis.
\newblock $\mathcal{A}$-quasiconvexity, g\aa rding inequalities, and
  applications in pde constrained problems in dynamics and statics.
\newblock {\em SIAM Journal on Mathematical Analysis}, 53(4):4178--4211, 2021.

\bibitem{Acerbi1987}
E.~Acerbi and N.~Fusco
\newblock A regularity theorem for minimizers of quasiconvex integrals.
\newblock {\em Arch. Rational Mech. Anal.}, 99(3):261–281, 1987.	

\bibitem{Zhang1992}
K.~Zhang
\newblock  Remarks  on  quasiconvexity  and  stability  of  equilibria  for  variational  integrals.
\newblock {\em Proceedings of the American Mathematical Society}, 114(4):927–930, 1992.

\bibitem{FM1999}
I.~Fonseca and S.~M{\"u}ller.
\newblock $\mathcal{A}$-quasiconvexity, {Lower Semicontinuity, and Young
  Measures}.
\newblock {\em SIAM Journal on Mathematical Analysis}, 30(6):1355--1390, 1999.

\bibitem{dm87}
R.~J. DiPerna and A.~J. Majda.
\newblock Oscillations and concentrations in weak solutions of the
  incompressible fluid equations.
\newblock {\em Comm. Math. Phys.}, 108(4):667--689, 1987.

\bibitem{ab97}
J.~J. Alibert and G.~Bouchitt\'{e}.
\newblock Non-uniform integrability and generalized {Y}oung measures.
\newblock {\em J. Convex Anal.}, 4(1):129--147, 1997.

\bibitem{GSW2015}
P.~Gwiazda, A.~\'{S}wierczewska Gwiazda, and E.~Wiedemann.
\newblock Weak-strong uniqueness for measure-valued solutions of some
  compressible fluid models.
\newblock {\em Nonlinearity}, 28(11):3873--3890, 2015.

\bibitem{JC2017}
J.~C. Cordero.
\newblock Boundary regularity and sufficient conditions for strong local
  minimizers.
\newblock {\em Journal of Functional Analysis}, 272(11):4513--4587, 2017.

\bibitem{JCK2020}
J.~C. Cordero and K.~Koumatos.
\newblock Necessary and sufficient conditions for the strong local minimality
  of c1 extremals on a class of non-smooth domains.
\newblock {\em ESAIM: Control, Optimisation and Calculus of Variations}, 26:49,
  2020.

\bibitem{GB2009}
Y.~Grabovsky and T.~Mengesha.
\newblock Sufficient conditions for strong local minima: The case of ${C}^1$
  extremals.
\newblock {\em Transactions of the American Mathematical Society},
  361(3):1495--1541, 2009.

\bibitem{FLb2007}
I.~Fonseca and G.~Leoni.
\newblock {\em Modern Methods in the Calculus of Variations: $L^{p}$ Spaces}.
\newblock Springer Science \& Business Media, 2007.

\bibitem{christoforou2016relative}
C.~Christoforou and A.~E. Tzavaras.
\newblock Relative entropy for hyperbolic-parabolic systems and application to
  the constitutive theory of thermoviscoelasticity.
\newblock {\em Arch. Ration. Mech. Anal.}, 229(1):1--52, 2018.

\end{thebibliography}
\end{document}